\def\p{\partial}
\def\R{\mathbb{R}}
\def\C{\mathbb{C}}
\def\Z{\mathbb{Z}}
\def\l{\lambda}
\def\i{\sqrt{-1}}
\def\t{\triangle}
\def\cD{\mathcal D}
\def\cF{{\mathcal F}}
\def\cH{{\mathcal H}}
\def\cO{{\mathcal O}}
\def\cW{{\mathcal W}}
\numberwithin{equation}{section}
\newtheorem{prop}{Proposition}[section]
\newtheorem{theo}[prop]{Theorem}
\newtheorem{lemma}[prop]{Lemma}
\newtheorem{rmk}[prop]{Remark}
\newtheorem{defi}[prop]{Definition}
\newtheorem{pr}[prop]{Problem}
\newtheorem{claim}[prop]{Claim}
\begin{document}
\title[Sasakian geometry and the Sasaki-Ricci flow]{The Sasaki-Ricci flow and Compact Sasakian manifolds 
of positive  transverse holomorphic bisectional curvature}

\author{Weiyong He}

\address{Department of Mathematics, University of Oregon, Eugene, Oregon, 97403}
\email{whe@uoregon.edu}
\thanks{The author is partially supported by a NSF grant.}
\date{}
\maketitle

\section{Introduction}

Sasakian geometry is the odd dimensional cousin of the K\"ahler geometry. 
Perhaps the most straightforward definition is the following: a Riemannian manifold $(M, g)$ is Sasakian if and only if its metric cone $(C(M)=\R^{+}\times M, \bar g=dr^2+r^2 g)$ is K\"ahler. A Sasakian-Einstein manifold is a Riemannian manifold that is both Sasakian and Einstein. There has been renewed extensive interest recently on Sasakian geometry, especially on Sasakian-Einstein manifolds. Readers are referred to recent monograph Boyer-Galicki \cite{BG}, and recent survey paper Sparks \cite{Sparks} and the references in for history, background and recent progress of Sasakian geometry and Sasaki-Einstein manifolds. 

The {\it Sasaki-Ricci flow} is introduced by Smoczyk-Wang-Zhang  \cite{SWZ} to study the existence of Sasaki-Einstein metrics, more precisely, the {\it $\eta$-Einstein metrics} on Sasakian manifolds. On a Sasakian manifold, there is a  one-dimensional foliation structure $\cF_\xi$ determined by the {\it Reeb vector filed} $\xi$. The transverse structure of the foliation is a {\it transverse K\"ahler structure}. And the Sasaki-Ricci flow is just a {\it transverse K\"ahler-Ricci flow} which deforms the transverse K\"ahler structure along its (negative) transverse Ricci curvature on Sasakian manifolds. In particular, short-time and long time existence of the Sasaki-Ricci flow are proved  and convergence to an $\eta$-Einstein metric is also established when the {\it basic first  Chern class} is negative or null \cite{SWZ}, which can be viewed as an odd-dimensional counterpart of Cao's result \cite{Cao} for the K\"ahler-Ricci flow. 

For Hamilton's Ricci flow, Perelman \cite{Perelman01} introduced a functional, called the $\cW$ functional, which is monotone along the Ricci flow and has tremendous applications in  the Ricci flow. As an application to K\"ahler geometry, Perelman proved  deep results  in K\"ahler-Ricci flow when the first Chern class is positive. For example, he proved that the scalar curvature and the diameter are both uniformly bounded; readers are referred to Sesum-Tian \cite{Sesum-Tian} for details. Perelman's results strengthen the belief of Hamilton-Tian conjecture which states that the K\"ahler-Ricci flow would converge  in some fashion when the first Chern class is positive.

In this paper we will study the Sasaki-Ricci flow when the first basic Chern class is positive. First of all, we introduce an analogue of Perelman's $\cW$ functional on Sasakian manifolds, which is monotone along the Sasaki-Ricci flow. Then as in the K\"ahler setting \cite{Sesum-Tian}, we prove that the {\it transverse scalar curvature} and the diameter are both uniformly bounded along the Sasaki-Ricci flow. The frame work of the proof is very similar as in the K\"ahler setting \cite{Sesum-Tian}. But there is a major difference for $\cW$ functional on Sasakian manifolds and K\"ahler manifolds, namely, the $\cW$ functional on a Sasakian manifold is only involved with {\it basic functions}. While the distance function, for example, is not a basic function. To overcome this difficulty, we introduce a {\it transverse distance} on Sasakian manifolds, or more precisely, on {\it quasi-regular} Sasakian manifolds and we will see that  the Sasakian structure plays a central role. Nevertheless, this allows us to prove the results  along the Sasaki-Ricci flow  on quasi-regular Sasakian manifolds, such as the uniform bound of transverse scalar curvature and the diameter along the flow. For {\it irregular Sasakian structures}, one can  approximate the irregular structure by quasi-regular structures, using  Rukimbira's results \cite{Rukimbira95a}. Using the estimates in \cite{SWZ}, the corresponding Sasaki-Ricci flow for irregular structure can  be approximated by the Sasaki-Ricci flow for quasi-regular structure. One observation is that the transverse geometric quantities are uniformly bounded under the approximation, such as the transverse scalar curvature.   With the help of this approximation, one can show the transverse scalar curvature is  uniformly bounded along the Sasaki-Ricci flow for irregular structure. To show the diameter is uniformly bounded, we need the fact that on any compact Sasakian manifold (or more generally, on compact K-contact manifold), there always exists closed orbits of the Reeb vector field (\cite{Banyaga90}, see \cite{Rukimbira95, Rukimbira99} further development also).  Our results give some evidence  that the Sasaki-Ricci flow would converge in a suitable sense to a Sasaki-Ricci solition \cite{SWZ}.  As a direct consequence, we can prove that the Sasaki-Ricci flow exists for all positive time and converges to a {\it Sasaki-Ricci soliton} when the dimension is three. This result  was announced in \cite{WZ}. 

The Frankel conjecture states that a compact K\"ahler manifold of complex dimension $n$ with positive bisectional curvature is biholomorphic to the complex projective space $\mathbb{CP}^n$. The Frankel conjecture was proved  by  Siu-Yau \cite{Siu-Yau} using harmonic maps and Mori \cite{Mori} via algebraic methods. There have been extensive study of K\"ahler manifolds with positive (or nonnegative) bisectional curvature  using the K\"ahler-Ricci flow, for example to mention \cite{Bando, Mok, Chen-Tian, CCZ, PSSW} to name a few.  Since the positivity of bisectional curvature is preserved \cite{Bando, Mok},   the K\"ahler-Ricci flow will converge to a K\"ahler-Ricci soliton if the initial metric has nonnegative bisectional curvature \cite{CCZ}.  Recently Chen-Sun-Tian \cite{CST} proved that a compact K\"ahler-Ricci soliton with positive bisectional curvature is biholomorphic to a complex projective space  without using the resolution of the Frankel conjecture. Hence their result together with the previous results gives a proof of the Frankel conjecture via the K\"ahler-Ricci flow. 

It is then very natural to study the Sasakian manifolds with positive curvature in suitable sense. 
We use the Sasaki-Ricci flow to  study Sasakian manifolds  when the {\it transverse holomorphic bisectional curvature} is positive. The transverse (holomorphic) bisectional curvature is defined to be the bisectional curvature of the transverse K\"ahler structure.  Following the proof in the K\"ahler setting \cite{Bando, Mok}, one can show that the positivity of transverse bisectional curvature is preserved along the Sasaki-Ricci flow. It follows that  the transverse bisectional curvature is bounded by its transverse scalar curvature, hence it is bounded. It then follows that the Sasaki-Ricci flow will converge to a Sasaki-Ricci soliton in suitable sense. It would then be very interesting to classify  Sasaki-Ricci solitons with positive (or  nonnegative) transverse bisectional curvature. The success of the classification of Sasaki-Ricci solitons with positive (or  nonnegative) transverse bisectional curvature would lead to classification  of Sasakian manifolds with positive (nonnegative) transverse bisectional curvature, parallel to the results in \cite{Siu-Yau, Mori, Bando, Mok}. 

The organization of the paper is as follows: in Section 2 and Section 3, we summarize definition and some facts of Sasakian manifolds, the transverse K\"ahler structure and the Sasaki-Ricci flow. In Section 4 we introduce Perelman's $\cW$ functional on Sasakian manifolds and prove that it is monotone along the Sasaki-Ricci flow. In Section 5 we prove that the Ricci potential and the transverse scalar curvature are bounded in terms of the diameter along the flow. This section is pretty much the same as in the K\"ahler setting \cite{Sesum-Tian}.  In Section 6 we prove that the diameter is uniformly bounded along the Sasaki-Ricci flow if the Sasakian structure is regular or quasi-regular. In Section 7 we use the approximation mentioned above to prove that the diameter is uniformly bounded along the flow if the Sasakian structure is irregular. In Section 8 we study the Sasaki-Ricci flow for the initial metric with positive (nonnegative) transverse bisectional curvature; in particular we prove that the Sasaki-Ricci flow converges to a Sasaki-Ricci soliton with such an initial metric. In appendix we summarize some geometric and topological results of Sasakian manifolds with positive transverse bisectional curvature.

\begin{rmk}
On Mar 29th, 2011, about one day before this article posted on arxiv.org, I found T. Collins posted his paper  arXiv:1103.5720 in which he proved that Perelman's results on Kahler-Ricci flow  can be generalized to Sasaki-Ricci flow. I saw his paper before I posted this article which has a substantial overlap on generalization of Perelman's results to Sasaki-Ricci flow. 
\end{rmk}

\vspace{2mm}

{\bf Acknowledgement:} The author is grateful for Prof. Xiuxiong Chen and Prof. Jingyi Chen for constant support. The author is partially supported by an NSF grant. 

\section{Sasakian Manifolds}
In this section we recall definition and some basic facts  of Sasakian manifolds. For instance,  the recent monograph \cite{BG} is a nice reference for the details.  
Let $(M, g)$ be a $2n+1$ dimensional smooth Riemannian manifold ($M$ is assumed to be oriented, connected and compact unless specified otherwise), $\nabla$ the Levi-Civita connection of the Riemannian metric $g$, and let $R(X, Y)$ and $Ric$ denote the Riemannian curvature tensor and the Ricci tensor of $g$ respectively.   A Riemannian manifold $(M, g)$ is said to be a Sasakian manifold if and only if  the metric cone $\left(C(M)= M\times \R^{+},  \bar g=dr^2+r^2 g\right)$ is K\"ahler. $M$ is often identified with the submanifold $M\times \{1\}\subset C(M)$ $(r=1)$. Let $\bar J$ denote the complex structure of $C(M)$ which is compatible with $\bar g$. 
A Sasakian manifold $(M, g)$ inherits a number of geometric structures from the K\"ahler structure of its metric  cone. In particular, the {\it Reeb vector field} $\xi$ plays a very important role. The vector filed $\xi$ is defined as $\xi=\bar J(r\p_r)$. This  gives a 1-form $\eta(\cdot)=r^{-2}\bar g(\xi, \cdot)$. We shall  use $(\xi, \eta)$ to denote the  corresponding vector filed and 1-form on $M\cong M\times \{1\}$. One can see that
\begin{itemize}
\item $\xi$ is a Killing vector field on $M$ and $L_\xi\bar J=0$;
\item $\eta(\xi)=1, \iota_{\xi} d\eta(\cdot)=d\eta (\xi, \cdot)=0$;
\item the integral curves of $\xi$ are geodesics.  
\end{itemize}
The 1-form $\eta$ defines a vector sub-bundle $\cD$ of the tangent bundle $TM$ such that
 $\cD=ker(\eta)$.  There is an orthogonal decomposition of the tangent bundle \[TM=\cD\oplus L\xi,\] where $L\xi$ is the trivial  bundle generalized by $\xi$.
We can then introduce the $(1, 1)$ tensor field $\Phi$ such that
\[
\Phi(\xi)=0 \;\mbox{and}\; \Phi(X)=\bar JX, X\in \Gamma(\cD),
\]
where $M$ is identified with $M\times {1}\subset C(M)$. One can see that $\Phi$ can also be defined by
\[
\Phi(X)=\nabla_X \xi, X\in \Gamma(TM). 
\]
One can check that $\Phi$ satisfies \[\Phi^2=-I+\eta\otimes \xi \;\mbox {and}\; g(\Phi X, \Phi Y)=g(X, Y)-\eta(X)\eta(Y).\] 
 Moreover $\Phi$ is compatible with the 2-form $d\eta$ 
\[
d\eta(\Phi X, \Phi Y)=d\eta(X, Y),  X, Y\in \Gamma(TM).
\]
One can also check that
\[
g(X, Y)=\frac{1}{2} d\eta(X, \Phi Y), X, Y\in \Gamma(\cD). 
\]
Hence $d\eta$ defines a symplectic form on $\cD$, and $\eta$ is a contact form on $M$ since $\eta\wedge \left(\frac{1}{2}d\eta\right)^n$ is the volume form of $g$ and is then nowhere vanishing.  The following proposition shows some  equivalent descriptions of a Sasakian structure.  The proof can be found in \cite{BG} (Section 7).

\begin{prop}\label{P-2-1}
Let $(M, g)$ be a $2n+1$ dimensional Riemannian manifold. Then the following conditions are equivalent and can be used as the definition of a Sasakian structure.
\begin{enumerate}
\item $\left(C(M)\cong M\times \R^{+},  \tilde g=dr^2+r^2 g\right)$ is K\"ahler.
\item  There exists a Killing vector filed $\xi$ of unit length such that the tensor field $\Phi (X)=\nabla_X \xi$ satisfies
\begin{equation*}
(\nabla_X\Phi)(Y)=g(\xi, Y)X-g(X, Y)\xi.\end{equation*}

\item There exists a Killing vector filed $\xi$ of unit length such that the curvature satisfies
\begin{equation*}
R(X, \xi)Y=g(\xi, Y)X-g(X, Y)\xi.
\end{equation*}

\end{enumerate}
\end{prop}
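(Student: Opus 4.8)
The plan is to prove the two equivalences $(1)\Leftrightarrow(2)$ and $(2)\Leftrightarrow(3)$ separately. The second is a pointwise computation on $(M,g)$, while the first is where the geometry of the cone enters, and it is there that I expect the real work to lie.

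I would dispose of $(2)\Leftrightarrow(3)$ first, since it reduces to a single classical identity. Because $\Phi=\nabla\xi$, i.e. $\Phi Y=\nabla_Y\xi$, we have $(\nabla_X\Phi)(Y)=\nabla_X\nabla_Y\xi-\nabla_{\nabla_X Y}\xi=\nabla^2_{X,Y}\xi$. For \emph{any} Killing field the Hessian is controlled by the curvature, $\nabla^2_{X,Y}\xi=R(X,\xi)Y$ in the paper's curvature convention, and I would include a short self-contained derivation: antisymmetrizing gives $R(X,Y)\xi=(\nabla_X\Phi)(Y)-(\nabla_Y\Phi)(X)$ directly from torsion-freeness, and the skew-symmetry of $\Phi=\nabla\xi$ (the Killing condition) together with the first Bianchi identity then pins down the full, non-antisymmetrized Hessian. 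Granting this, the tensor identity of $(2)$ and the curvature identity of $(3)$ are literally the same equation $(\nabla_X\Phi)(Y)=R(X,\xi)Y=g(\xi,Y)X-g(X,Y)\xi$, so $(2)\Leftrightarrow(3)$ follows with no further work.

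For $(1)\Rightarrow(2)$ I would work on the cone with Levi-Civita connection $\bar\nabla$ and complex structure $\bar J$, and set $\xi=\bar J(r\p_r)$. The essential tool is the set of cone (warped-product) identities: for $X,Y$ tangent to $M$, $\bar\nabla_X Y=\nabla_X Y-r\,g(X,Y)\p_r$, $\bar\nabla_{\p_r}X=\bar\nabla_X\p_r=\tfrac1r X$, and $\bar\nabla_{\p_r}\p_r=0$. From $\bar g(\bar J(r\p_r),\bar J(r\p_r))=\bar g(r\p_r,r\p_r)=r^2$ one reads off $|\xi|_g=1$ at $r=1$, and that $\xi$ is Killing on $(M,g)$ follows from $L_\xi\bar g=0$, which holds because $\bar J$ is $\bar\nabla$-parallel and $r\p_r=\tfrac12\,\mathrm{grad}_{\bar g}(r^2)$ is a gradient field. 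Using $\bar\nabla\bar J=0$ and the identities above, $\bar\nabla_X\xi=\bar J(\bar\nabla_X(r\p_r))=\bar J X$ along $M$; projecting onto $TM$ identifies $\Phi X=\nabla_X\xi$ (the $\p_r$-corrections cancel), and differentiating once more along the cone and projecting back yields exactly $(\nabla_X\Phi)(Y)=g(\xi,Y)X-g(X,Y)\xi$.

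For the converse $(2)\Rightarrow(1)$ I would reverse this construction: define an almost complex structure on $C(M)$ by $\bar J\p_r=\tfrac1r\xi$, $\bar J\xi=-r\p_r$, and $\bar J X=\Phi X$ for $X\in\cD=\ker\eta$. Using $\Phi^2=-I+\eta\otimes\xi$ and the compatibility of $\Phi$ with $d\eta$ one checks $\bar J^2=-\mathrm{Id}$ and $\bar g(\bar J\,\cdot,\bar J\,\cdot)=\bar g(\cdot,\cdot)$, so that $\bar J$ is an orthogonal almost complex structure. The cone identities then translate the single tensor identity of $(2)$ into $\bar\nabla\bar J=0$, checked case by case on the pairs $(\p_r,\p_r)$, $(\p_r,TM)$, and $(TM,TM)$; since a parallel orthogonal almost complex structure is automatically integrable, $(C(M),\bar g,\bar J)$ is Kähler. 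The main obstacle throughout $(1)\Leftrightarrow(2)$ is bookkeeping: extending $\bar J$ correctly across the $\p_r$ and $\xi$ directions and tracking the warping factor $r$ so that $\bar\nabla\bar J=0$ reduces to the stated identity with no stray factors, whereas $(2)\Leftrightarrow(3)$ is delicate only in fixing the sign of the Killing Hessian identity consistently with the curvature convention.
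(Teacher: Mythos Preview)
The paper does not actually prove this proposition; immediately before the statement it says ``The proof can be found in \cite{BG} (Section 7)'' and gives no argument of its own. So there is no in-paper proof to compare against.

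That said, your outline is the standard one and is essentially what one finds in the Boyer--Galicki reference. The equivalence $(2)\Leftrightarrow(3)$ via the Killing Hessian identity $\nabla^2_{X,Y}\xi=R(X,\xi)Y$ is exactly right, and for $(1)\Leftrightarrow(2)$ the warped-product connection formulas together with $\bar\nabla\bar J=0$ are the correct mechanism. One small caution: your derivation that $\xi$ is Killing from ``$\bar J$ is parallel and $r\p_r$ is a gradient'' is a bit quick---the cleanest route is to note that $r\p_r$ is the Euler (homothetic) field with $L_{r\p_r}\bar g=2\bar g$, so $L_\xi\bar g=L_{\bar J(r\p_r)}\bar g=0$ follows because $\bar J$ commutes with the K\"ahler isometry/homothety structure, and then one restricts to $r=1$. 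Otherwise the proposal is sound; you would just need to flesh out the case-by-case verification of $\bar\nabla\bar J=0$ in the converse direction.
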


Recall that the Reeb vector field $\xi$ defines a foliation $\cF_\xi$ of $M$ through its orbits. There is then a classification of Sasakian structures according to the global property of its orbits. If all the orbits are compact, hence circles, then $\xi$ generates a circle action on $M$. If the circle action is free, then $M$ is called {\it regular}. If the circle action is only locally free, then $M$ is called {\it quasiregular}. On the other hand, if $\xi$ has a non-compact orbit the Sasakian manifold is said to be {\it irregular}. We conclude this section by introducing the following structure theorem of a quasiregular Sasakian structure (see \cite{BG}, Theorem  7.1.3). 
\begin{theo}Let $(M, \xi, \eta, \Phi, g)$ be a quasi-regular Sasakian manifold with compact leaves. Let $Z=M/\cF_\xi$ denote the space of leaves of the Reeb foliation $\cF_\xi$.
Then

(1). The leaf space $Z$ is a compact complex orbifold with a K\"ahler metric $h$ and the K\"ahler form $\omega$,  which defines an integral class $[\omega]$ in $H^2_{orb}(Z, \Z)$. The canonical projection   $\pi: (M, g)\rightarrow (Z, h)$ is an orbifold Riemannian submersion. The fibers of $\pi$ are totally geodesic submanifolds of $M$ diffeomorphic to $S^1$. 

(2). $M$ is the total space of a principle $S^1$-orbibundle over $Z$ with connection $1$-form $\eta$ whose curvature $d\eta=\pi^{*}\omega$.  

(3). If the foliation $\cF_\xi$ is regular, then $M$ is the total space of a principle $S^1$-bundle over the  K\"ahler manifold $(Z, h)$. 
\end{theo}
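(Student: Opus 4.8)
The plan is to realize the Reeb foliation $\cF_\xi$ as the orbit foliation of an isometric circle action and then push the transverse Sasakian data down to the quotient. First I would observe that since $M$ is compact and every leaf of $\cF_\xi$ is compact, each integral curve of the unit Killing field $\xi$ is a closed geodesic, so the flow of $\xi$ is periodic and defines an isometric $S^1$-action on $(M,g)$. In the regular case this action is free, and by the standard theory of free compact group actions $Z=M/\cF_\xi$ is a smooth manifold with $\pi\colon M\to Z$ a principal $S^1$-bundle; in the quasi-regular case the action is only locally free, with finite cyclic isotropy groups, and the slice theorem for locally free actions of compact Lie groups endows $Z$ with the structure of a compact smooth orbifold for which $\pi$ is an orbifold submersion. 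This is the content underlying statements (2) and (3).

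Next I would descend the transverse geometry. The orthogonal splitting $TM=\cD\oplus L\xi$ identifies $\cD=\ker\eta$ with the normal bundle of $\cF_\xi$. Because $\xi$ is Killing we have $L_\xi g=0$, and from $L_\xi\bar J=0$ one gets $L_\xi\Phi=0$ and $L_\xi\eta=0$; hence $g|_\cD$ and $\Phi|_\cD$ are invariant under the flow and project to well-defined tensors $h$ and $J$ on $Z$. On $\cD$ one has $\Phi^2=-I$, so $J$ is an almost complex structure, and $\pi_*$ maps $\cD$ isometrically onto $TZ$, making $\pi$ an (orbifold) Riemannian submersion with horizontal distribution $\cD$; since the integral curves of $\xi$ are geodesics, the $S^1$-fibers are totally geodesic. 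The integrability of $J$ I would inherit from the K\"ahler cone: as $\bar J$ is integrable on $C(M)$ and $\xi=\bar J(r\p_r)$, the foliation is transversely holomorphic, so $(Z,J)$ is a genuine complex orbifold. Equivalently, one checks that the Nijenhuis tensor of $\Phi|_\cD$ vanishes using the identity $(\nabla_X\Phi)Y=g(\xi,Y)X-g(X,Y)\xi$.

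For the K\"ahler and integrality claims, I would first note that $d\eta$ is basic: $\iota_\xi d\eta=0$ is given, and $L_\xi d\eta=d\iota_\xi d\eta=0$ by Cartan's formula. Hence $d\eta$ descends to a closed $2$-form $\omega$ on $Z$ with $\pi^*\omega=d\eta$. The compatibility $g(X,Y)=\tfrac12 d\eta(X,\Phi Y)$ together with $d\eta(\Phi X,\Phi Y)=d\eta(X,Y)$ shows that $\omega$ is, up to the implicit factor, the fundamental $2$-form of $(h,J)$; being closed and compatible with the integrable $J$, it is a K\"ahler form, so $(Z,h,J,\omega)$ is K\"ahler. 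Finally, $\eta(\xi)=1$ and $L_\xi\eta=0$ are exactly the defining conditions for $\eta$ to be a connection $1$-form on the principal $S^1$-(orbi)bundle $\pi$, and its curvature is $d\eta=\pi^*\omega$; by Chern-Weil theory for $S^1$-orbibundles the curvature represents a constant multiple of the first orbifold Chern class, giving $[\omega]\in H^2_{orb}(Z,\Z)$.

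I expect the main obstacle to be the two orbifold-theoretic points rather than the tensorial bookkeeping, which is essentially formal once integrability is known. The first is producing the orbifold structure on $Z$ in the quasi-regular case via the slice theorem and exhibiting compatible charts in which $h$, $J$, and $\omega$ are smooth; the second is developing enough orbifold Chern-Weil theory to define $H^2_{orb}(Z,\Z)$ and to identify the curvature of $\eta$ with an integral class. The one genuinely analytic input is the integrability of the transverse almost complex structure, and it is cleanest to deduce this from the integrability of $\bar J$ on the K\"ahler cone rather than from a direct Nijenhuis computation.
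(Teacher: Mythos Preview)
The paper does not prove this theorem; it is quoted as background and attributed to Boyer--Galicki's monograph (Theorem~7.1.3 there), so there is no in-paper argument to compare against. Your sketch is the standard route taken in that reference: realize $\cF_\xi$ as the orbit foliation of an isometric circle action, apply the slice theorem to obtain the orbifold quotient, push the $\xi$-invariant tensors $g|_\cD$, $\Phi|_\cD$, $d\eta$ down, and read off the connection/curvature and integrality from orbifold Chern--Weil.

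One step you pass over deserves a word of caution. From ``every orbit of $\xi$ is a circle'' it does \emph{not} follow formally that the flow of $\xi$ is periodic; a priori the periods could vary and fail to have a common multiple. What makes this work is that $\xi$ is unit Killing, so its orbits are geodesics, and Wadsley's theorem (or the argument via the closure of the one-parameter group inside the compact isometry group) then upgrades ``all orbits closed'' to a genuine locally free $S^1$-action with the same orbits. You should name this input explicitly; without it the passage to a principal $S^1$-orbibundle is not justified. The remaining points you flag---the slice-theorem construction of the orbifold atlas on $Z$ and the orbifold Chern--Weil identification of $[\omega]$ as an integral class (mind the $2\pi$ normalization)---are exactly the places where the work lies, and your outline of how to descend $h$, $J$, $\omega$ and verify the K\"ahler conditions is correct.
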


\section{Transverse K\"ahler Structure and the Sasaki-Ricci Flow}
In this section we recall the {\it transverse K\"ahler structure} of Sasakian manifolds, which is a special case of transverse Riemannian structure. We shall introduce the transverse K\"ahler structure both globally (coordinate free) and locally (in local coordinates). These two description are equivalent and readers are referred to \cite{BG}, Section 2.5 for some more details about  transverse Riemannian structure. 

Let $M$ be a Sasakian manifold with $(\xi, \eta, \Phi, g)$.  Recall that
$\cD=ker(\eta)$ is a $2n$ dimensional subbunble of $TM$. 
If we denote
\[
J:=\Phi|_\cD,
\]
then $J$ is a complex structure on $\cD$ and it is compatible with $d\eta$. 
Hence $(\cD, J, d\eta)$ defines a K\"ahler metric on $\cD$. We define the  transverse K\"ahler metric $g^T$ as
\begin{equation}\label{E-1}
g^T(X, Y)=\frac{1}{2}d\eta (X, \Phi Y), X, Y\in \Gamma(\cD). 
\end{equation}
The metric $g^T$ is related to the Sasakian metric $g$ by
\begin{equation}\label{E-2}
g=g^T+\eta\otimes \eta. 
\end{equation}
 From the transverse metric $g^T$, we can define a connection on $\cD$ by
 \begin{equation}\label{E-3}
 \begin{split}
 \nabla^T_XY&=\left(\nabla_XY\right)^p, X, Y\in \Gamma(\cD)\\
 \nabla^T_\xi Y&=[\xi, Y]^p, Y\in \Gamma(\cD),
 \end{split}
 \end{equation}
where $X^p$ denotes the projection of $X$ onto $\cD$. One can check that this connection is the unique torsion free such that $\nabla^T g^T=0$.  The connection $\nabla^T$ is called the {\it transverse Levi-Civita connection}. 
Note that by Proposition \ref{P-2-1} (2), one can get that
\[
\nabla^T J=0. 
\]

We can further define the {\it transverse curvature operator}
by
\begin{equation}
R^T(X, Y)Z=\nabla^T_X\nabla^T_YZ-\nabla^T_Y\nabla^T_XZ-\nabla^T_{[X, Y]}Z.
\end{equation}
One can easily check that
\[
R^T(X, \xi)Y=0.
\]
Also when $X, Y, Z, W \in \Gamma(\cD)$, we have the following relation
\begin{equation}\label{E-2-curvature}
R(X, Y, Z, W)=R^T(X, Y, Z, W)-g(\Phi Y, W)g(\Phi X, Z)+g(\Phi X, W) g(\Phi Y, Z).
\end{equation}

The {\it transverse Ricci curvature} is then defined by
\[
Ric^T(X, Y)=\sum_i g\left(R^T(X, e_i)e_i, Y\right)=\sum_i g^T\left(R^T(X, e_i)e_i, Y\right),
\]
where $\{e_i\}$ is an orthonormal basic of $\cD$.  When $X, Y\in \Gamma(D)$, one can get that
\begin{equation}\label{E-5}
Ric^T(X, Y)=Ric(X, Y)+2g(X, Y)=Ric(X, Y)+2g^T(X, Y). 
\end{equation}
Hence for the {\it transverse scalar curvature} one can get that
\begin{equation}\label{E-4}
R^T=R+2n.
\end{equation}

\begin{defi}A Sasakian manifold is $\eta$-Einstein if there are two constants $\lambda$ and $\mu$ such that
\[
Ric=\l g+\mu \eta\otimes \eta. 
\]
\end{defi}
By Proposition 2.1, we have   $Ric(\xi, \xi)=2m$, hence $\l+\mu=2n$. 

\begin{defi}A Sasakian manifold is transverse K\"ahler-Einstein if there is a constant $\lambda$ such that
\[
Ric^T=\lambda g^T. 
\]
\end{defi}
By \eqref{E-5}, a Sasakian manifold is $\eta$-Einstein if and only if it is a transverse K\"ahler-Einstein. 
\begin{defi}
A Sasakian manifold $(M, g)$ is Sasakian-Einstein if $Ric=2n g$. 
\end{defi}
Note that a Sasakian manifold which is Sasakian-Einstein is necessary Ricci positive.

We shall also recall  the Sasakian structure and its transverse structure on local coordinates. For the details, see \cite{FOW}  (Section 3) for example. Let $(M, \xi, \eta, \Phi, g)$ be the Sasakian metric and  let  $g^T$ be the transverse K\"ahler metric.  
Let $U_\alpha$ be an open covering of $M$ and $\pi_\alpha: U_\alpha\rightarrow V_\alpha\subset \C^n$ submersions
such that 
\[
\pi_\alpha\circ \pi^{-1}_\beta: \pi_\beta(U_\alpha\cap U_\beta)\rightarrow \pi_\alpha (U_\alpha\cap U_\beta)
\]
is biholomorphic when $U_\alpha\cap U_\beta$ is not empty. One can choose  local coordinate charts $(z_1, \cdots, z_n)$ on $V_\alpha$ and local coordinate charts $(x, z_1, \cdots, z_n)$ on  $U_\alpha\subset M$  such that $\xi=\p_x$, where we use the notations
\[
\p_x=\frac{\p}{\p x}, \p_i=\frac{\p}{\p z_i}, \bar \p_{ j}=\p_{\bar j}=\frac{\p}{\p \bar z_{ j}}=\frac{\p}{\p z_{\bar j}}. 
\]
The map $\pi_\alpha: (x, z_1, \cdots, z_n)\rightarrow (z_1, \cdots, z_n)$ is then the natural projection. There is an isomorphism, for any $p\in U_\alpha$,
\[
d\pi_\alpha: \cD_p\rightarrow T_{\pi_\alpha(p)}V_\alpha. 
\]
Hence the restriction of $g$ on $\cD$ gives a well defined Hermitian metric $g^T_\alpha$ on $V_\alpha$ since $\xi$ generates isometries of $g$. 
One can actually verify that there is a well defined K\"ahler metric $g_\alpha^T$ on each $V_\alpha$ and 
\[
\pi_\alpha\circ \pi^{-1}_\beta: \pi_\beta(U_\alpha\cap U_\beta)\rightarrow \pi_\alpha (U_\alpha\cap U_\beta)
\]
gives an isometry of K\"ahler manifolds $(V_\alpha, g^T_\alpha)$.  The collection of K\"ahler metrics $\{g^T_\alpha\}$ on $\{V_\alpha\}$ can be used as an alternative definition of the transverse K\"ahler metric.
This definition is essentially  just the description of a transverse Riemannian geometry of a Riemannian foliation in terms of {\it Haefliger cocycles}, see \cite{BG} Section 2.5
 for more details.  We shall also use $\nabla^T_\alpha$, $R^T_\alpha(X, Y), Ric^T_\alpha$ and $R^T_\alpha$ for its Levi-Civita connection, the curvature, the Ricci curvature and the scalar curvature. 
The two definition of the transverse K\"ahler structure are actually equivalent.  We can see that 
$(\cD\otimes \C)^{(1, 0)}$ is spanned by the vectors of the form $\{\p_i+a_i\p_x\}, 1\leq i\leq n$, where $a_i=-\eta(\p_i)$. It is clear that
\[
d\eta(\p_i+a_i\p_x, \overline{\p_j+a_j\p_x})=d\eta(\p_i, \p_{\bar j}), 
\]
Hence the K\"ahler form of $g^T_\alpha$ on $V_\alpha$ is then the same as $\frac{1}{2}d\eta$ restricted on the slice $\{x=constant\}$ in $U_\alpha$. Moreover  for any $p\in U_\alpha$ and $X, Y\in \cD_p$,  
we have
\[
g^T(X, Y)=g^T_\alpha(d\pi_{\alpha} (X), d\pi_{\alpha}(Y)). 
\]
Hence $d\pi_\alpha: \cD_p\rightarrow T_{\pi_\alpha(p)}V_\alpha$ gives the isometry of $g^T$ on $\cD_p$ and $g^T_\alpha$ on $T_{\pi_\alpha(p)}V_\alpha$ for any $p\in U_\alpha$.  
For example one can easily check the following. For $X, Y, Z\in \Gamma(\cD)$ and $X_\alpha=d\pi_\alpha(X)\in TV_\alpha$, 
\[
\begin{split}
&d\pi_\alpha \left(\nabla^T_XY\right)=\left(\nabla^T_\alpha\right)_{X_\alpha} Y_\alpha =d\pi_\alpha (\nabla_XY), \\
&d\pi_\alpha\left(R^T(X, Y)Z\right)=R^T_\alpha(X_\alpha, Y_\alpha)Z_\alpha. 
\end{split}
\]
In this paper we shall treat the transverse K\"ahler metric by these two equivalent descriptions. For example, 
one can then easily verify curvature identities for $R^T$ since $R^T_\alpha$ is actually the curvature of  the K\"ahler metric $g^T_\alpha$ on $V_\alpha$. And it is very convenient to  do local computations using  $g^T_\alpha$ on $V_\alpha$.  While when we deal with some global features of $g^T$, such as integration  by parts, we shall use $g^T$. For simplicity, we shall suppress the index $\alpha$ without emphasis even when we do local computations on $V_\alpha$.

\begin{defi} A $p$-form $\theta$ on $M$ is called basic if
\[
\iota_\xi \theta=0, L_\xi \theta=0.
\]
Let $\Lambda^p_B$ be the sheaf of germs of basic $p$-forms and $\Omega^p_B=\Gamma(S, \Lambda^p_B)$ the set of sections of $\Lambda^p_B$.  
\end{defi}
The exterior differential preserves basic forms. We set $d_B=d|_{\Omega^p_B}$. 
Thus the subalgebra $\Omega_{B}(\cF_\xi)$ forms a subcomplex of the de Rham complex, and its cohomology ring $H^{*}_{B}(\cF_\xi)$  is called the {\it basic cohomology ring}. In particular, there is a transverse Hodge theory \cite{EKAH86, KT87, Ton97}. The transverse Hodge star operator $*_{B}$ is defined in terms of the usual Hodge star by
\[
*_{B} \alpha=*(\eta\wedge \alpha). 
\]
The adjoint $d^{*}_B: \Omega^p_{B}\rightarrow \Omega^{p-1}$ of $d_B$ is defined by
\[
d^*_{B}=-*_{B} d_B *_{B}. 
\]
The {\it basic Laplacian} operator $\t_B=d_B d^{*}_B+d^{*}_Bd_B$. The space $\cH^p_B(\cF_\xi)$ of {\it basic harmonic $p$-forms } is then defined to be the kernel of $\t_B: \Omega^p_{B}\rightarrow\Omega^p_{B}$. The transverse Hodge Theorem \cite{EKAH86} then says that each basic cohomology class has a unique harmonic representative. 

When $(M, \xi, \eta, g)$ is an Sasakian structure, there is a natural splitting of $\Lambda^p_B\otimes \C$ such that
\[
\Lambda^p_B\otimes \C=\oplus \Lambda^{i, j}_B,
\]
where $\Lambda^{i, j}_B$ is the bundle of type $(i, j)$ basic forms. We thus have the well defined operators
\[
\begin{split}
\p_B: \Omega^{i, j}_B\rightarrow \Omega^{i+1, j}_B,\\
\bar\p_B: \Omega^{i, j}_B\rightarrow \Omega^{i, j+1}_B.
\end{split}
\]
Then we have $d_B=\p_B+\bar \p_B$. 
Set $d^c_B=\frac{1}{2}\i\left(\bar \p_B-\p_B\right).$ It is clear that
\[
d_Bd_B^c=\i\p_B\bar\p_B, d_B^2=(d_B^c)^2=0.
\]
We  have the adjoint operators
\[
\p_B^{*}=-*_{B}\circ \bar\p_B\circ  *_{B}: \Omega^{i, j}_B\rightarrow \Omega^{i-1, j}_B
\]
and
\[
\bar \p_B^{*}=-*_{B}\circ \p_B\circ *_{B}: \Omega^{i, j}_B\rightarrow \Omega^{i, j-1}_B
\]
of $\p_B$ and $\bar \p_B$ respectively. For simplicity we shall use $\p, \bar \p, \bar \p^{*}$ and $\p^{*}$ if there is not confusion. We can define
\[
\t^B_{\p}=\p\p^{*}+\p^{*}\p, \t^B_{\bar \p}=\bar \p \bar \p^{*}+\bar\p^{*}\bar \p. 
\]
We define the operator $L: \Omega^{i, j}_B\rightarrow \Omega^{i+1, j+1}_B$ by
\[
L\alpha=\alpha\wedge \frac{1}{2}d\eta,
\]
and its adjoint operator $\Lambda: \Omega^{i+1, j+1}_B\rightarrow \Omega^{i, j}_B$ by
\[
\Lambda=-*_{B}\circ L\circ *_{B}
\]
As usual, one has
\begin{prop}
On a Sasakian manifold, one has
\begin{subequations}
\begin{align}
&[\Lambda, \p]=-\i \bar \p^{*},\label{E-kaha}\\
&[\Lambda, \bar\p]=\i \p^{*},\label{E-kahb}\\
&\p\bar \p^{*}+\bar\p^{*}\p=\p^{*}\bar\p+\bar\p \p^{*}=0,\\
&\t_{B}=2\t^{B}_{\p}=2\t^B_{\bar \p}. 
\end{align}\end{subequations}
\end{prop}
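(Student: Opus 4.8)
The plan is to deduce all four identities from the classical Kähler identities by passing to the local quotients $V_\alpha$. Recall from the discussion above that the submersion $\pi_\alpha$ identifies basic forms on $U_\alpha$ with forms on $V_\alpha\subset\C^n$, and that $d\pi_\alpha$ is a fibrewise isometry of $(\cD,g^T)$ with $(TV_\alpha,g^T_\alpha)$ intertwining the complex structures (here $\nabla^T J=0$ makes $(V_\alpha,g^T_\alpha)$ genuinely Kähler). Under this identification $\p_B,\bar\p_B$ correspond to the Dolbeault operators $\p,\bar\p$ of $(V_\alpha,g^T_\alpha)$, and $L=\,\cdot\wedge\tfrac12 d\eta$ corresponds to the Lefschetz operator of the Kähler form of $g^T_\alpha$. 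Since every operator appearing in \eqref{E-kaha}--\eqref{E-kahb} and the two subsequent relations is a local operator built from $\p_B$, $\bar\p_B$, $L$ and $*_B$, it suffices to establish the identities after this local identification; they then transport back to $M$ verbatim because the correspondence is compatible with composition and all operators preserve basic forms.

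The key step is to match the transverse Hodge star $*_B$ with the Hodge star $*_\alpha$ of $g^T_\alpha$. I would choose a local orthonormal coframe $\{\eta,e^1,\dots,e^{2n}\}$ adapted to the orthogonal splitting \eqref{E-2}, with $e^1,\dots,e^{2n}$ transverse, so that the Riemannian volume form factors as $dV_g=\eta\wedge e^1\wedge\cdots\wedge e^{2n}=\eta\wedge dV_{g^T}$. For a basic $p$-form $\theta$, expanding $\theta$ in the transverse coframe and computing $*(\eta\wedge\theta)$ in this frame shows directly that $*_B\theta=*(\eta\wedge\theta)$ is again basic and equals the transverse Hodge star of $\theta$; under $d\pi_\alpha$ this is precisely $*_\alpha$ applied to the corresponding form on $V_\alpha$. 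Granting this matching, the operators $\p^*=-*_B\bar\p *_B$, $\bar\p^*=-*_B\p *_B$ and $\Lambda=-*_B L *_B$ correspond respectively to the Dolbeault adjoints and the Lefschetz adjoint of $(V_\alpha,g^T_\alpha)$, since each is the same composition of corresponding operators.

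With this dictionary in place, \eqref{E-kaha} and \eqref{E-kahb} are exactly the Kähler commutation relations $[\Lambda,\p]=-\i\bar\p^{*}$ and $[\Lambda,\bar\p]=\i\p^{*}$ on $(V_\alpha,g^T_\alpha)$. The third relation $\p\bar\p^{*}+\bar\p^{*}\p=0$ follows formally from \eqref{E-kaha}--\eqref{E-kahb} by taking brackets with $\p$, $\bar\p$ and using $\p^2=\bar\p^2=0$, and the fourth relation $\t_B=2\t^B_\p=2\t^B_{\bar\p}$ is the Kähler identity between the three Laplacians, again a purely formal consequence of \eqref{E-kaha}--\eqref{E-kahb}. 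Thus, once the operators on $V_\alpha$ are identified, the entire Proposition reduces to the standard Kähler argument (for instance Weil's proof in Kähler normal coordinates, available here because $\nabla^T J=0$ furnishes a transverse Kähler normal chart on $V_\alpha$).

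The main obstacle I anticipate is the careful verification that $*_B$ equals $*_\alpha$ on the quotient, i.e. the volume-form and orientation bookkeeping in the factorization $dV_g=\eta\wedge dV_{g^T}$ and the sign computation of $*(\eta\wedge\theta)$ in the adapted frame. A secondary point to confirm is that the star-defined operators $\p^{*},\bar\p^{*}$ genuinely are the $L^2$-adjoints of $\p,\bar\p$ with respect to integration over $M$; this is where the Sasakian structure is essential, since $\xi$ generates isometries so that the $L^2$ pairing on basic forms descends to the transverse pairing and no fibre contributions interfere. Once the local Kähler model is pinned down, the remainder of the proof is classical.
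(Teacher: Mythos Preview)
Your approach is correct and is precisely the natural one: reduce the identities to the classical K\"ahler identities on each local quotient $(V_\alpha,g^T_\alpha)$ by checking that $*_B$ corresponds to the K\"ahler Hodge star under the identification $d\pi_\alpha$. The paper itself does not supply a proof of this proposition; it is stated with the phrase ``As usual, one has'' and then used freely, in keeping with the paper's general philosophy (articulated just before the proposition) that transverse K\"ahler computations may be carried out on the local K\"ahler patches $V_\alpha$ via the Haefliger cocycle description. So there is nothing to compare against, and your outline is exactly the argument the paper is implicitly invoking.

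Two small remarks. First, your ``secondary point'' about $\p^{*},\bar\p^{*}$ being genuine $L^2$-adjoints is not actually needed for the proposition as stated, since the paper \emph{defines} these operators by the Hodge-star formulas $\p^{*}=-*_B\bar\p_B*_B$, $\bar\p^{*}=-*_B\p_B*_B$ rather than as functional-analytic adjoints; the identities \eqref{E-kaha}--\eqref{E-kahb} are therefore purely algebraic once $*_B$ is matched with $*_\alpha$. (The paper does verify the $L^2$-adjoint property separately in the Appendix, Proposition 9.2, where it is needed for integration by parts.) Second, your factorization $dV_g=\eta\wedge dV_{g^T}$ and the resulting identification $*_B\theta=*_\alpha\theta$ for basic $\theta$ is exactly right; the only bookkeeping is the sign in $*_B\alpha=*(\eta\wedge\alpha)$, which you have already flagged.
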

We shall also define the transverse Laplacian $\t^T$ for basic functions and basic forms as
\[
\t^T= \frac{1}{2}g^{i\bar j}_{T}\left(\nabla_i^T\nabla_{\bar j}^T+\nabla_{\bar j}^T\nabla_i^T\right)
\]
The operator $\t^T$ on basic forms is  well-defined. 
This can be seen easily from the description of the transverse K\"ahler structure in terms of Haefliger cocycles. For basic functions, we have
 \[\t^T f=g^{i\bar j}_T \nabla^T_{i}\nabla^T_{\bar j} f=g^{i\bar j}_T \frac{\p^2 f}{\p z_i\p z_{\bar j}}=-\t_{\bar \p }f. \] 

Now we define a 2-form $\rho^T$ called the transverse Ricci form as follows.
\[
\rho_\alpha^T=-\i\p\bar \p \log \det(g^T_\alpha).
\]

One can see that the pull back forms $\pi^*_\alpha \rho^T_\alpha$ patch together to give a global basic 2-form on $M$, which is denoted by $\rho^T$. As in the K\"ahler case $\rho^T$ is $d_B$ closed and define a basic cohomology class of type $(1, 1)$. The basic cohomology class $[\rho^T]$is independent of the choice of transverse K\"ahler form in the space of Sasaki space. The basic cohomology class $[\rho^T]/2\pi$ is called the basic first Chern class of $M$, and is denoted by $c_1^B(M)$. If there exists a transverse K\"ahler Einstein with
$Ric^T=\lambda \rho^T$ for some constant $\lambda$, then $c_1^B=\lambda [d \eta]_B$. This  implies that $c^B_1$ definite depending on the sign of $\lambda$ and $c_1(D)=0$ (see \cite{FOW, BG} for example).
 As was pointed out by Boyer, Galicki and Matzeu \cite{BGM}, in the negative and zero basic first class case with $c_1(D)=0$ the results of El Kacimi-Alaoui \cite{EKA} together with Yau's estimate \cite{Yau} imply that the existence of transverse K\"ahler-Einstein metric. The remaining case is $c_1^B=\lambda[d\eta]_B, \lambda>0$ and this is the case which is related closely to the Sasaki-Einstein metrics. In this case, Futaki-Ono-Wang \cite{FOW} proved that there is a Sasaki-Ricci soliton in $c_1^B$ if $M$ is toric in addition;  by varying the Reeb vector field, they can prove the existence of a Sasaki-Einstein metric on $M$. 
  
It is natural to deform one Sasakian structure to another, for example, to find Sasaki-Einstein metrics. 
There are many natural ways to deform a Sasakian structure, see \cite{BGS1} for example. In the present paper, we 
focus on the Sasaki-Ricci flow introduced in \cite{SWZ},
\begin{equation}
\frac{\p}{\p t}g^T=g^T-\lambda Ric^T,
\end{equation}
where $\lambda$ is a positive constant if $c_1^B>0$. By the so-called D-homothetic deformation, which is essentially just a rescaling of the transverse K\"ahler structure such that the new metric is still Sasakian, 
\[
\tilde \eta= a\eta, \tilde \xi=a^{-1} \xi, \tilde \Phi=\Phi, \tilde g=ag+a(a-1)\eta\otimes \eta,
\] we can assume that $\lambda=1$. 
Roughly speaking, the Sasaki-Ricci flow is to deform a Sasakian metric in such a way that the transverse K\"ahler metric is deformed along its Ricci curvature. In particular, it induces a K\"ahler-Ricci flow 
\[
\frac{\p}{\p t}g^T_\alpha=g^T_\alpha-Ric^T_\alpha
\]
on each $V_\alpha$. 
To study the Sasaki-Ricci flow, we are also  interested in the space of Sasakian metrics
\[
\cH=\{\phi\in C^\infty_B(M), \eta_\phi\wedge (d\eta_\phi)^n\neq 0, \eta_\phi=\eta+d^c_B\phi
\}.\]
This space is studied by Guan-Zhang \cite{GZ1, GZ2} and it  
can be viewed as an analogue of the space of K\"ahler metrics in a fixed K\"ahler class  \cite{Mabuchi87, Semmes96, Donaldson99}. 
For any $\phi\in \cH$, one can define a new Sasaki metric $(\eta_\phi, \xi, K_\phi, g_\phi)$ with the same Reeb vector field $\xi$ such that
\[
\eta_\phi=\eta+d^c_B\phi, K_\phi=K-\xi\otimes d^c_B\phi \circ K.
\]
In terms of the potential $\phi$, one can write the Sasaki-Ricci flow \cite{SWZ} as 
\begin{equation}\label{E-p-1}
\frac{\p \phi}{\p t}=\log\frac{\det(g^T_{i\bar j}+\phi_{i\bar j})}{\det(g_{i\bar j}^T)}+\phi-F,
\end{equation}
where $F$ satisfies
\[
\rho^T_g-d\eta=d_Bd_B^cF. 
\]

\section{Perelman's $\cW$ Functional on Sasakian Manifolds}

In this section we show that Perelman's $\cW$ functional \cite{Perelman01} has its counterpart on Sasakian manifolds. 
First we go over Perelman's $\cW$ functional on K\"ahler manifolds. In particular we repeat some computations of $\cW$ functional  when the metric $g$ is only allowed to vary  as K\"ahler metrics, which would be used when we consider $\cW$ functional on Sasakian manifolds.  On Sasakian manifolds,  a natural requirement is  that the functions appear in $\cW$ functional are basic.  This requirement fits the Sasakian structure very well and $\cW$ functional behaves well under the Sasaki-Ricci flow.  More or less, it can be viewed as  Perelman's $\cW$ functional for the transverse K\"ahler structure. Certainly  there are some different features in the Sasakian setting. For example, the Sasakian structure is  not preserved under scaling and the $\cW$ functional on Sasakian manifolds is  not scaling invariant either. 

First we recall the Perelman's $\cW$ functional when $(M, g)$ is assumed to be a  compact K\"ahler manifold of complex dimension $n$. 
Recall the $\cW$ functional, for $\tau>0$, 
\begin{equation}\label{E-3-1}
\cW(g, f, \tau)=\int_M\left(\tau(R+|\nabla f|^2)+f\right)e^{-f}\tau^{-n}dV,
\end{equation}
where $f\in C^\infty(M)$ satisfies the constraint
\[
\int_M e^{-f}\tau^{-n}dV=1.
\]
Let $z=(z_1, z_2, \cdots, z_n)$ be a holomorphic coordinate chart on $M$. 
We use the notation  \[
 |\nabla f|^2=g^{i\bar j} f_if_{\bar j}=f_if_{\bar i},
 \]
where we  use $g_{i\bar j}$ to denote the metric $g$  and $g^{i\bar j}$ to denote its inverse.

Suppose $\delta \tau=\sigma$, $\delta f=h$ , $\delta g_{i\bar j}=v_{i\bar j}$ and $v=g^{i\bar j} v_{i\bar j}$. We also have $\delta dV=v dV$. Note that we assume there is for some function $\psi$,  at least locally, such that
$v_{i\bar j}=\p_i\p_{\bar j}\psi$. Hence we will have $v_{i\bar j, j}=v_{, i}$. 

\begin{prop}\label{P-3-1} We have the following first variation of $\cW$, 
\begin{equation}\label{E-3-2}
\begin{split}
\delta W(g, f, \tau)=&\int_M \left(\sigma(R+\t f)-\tau v_{j\bar i}(R_{i\bar j}+f_{i\bar j})+h\right)\tau^{-n}e^{-f}dV\\
&+\int_M \left(2\t f-f_if_{\bar i}+R\right)\left(v-h-n\sigma \tau^{-1}\right)\tau^{-n}e^{-f}dV.
\end{split}
\end{equation}
\end{prop}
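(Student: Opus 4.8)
The plan is to differentiate the integrand of $\cW$ factor by factor, writing $\cW(g,f,\tau)=\int_M \Phi\, d\mu$ with $\Phi=\tau(R+|\nabla f|^2)+f$ and $d\mu=e^{-f}\tau^{-n}dV$, so that $\delta\cW=\int_M(\delta\Phi)\,d\mu+\int_M\Phi\,\delta(d\mu)$. The measure variation is immediate from $\delta dV=v\,dV$, $\delta e^{-f}=-h\,e^{-f}$ and $\delta\tau^{-n}=-n\sigma\tau^{-1}\tau^{-n}$, giving $\delta(d\mu)=(v-h-n\sigma\tau^{-1})\,d\mu$; this already supplies the factor $(v-h-n\sigma\tau^{-1})$ multiplying the second integral. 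For $\delta\Phi$ the only nontrivial ingredient is $\delta R$: since we vary only within K\"ahler metrics I would use $R_{i\bar j}=-\partial_i\partial_{\bar j}\log\det(g_{k\bar l})$ together with $\delta\log\det(g_{k\bar l})=g^{k\bar l}v_{k\bar l}=v$, so that $\delta R=\delta g^{i\bar j}R_{i\bar j}+g^{i\bar j}\delta R_{i\bar j}=-v_{j\bar i}R_{i\bar j}-\triangle v$. Likewise $\delta|\nabla f|^2=-v_{j\bar i}f_if_{\bar j}+h_if_{\bar i}+f_ih_{\bar i}$, while $\delta f=h$ and $\delta\tau=\sigma$.

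Substituting these into $\delta\Phi$ and collecting, the piece $\tau\,\delta R$ produces the Ricci contraction $-\tau v_{j\bar i}R_{i\bar j}$ directly. The remaining derivative terms, namely $-\tau\triangle v$ from $\delta R$ and $-\tau v_{j\bar i}f_if_{\bar j}+\tau(h_if_{\bar i}+f_ih_{\bar i})$ from $\delta|\nabla f|^2$, must be integrated by parts against the weight $e^{-f}\,dV$. Here I would invoke the K\"ahler (potential) hypothesis $v_{i\bar j}=\partial_i\partial_{\bar j}\psi$, equivalently the commutation $v_{,i}=v_{i\bar j,j}$, to rewrite the Laplacian of the trace as a double divergence $\triangle v=\nabla^i\nabla^{\bar j}v_{i\bar j}$. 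Integrating twice by parts and using $\nabla_k\nabla_{\bar l}e^{-f}=(-f_{k\bar l}+f_kf_{\bar l})e^{-f}$, the pair $-\tau\triangle v$ and $-\tau v_{j\bar i}f_if_{\bar j}$ collapses into the Hessian contraction $-\tau v_{j\bar i}f_{i\bar j}$ plus scalar multiples of $v$; combined with the Ricci term this gives exactly $-\tau v_{j\bar i}(R_{i\bar j}+f_{i\bar j})$. The cross terms $\tau(h_if_{\bar i}+f_ih_{\bar i})$ integrate by parts (using $\nabla_i(g^{i\bar j}f_{\bar j}e^{-f})=(\triangle f-|\nabla f|^2)e^{-f}$) into multiples of $h$ against $(2\triangle f-|\nabla f|^2+R)$-type expressions, which merge with the $\Phi\,\delta(d\mu)$ contribution to form the second integral.

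Finally I would tidy the scalar terms using the single global identity $\int_M(\triangle f-|\nabla f|^2)\,d\mu=0$, which follows from $\triangle e^{-f}=(|\nabla f|^2-\triangle f)e^{-f}$ and the vanishing of the integral of a Laplacian over the closed manifold. Since $\sigma=\delta\tau$ is a constant, this lets me interchange $|\nabla f|^2$ and $\triangle f$ in the coefficient of $\sigma$, producing the clean term $\sigma(R+\triangle f)$ in the first integral, and it lets me fold the analogous scalar-$v$ and scalar-$h$ remainders into the coefficient $(2\triangle f-|\nabla f|^2+R)$ of the second integral. Sorting the resulting terms into the three natural groups — the coefficient of $\sigma$, the tensor coefficient of $v_{j\bar i}$, and the scalar coefficient of $v-h-n\sigma\tau^{-1}$ — yields the stated formula.

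I expect the main obstacle to be the weighted integration-by-parts bookkeeping in the middle step: one must pair every gradient term correctly against the $e^{-f}$ weight, and in particular invoke the potential constraint $v_{i\bar j}=\partial_i\partial_{\bar j}\psi$ at precisely the right place so that the double divergence $\triangle v$ pairs with $f_{i\bar j}$ (and, through $\delta R$, with $R_{i\bar j}$) to produce the geometrically natural combination $R_{i\bar j}+f_{i\bar j}$ rather than a surviving divergence. Once the structural terms are identified, matching the precise coefficients and the lower-order $f$-terms is routine.
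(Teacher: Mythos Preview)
Your proposal is correct and follows essentially the same route as the paper: the same three variation formulas $\delta R=-v_{j\bar i}R_{i\bar j}-\triangle v$, $\delta|\nabla f|^2=-v_{j\bar i}f_if_{\bar j}+h_if_{\bar i}+f_ih_{\bar i}$, $\delta(d\mu)=(v-h-n\sigma\tau^{-1})d\mu$, the same three weighted integration-by-parts identities (your double-divergence treatment of $\triangle v$ is just a repackaging of the paper's separate identities for $\int\triangle v\,e^{-f}$ and $\int v_{j\bar i}f_{i\bar j}e^{-f}$, both using $v_{i\bar j,j}=v_{,i}$), and the same global identity $\int(\triangle f-|\nabla f|^2)e^{-f}=0$ to clean up the $\sigma$-coefficient.
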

\begin{proof}  Since we shall use this computation in the Sasakian setting, we include the computations as follows. 
We compute
\[
\begin{split}
&\delta R= -v_{j\bar i} R_{i\bar j}-\t v,\\
& \delta (g^{i\bar j}f_if_{\bar j})=-v_{j\bar i}f_if_{\bar j}+f_i h_{\bar i}+h_i f_{\bar i},\\
& \delta (\tau^{-n}e^{-f}dV)=(v-h-n\sigma \tau^{-1})\tau^{-n}e^{-f} dV,
 \end{split}
\]
where we use the following notation for tensors
\[
\langle v_{i\bar j}, R_{i\bar j}\rangle=v_{i\bar j} R_{k\bar l}g^{i\bar l}g^{k\bar j}=v_{\bar i j} R_{i\bar j}. 
\]
Hence we compute
\begin{equation}\label{E-3-3}
\begin{split}
\delta \cW=&\int_M\left (\sigma (R+f_if_{\bar i})+h\right)\tau^{-n}e^{-f}dV\\
&+\int_M\tau\left(- v_{j\bar i}(R_{i\bar j}+f_if_{\bar j})-\t v+f_i h_{\bar i}+h_i f_{\bar i}\right) \tau^{-n}e^{-f}dV\\
&+\int_M \left(\tau(R+f_if_{\bar i})+f\right)(v-h-n\sigma\tau^{-1})\tau^{-n}e^{-f}dV. 
\end{split} 
\end{equation}

We can compute, integration by parts, 
 \begin{subequations}\label{E-3-4}
\begin{align}
&\int_M \t v e^{-f}dV=\int_M v(\t e^{-f})dV=\int_M v(-\t f+f_if_{\bar i})e^{-f}dV,\label{E-3-4a}\\
&\int_M (f_ih_{\bar i}+h_if_{\bar i})e^{-f}dV=-2\int_M h(\t f-f_if_{\bar i})e^{-f}dV,\label{E-3-4b}\\
&\int_M v_{j\bar i}f_{i\bar j}e^{-f}dV= \int_Mv_{j\bar i}f_if_{\bar j}e^{-f}dV+\int_M v(\t f-f_if_{\bar i})e^{-f}dV.\label{E-3-4c}
\end{align}
\end{subequations}

Applying \eqref{E-3-4} to \eqref{E-3-3}, then straightforward computations show that
\[
\begin{split}
\delta W(g, f, \tau)=&\int_M \left(\sigma(R+\t f)-\tau v_{j\bar i}(R_{i\bar j}+f_{i\bar j})+h\right)\tau^{-n}e^{-f}dV\\
&+\int_M \left(\tau(2\t f-f_if_{\bar i}+R)+f\right)\left(v-h-n\sigma \tau^{-1}\right)\tau^{-n}e^{-f}dV,
\end{split}
\]
where we have uses the fact that
\[\int_M e^{-f}(\t f-f_{i}f_{\bar i})dV=0.\]
\end{proof}

If we choose
\[
v_{i\bar j}=\l g_{i\bar j}^T-(R_{i\bar j}+f_{i\bar j}), h=v-n\sigma \tau^{-1}=\l n-n\sigma \tau^{-1}-(R+\t f),
\]
where $\l$ is a constant in \eqref{E-3-2}. 
It then follows that
\[
\begin{split}
\delta \cW(g, f, \tau)&=\int_M\left( \tau |R_{i\bar j}+f_{i\bar j}|^2-(\l \tau -\sigma+1)(R+\t f)+\l n-n\sigma \tau^{-1}\right)\frac{e^{-f}}{\tau^n} dV\\
&=\int_M  \left|R_{i\bar j}+f_{i\bar j}-\frac{\l \tau-\sigma+1}{2\tau}g_{i\bar j}\right|^2\tau^{-n+1}e^{-f}dV-\frac{n}{4\tau}(\l \tau-\sigma-1)^2.
\end{split}
\]
We choose $\sigma=\l \tau-1$, then we get
\begin{equation}\label{E-3-5}
\delta \cW(g, f, \tau)=\int_M \left|R_{i\bar j}+f_{i\bar j}-\tau^{-1}g_{i\bar j}\right|^2 \tau^{-n+1} e^{-f}dV. 
\end{equation}

We then consider the following  evolution equations, which is the coupled K\"ahler-Ricci flow,
\begin{equation}\label{E-3-8}
 \left\{
 \begin{array}{cl}
\dfrac{\p g_{i\bar j}}{\p t}&=g_{i\bar j}-R_{i\bar j},\\
\\
\dfrac{\p f}{\p t}&=n\tau^{-1}-R-\t f+f_{i}f_{\bar i},\\
\\
\dfrac{\p\tau}{\p t}&=\tau-1.
\end{array}
\right.
\end{equation}

\begin{prop}\label{P-3-2}Under the evolution equations \eqref{E-3-8}, we have
\begin{equation}\label{E-3-9}
\frac{d\cW}{dt}=\int_M \left|R_{i\bar j}+f_{i\bar j}-\tau^{-1}g_{i\bar j}\right|^2 \tau^{-n+1} e^{-f}dV+\int_M |f_{ij}|^2 \tau^{-n+1}e^{-f}dV. 
\end{equation}
\end{prop}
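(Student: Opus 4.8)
The plan is to obtain $\frac{d}{dt}\cW(g(t),f(t),\tau(t))$ as the first variation $\delta\cW$ of Proposition \ref{P-3-1}, evaluated at the time derivatives prescribed by the coupled flow \eqref{E-3-8}, and then to organize the answer so that the term already computed in \eqref{E-3-5} splits off, leaving a remainder that I will identify with $\int_M|f_{ij}|^2\tau^{-n+1}e^{-f}dV$. Before invoking Proposition \ref{P-3-1} I would check its standing hypothesis $v_{i\bar j}=\p_i\p_{\bar j}\psi$: along the flow $v_{i\bar j}=g_{i\bar j}-R_{i\bar j}$, and locally $g_{i\bar j}-R_{i\bar j}=\p_i\p_{\bar j}\bigl(\phi+\log\det(g)\bigr)$ with $\phi$ a local K\"ahler potential, so the hypothesis holds and the identity $v_{i\bar j,j}=v_{,i}$ (equivalently the contracted Bianchi identity $\nabla^{\bar j}R_{i\bar j}=\nabla_iR$) is available.

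Reading off the flow, the variations are $\sigma=\tau-1$, $v_{i\bar j}=g_{i\bar j}-R_{i\bar j}$ (so $v=n-R$) and $h=n\tau^{-1}-R-\t f+f_if_{\bar i}$. Since $\delta\cW$ is linear in the triple $(\sigma,v_{i\bar j},h)$, I would split this triple as the sum of the \emph{optimal} triple
\[
\sigma^{O}=\tau-1,\qquad v^{O}_{i\bar j}=g_{i\bar j}-R_{i\bar j}-f_{i\bar j},\qquad h^{O}=n\tau^{-1}-R-\t f,
\]
which is exactly the choice $\l=1$, $\sigma=\l\tau-1$ used to derive \eqref{E-3-5} and for which $v^{O}-h^{O}-n\sigma^{O}\tau^{-1}=0$ (so the second integral of the variation formula vanishes and the constraint is preserved), and the remainder
\[
\sigma'=0,\qquad v'_{i\bar j}=f_{i\bar j},\qquad h'=f_if_{\bar i}.
\]
By linearity $\frac{d\cW}{dt}=\delta\cW[\sigma^{O},v^{O},h^{O}]+\delta\cW[\sigma',v',h']$, and the first summand is precisely the right-hand side of \eqref{E-3-5}, namely $\int_M|R_{i\bar j}+f_{i\bar j}-\tau^{-1}g_{i\bar j}|^2\tau^{-n+1}e^{-f}dV$. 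Thus the proposition reduces to showing $\delta\cW[0,f_{i\bar j},f_if_{\bar i}]=\int_M|f_{ij}|^2\tau^{-n+1}e^{-f}dV$.

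For this remainder I would substitute $\sigma=0$, $v_{i\bar j}=f_{i\bar j}$, $v=\t f$, $h=f_if_{\bar i}$, so that $v-h-n\sigma\tau^{-1}=\t f-f_if_{\bar i}$, into the variation formula and collect terms; all surviving pieces are $(1,1)$-type integrals built from $\langle f_{i\bar j},R_{i\bar j}\rangle$, $|f_{i\bar j}|^2$, $\t f$, $f_if_{\bar i}$ and $R$, and none of them is the purely holomorphic Hessian norm $|f_{ij}|^2$. This term has to be manufactured by a Bochner-type integration by parts: integrating $\int_M|f_{ij}|^2e^{-f}dV=\int_M f_{ij}f_{\bar i\bar j}e^{-f}dV$ by parts in an antiholomorphic slot gives
\[
\int_M|f_{ij}|^2e^{-f}dV=-\int_M\bigl(g^{j\bar k}\nabla_{\bar k}f_{ij}\bigr)f^{i}e^{-f}dV+\int_M f_{ij}f^{i}f^{j}e^{-f}dV,\qquad f^{i}=g^{i\bar k}f_{\bar k},
\]
after which I would commute the covariant derivatives through the K\"ahler curvature identity, which replaces $g^{j\bar k}\nabla_{\bar k}f_{ij}$ by $\nabla_i\t f$ plus a Ricci--gradient contraction $R_{i\bar m}f^{\bar m}$ (up to the curvature sign convention). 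This re-expresses $\int_M|f_{ij}|^2e^{-f}dV$ entirely through the $(1,1)$-quantities collected from the variation, and feeding it back matches the remainder.

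The main obstacle is precisely this last step: extracting the positive $(2,0)$-term $|f_{ij}|^2$ from a collection of $(1,1)$-type integrals. The delicate point is the bookkeeping of the curvature contributions -- the Ricci--Hessian term $\langle f_{i\bar j},R_{i\bar j}\rangle$ coming from the variation must be integrated by parts using the second Bianchi identity, and the resulting Ricci--gradient terms $R_{i\bar j}f^if^{\bar j}$ together with the $\nabla R\!\cdot\!\nabla f$ terms have to cancel exactly against those produced by commuting derivatives in the Bochner identity, so that only $|f_{ij}|^2$ survives. Throughout I would track the weighted integrations by parts using $\int_M(\t f-f_if_{\bar i})e^{-f}dV=0$ and the identities \eqref{E-3-4}; everything else -- linearity, the reduction to \eqref{E-3-5}, and checking admissibility of the Ricci-flow variation -- is routine.
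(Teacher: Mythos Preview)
Your proposal is correct and ultimately rests on the same core identity as the paper, but the organization differs in a way worth noting. The paper substitutes the flow directly into \eqref{E-3-2}, arriving at \eqref{E-3-10}, and then isolates the square $|R_{i\bar j}+f_{i\bar j}-\tau^{-1}g_{i\bar j}|^2$ by hand; what remains is exactly the content of \eqref{E-3-11} and the claim \eqref{E-3-12}. You instead exploit linearity of $\delta\cW$ in the triple $(\sigma,v_{i\bar j},h)$ to peel off the optimal variation that already produced \eqref{E-3-5}, so the square appears for free and you are left with computing $\delta\cW[0,f_{i\bar j},f_if_{\bar i}]$. After using \eqref{E-3-11} to dispose of the stray $f$-term, this remainder is \emph{precisely} the rearranged claim \eqref{E-3-12}, and your Bochner-type integration by parts (commute $\nabla_{\bar k}f_{ij}$ via the Ricci identity, then integrate) is exactly the content of \eqref{E-3-13}--\eqref{E-3-16}. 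So the two routes converge on the same key lemma; your splitting is a cleaner way to see \emph{why} the square appears, while the paper's direct computation makes the bookkeeping of the curvature cancellations more explicit. One small caution: the second integral in \eqref{E-3-2} as printed is missing the factor $\tau(\cdots)+f$ (compare \eqref{E-3-10}); your argument is unaffected for the optimal piece since $v^{O}-h^{O}-n\sigma^{O}\tau^{-1}=0$ kills that line anyway, but for the remainder you should use the corrected form to recover \eqref{E-3-11} and the $\tau$ in front of $|f_{ij}|^2$.
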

\begin{proof}
This is Perelman's  formula on K\"ahler manifolds. 
 We will include the computations here since the similar computations are needed for the Sasakian case. By \eqref{E-3-2} and \eqref{E-3-8}, we compute
 \begin{equation}\label{E-3-10}
 \begin{split}
 \frac{d\cW}{dt}=& \int_M \left((\tau-1)(R^T+\t f)+\tau(R_{i\bar j}-g_{i\bar j})(R_{\bar i j}+f_{\bar i j})\right)\tau^{-n}e^{-f}dV\\
 &+\int_M \left(n\tau^{-1}-R-\t f+f_if_{\bar i}\right)\tau^{-n}e^{-f}dV\\
 &+\int_M \left(\tau\left(2\t f-f_if_{\bar i}+R \right)+f\right)(\t^T f-f_if_{\bar i})\tau^{-n} e^{-f}dV\\
 =&\int_M  \left|R_{i\bar j}+f_{i\bar j}-\tau^{-1}g_{i\bar j}\right|^2\tau^{-n+1}e^{-f}dV\\
 &-\int_M f_{i\bar j} (R_{\bar i j}+f_{\bar i j})\tau^{-n+1} e^{-f}dV+\int_M f_if_{\bar i}\tau^{-n}e^{-f} dV\\
 &+\int_M \left(\tau\left(2\t f-f_if_{\bar i}+R\right)+f\right)(\t f-f_if_{\bar i})\tau^{-n} e^{-f}dV. 
 \end{split}
 \end{equation}
 We then observe that
 \begin{equation}\label{E-3-11}
 \begin{split}
 \int_M f (\t f-f_if_{\bar i}) e^{-f} dV&=-\int_M f\t (e^{-f})dV\\
 &=-\int_M \t f e^{-f} dV\\
 &=-\int_M f_i f_{\bar i}e^{-f} dV.
 \end{split}
 \end{equation}
 We  claim that
 \begin{equation}\label{E-3-12}
\begin{split}
 \int_M f_{i\bar j} (R_{\bar i j}+f_{\bar i j})e^{-f}dV=&\int_M \left(2\t f-f_if_{\bar i}+R\right)(\t  f-f_if_{\bar i}) e^{-f} dV\\
&-\int_M |f_{ij}|^2e^{-f} dV. 
\end{split}
 \end{equation}
 The proof is complete by \eqref{E-3-10}, \eqref{E-3-11} and \eqref{E-3-12}. 
 To prove \eqref{E-3-12}, we compute, integration by parts,
 \begin{equation}\label{E-3-13}
 \begin{split}
&\int_M f_{i\bar j} (R_{\bar i j}+f_{\bar i j})e^{-f}dV=\int_M \left(-f_i (R_{\bar i j}+f_{\bar i j})_{,\bar j} + f_i f_{\bar j}(R_{\bar i j}+f_{\bar i j})\right)e^{-f}dV\\
 &=\int_M\left(-f_i (R_{,\bar i} +(\t f)_{\bar i})+ f_i f_{\bar j}(R_{\bar i j}+f_{\bar i j})\right)e^{-f}dV\\
 &=\int_M\left(\t f-f_if_{\bar i})(R+\t f)+ f_i f_{\bar j} (R_{\bar i j}+f_{\bar i j})\right)e^{-f}dV. 
 \end{split}
 \end{equation}
 We can then compute that
 \begin{equation}\label{E-3-14}
 \int_M f_i f_{\bar j} f_{\bar i j}e^{-f}dV=\int_M \left(-f_{ij}f_{\bar j}f_{\bar i}- \t f|\nabla f|^2+ |\nabla f|^4\right)e^{-f}dV.
 \end{equation}
 We also have
 \begin{equation}\label{E-3-15}
 R_{\bar i j} f_i=f_{ij\bar i}-f_{i\bar i j}.
 \end{equation}
 Hence we get that
 \begin{equation}\label{E-3-16}
 \begin{split}
 \int_M f_if_{\bar j} R_{\bar i j}e^{-f}dV&=\int_M f_{\bar j}(f_{ij\bar i}-f_{i\bar i j})e^{-f}dV\\
 &=\int_M \left(-|f_{ij}|^2+ f_{ij}f_{\bar i}f_{\bar j}+(\t f)^2-\t f |\nabla f|^2\right)e^{-f}dV.
 \end{split}
 \end{equation}
 By \eqref{E-3-14} and \eqref{E-3-16}, we get hat
 \[
 \int_{M} f_i f_{\bar j} (R_{\bar i j}+f_{\bar i j})e^{-f}dV=\int_M \left((\t f-|\nabla f|^2)^2-|f_{ij}|^2\right)e^{-f}dV.
 \]
 This with \eqref{E-3-13} proves the claim. 
 \end{proof}

From \eqref{E-3-9}, it follows directly that  $\cW$ is strictly increasing along the evolution equations \eqref{E-3-8} unless
\[
R_{i\bar j}+f_{i\bar j}-\tau^{-1}g_{i\bar j}=0, f_{ij}=0,
\] 
which implies that $\nabla f$ is a (real) holomorphic vector field and $g$ is a K\"ahler-Ricci (gradient-shrinking) soliton. 

Now we consider that $(M, \xi, \eta, \Phi, g)$ is a $2n+1$ dimensional Sasakian manifold. Let
\begin{equation}\label{E-3-17}
\cW(g, f, \tau)=\int_M \left(\tau(R^T+|\nabla f|^2)+f\right)e^{-f} dV, 
\end{equation}
where $f\in C^\infty_B(M)$ (namely $df(\xi)$=0) and satisfies
\begin{equation}\label{E-3-scale}
\int_M \tau^{-n}e^{-f}dV=1. 
\end{equation}
For $f$ basic, $|\nabla f|^2=|\nabla^T f|^2=g^{i\bar j}_Tf_i f_{\bar j}$. 
Recall that $R^T$ is the transverse scalar curvature and satisfies
\[
R^T=R+2n. 
\]
So $\cW$ functional in \eqref{E-3-17} can be viewed as Perelman's  $\cW$ functional restricted on basic functions and transverse Kahler structure. 
Note that the normalization condition \eqref{E-3-scale} is not exactly the same as in Perelman's $\cW$ functional; in particular, $\cW$ functional is not scaling invariant and is not invariant under $D$-homothetic deformation either. 

When $f$ is basic, one can see that the integrand in \eqref{E-3-17} is only involved  with the transverse K\"ahler structure. Hence when we compute the first variation of $\cW$, all the local computations are just mimic the computations as in the K\"ahler case. Under the Sasaki-Ricci flow, the Reeb vector field and the transverse holomorphic structure are both invariant, and the metrics are bundle-like; we shall see that when one applies integration by parts, the expressions involved behave essentially the same as in the K\"ahler case. Hence we would get the similar monotonicity for $\cW$ functional along the Sasaki-Ricci flow. 

One can choose a local coordinate chart $(x, z_1, \cdots, z_n)$ on a small neighborhood $U$ such that  (see \cite{GKN} for instance)
\begin{equation}\label{A-3-1}
\begin{split}
\xi&=\p_x\\
 \eta&=dx-\i \left(G_j dz_j-G_{\bar j} d z_{\bar j}\right)\\
 \Phi&=\i \left(X_j\otimes dz_j-X_{\bar j}d z_{\bar j}\right)\\
 g&=\eta\otimes \eta+g^T_{i\bar j}dz_i\otimes dz_{\bar j}, 
 \end{split}
 \end{equation}
where $G: U\rightarrow \R$ is a (local) real basic function such that $\p_xG=0$, and we use the notations $G_i=\p_i G, G_{\bar j}=\p_{\bar j}G$, $X_j=\p_i+\i G_i\p_x, X_{\bar j}=\bar X_i$,  and $g^T_{i\bar j}=2G_{i\bar j}=2\p_i\p_{\bar j}G$. Then $\cD\otimes \C$ is spanned by $\{X_i, X_{\bar j}\}, 1\leq i, j\leq n$. It is clear that
\[
\Phi X_i=\i X_i, \Phi X_{\bar j}=-\i X_{\bar j}. 
\]
We can also compute that
\[
[X_i, X_j]=[X_{\bar i}, X_{\bar j}]=[\xi, X_i]=[\xi, X_{\bar j}]=0
\]
and
\[
[X_i, X_{\bar j}]=-2\i G_{i\bar j}\p_x. 
\]
The transverse K\"ahler metric is given by $g^T=g_{i\bar j}^Tdz_i\otimes dz_{\bar j}$, where $g^T_{i\bar j}=g^T(\p_i,  \p_j)=g^T(X_i, X_j)=2G_{i\bar j}$. We also use $g^{i\bar j}_T$ to denote the inverse of $g^T_{i\bar j}$.

We consider that the variation of the Sasakian structure which fixes $\xi$ and the transverse holomorphic structure such that $\delta g^T_{i\bar j} =v_{i\bar j}=v(X_i,  X_j)=v(\p_i,  \p_j)$, and there is a basic function $\psi$ such that $v_{i\bar j}=\psi_{i\bar j}$. Let $v=g_T^{i\bar j}v_{i\bar j}$. Hence we have $v_{i\bar j}, _{j}=v_{, i}$ ( this holds if  there exists a local basic function $\psi$ such that $v_{i\bar j}=\psi_{i\bar j}$). Suppose $\delta f=h\in C^\infty_B(M), \delta \tau=\sigma$, then 
\begin{prop}\label{P-3-3}The first variation of $\cW$ on Sasakian manifolds is given by
\begin{equation}\label{E-3-18}
\begin{split}
\delta W(g, f, \tau)=&\int_M \left(\sigma(R^T+\t^T f)-\tau v_{j\bar i}(R^T_{i\bar j}+f_{i\bar j})+h\right)\tau^{-n}e^{-f}dV\\
&+\int_M \left(2\t^T f-|\nabla^T f|^2+R^T\right)\left(v-h-n\sigma \tau^{-1}\right)\tau^{-n}e^{-f}dV.
\end{split}
\end{equation}\end{prop}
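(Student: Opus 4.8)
The plan is to repeat the proof of Proposition~\ref{P-3-1} in the transverse setting, since under the standing hypotheses the entire integrand of \eqref{E-3-17} is built from the transverse K\"ahler data and from basic functions. I would work in the adapted chart \eqref{A-3-1}, where $\xi=\p_x$, $g^T_{i\bar j}=2G_{i\bar j}$, and $R^T_\alpha$ is literally the K\"ahler curvature of $g^T_\alpha$ on the local quotient $V_\alpha$. Because the variation fixes $\xi$ and the transverse holomorphic structure and has the form $\delta g^T_{i\bar j}=v_{i\bar j}=\psi_{i\bar j}$ with $\psi$ basic, on each $V_\alpha$ it is exactly a K\"ahler deformation of $g^T_\alpha$, so the three pointwise variation formulas from the K\"ahler proof transcribe directly with $R\to R^T$, $\t\to\t^T$, $g_{i\bar j}\to g^T_{i\bar j}$: namely $\delta R^T=-v_{j\bar i}R^T_{i\bar j}-\t^T v$, $\delta(g^{i\bar j}_T f_i f_{\bar j})=-v_{j\bar i}f_i f_{\bar j}+f_i h_{\bar i}+h_i f_{\bar i}$, and $\delta(\tau^{-n}e^{-f}dV)=(v-h-n\sigma\tau^{-1})\tau^{-n}e^{-f}dV$. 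The last uses $\delta dV=v\,dV$, which follows because \eqref{A-3-1} makes $dV$ equal to $\det(g^T_{i\bar j})$ times the fixed coordinate volume in $(x,z)$; only the transverse determinant varies, giving $\delta\log\det(g^T_{i\bar j})=v$.

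Substituting these into $\delta\cW$ reproduces the exact transverse analog of \eqref{E-3-3}. The only genuinely Sasakian input is the integration by parts needed to pass to \eqref{E-3-18}, i.e. the transverse versions of \eqref{E-3-4a}, \eqref{E-3-4b}, \eqref{E-3-4c}. All three reduce to one statement: on basic functions the transverse Laplacian $\t^T$ is formally self-adjoint with respect to $dV$, and $\int_M\t^T u\,dV=0$ for $u\in C^\infty_B(M)$. I would deduce this from the \emph{bundle-like} structure of $g$: because $\xi$ is Killing and $L_\xi$ annihilates every basic quantity, the volume factors globally as $dV=\eta\wedge dV_T$ with $dV_T$ the transverse volume, and integration of a transverse divergence against $dV$ produces no contribution along the leaves of $\cF_\xi$ (equivalently $\t^T u=-\t_{\bar\p}u$ lies in the image of the basic $\bar\p$-Laplacian, so it integrates to zero by transverse Hodge theory). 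Granting self-adjointness, \eqref{E-3-4a}--\eqref{E-3-4c} hold with $\t\to\t^T$, using the given relation $v_{i\bar j,j}=v_{,i}$, exactly as in the K\"ahler case.

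With these identities the remaining algebra is identical to the computation following \eqref{E-3-4}: collecting the $\t^T v$, $f_i h_{\bar i}+h_i f_{\bar i}$, and $v_{j\bar i}f_{i\bar j}$ terms and invoking $\int_M e^{-f}(\t^T f-f_i f_{\bar i})\,dV=0$ yields \eqref{E-3-18}. Thus the main obstacle is not the algebra, which is forced to agree with Proposition~\ref{P-3-1}, but the rigorous justification that integration by parts over the $(2n+1)$-dimensional manifold $M$ behaves like integration over a closed K\"ahler manifold, i.e. that no boundary term survives in the Reeb direction. This is precisely where the Sasakian structure enters, through $L_\xi(\cdot)=0$ on basic tensors and the splitting $dV=\eta\wedge dV_T$; once the self-adjointness of $\t^T$ on basic functions is secured, the proposition follows from the K\"ahler computation.
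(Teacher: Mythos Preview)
Your overall strategy coincides with the paper's: the pointwise variation formulas are transcribed verbatim from the K\"ahler computation, and the only new content is justifying the three integration-by-parts identities \eqref{E-3-4a}--\eqref{E-3-4c} on the Sasakian manifold. Your reduction of \eqref{E-3-4a} and \eqref{E-3-4b} to self-adjointness of $\t^T$ on basic functions is fine (and in fact immediate, since for basic $u$ one has $\t^T u=\t u$, so self-adjointness of the full Laplacian already gives it).

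Where your write-up is slightly loose is the claim that \eqref{E-3-4c} \emph{also} reduces to self-adjointness of $\t^T$ on basic functions. It does not: \eqref{E-3-4c} requires moving a transverse covariant derivative off the $(1,1)$-tensor $v_{i\bar j}$ onto $\p\bar\p(e^{-f})$, i.e.\ integration by parts at the level of basic $(1,1)$-forms, not just scalars. The relation $v_{i\bar j,j}=v_{,i}$ only becomes useful \emph{after} that step is justified. Your parenthetical about ``integration of a transverse divergence against $dV$'' is the right principle, but you should recognize that it is a genuinely stronger statement than scalar self-adjointness; it is exactly what the paper records as Proposition~\ref{P-3-i}. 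The paper itself does not invoke Proposition~\ref{P-3-i} here: instead it proves \eqref{E-3-4c} directly by writing the integrand as the pairing of two basic $(1,1)$-forms and applying the transverse K\"ahler identity $[\Lambda,\bar\p]=\i\p^{*}$ from the basic Hodge theory. That route avoids setting up tensor-level integration by parts from scratch and is worth knowing as an alternative to your divergence argument.
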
 
\begin{proof}The proof is almost identical to Proposition \ref{P-3-1}. First of all, since the integrand involves only the transverse K\"ahler structure,  the local computations are exactly the same  in terms of transverse K\"ahler structure. For example, the variation of the transverse scalar curvature takes the form of 
\[
\begin{split}
\delta R^T&=\delta (g_T^{i\bar j}R^T_{i\bar j})= -\delta (g_T^{i\bar j}\p_i\p_{\bar j}\log \det (g_{i\bar j})^T)\\
&=-\t^T v-\langle v_{i\bar j}, R_{i\bar j}^T\rangle_{g^T}. 
\end{split}
\]
Hence we need to check that integration by parts as in \eqref{E-3-4} also holds with the similar formula in the Sasakian case. 
We shall give a proof of \eqref{E-3-4c}; while \eqref{E-3-4a} and \eqref{E-3-4b} are quite obvious.
To verify \eqref{E-3-4c}, we write the integrand as the inner product of two basic forms (see \eqref{E-9-i2} for the inner product of two basic forms),
\[
\int_M v_{i\bar j}f_{j\bar i}e^{-f}dV= (  v_{i\bar j}dz_{i}\wedge dz_{\bar j},  e^{-f}\p\bar \p f). 
\]
Let $\Phi=   v_{i\bar j}dz_{i}\wedge dz_{\bar j}$ and $\Psi= \p\bar \p h$ be two basic forms. 
We have
\begin{equation}
\ (\Phi, \Psi) =\i (\Lambda \Phi,  \t_{\bar \p} h). 
\end{equation}
Actually, using \eqref{E-kahb}, we can get 
\begin{equation}
\begin{split}
(\Lambda \Phi,  \t_{\bar \p} h) &=(\Lambda \Phi, \bar \p^{*}\bar\p h)\\
&= (\bar \p \Lambda \Phi, \bar \p h)\\
&=(\Lambda \bar \p\Phi- [\Lambda, \bar \p]\Phi, \bar \p h)\\
&= (-\i \p^{*}\Phi, \bar \p h)\\
&=-\i(\Phi,  \p\bar\p h)=-\i (\Phi, \Psi). 
\end{split}
\end{equation}

Note that $\p\bar \p (e^{-f})= -\p\bar \p f e^{-f}+ \p f \wedge \bar\p f e^{-f}$, we have
\begin{equation}
\begin{split}
\int_M v_{i\bar j}f_{j\bar i}e^{-f}&=\int_M (\Phi, - \p\bar \p (e^{-f}))dV+\int_M (\Phi,  \p f\wedge \bar \p f e^{-f})dV\\
&=\int_M (\i \Lambda \Phi, \t_{\bar \p} (e^{-f}))dV+\int_M v_{i\bar j} f_{\bar i}f_{j} e^{-f }dV\\
&=\int_M (\i \Lambda \Phi, -\t^T (e^{-f}))dV+\int_M v_{i\bar j} f_{\bar i}f_{j} e^{-f }dV\\
&=\int_M v (\t^T f-|\nabla^T f|^2)e^{-f} dV+\int_M v_{i\bar j} f_{\bar i}f_{j} e^{-f }dV,
\end{split}
\end{equation}
where we use the fact that $\i \Lambda \Phi=\Lambda (\i \Phi)=g^{i\bar j}_T v_{i\bar j}=v$ and $\t^T=-\t_{\bar \p}$ for basic functions. 
\end{proof}

We then consider the $\cW$ functional on the coupled Sasaki-Ricci flow
\begin{equation}\label{E-3-20}
 \left\{
 \begin{array}{cl}
\dfrac{\p g_{i\bar j}}{\p t}&=g^T_{i\bar j}-R^T_{i\bar j},\\
\\
\dfrac{\p f}{\p t}&=n\tau^{-1}-R^T-\t f+f_{i}f_{\bar i},\\
\\
\dfrac{\p\tau}{\p t}&=\tau-1.
\end{array}
\right.
\end{equation}
The evolution equation for $f$ in \eqref{E-3-20} is actually a {\it backward heat equation} where the quantities are computed by the metric $g$ at time $t$. Note that the Sasaki-Ricci flow always has a long time solution. For any time $T$, if  $f(T)\in C^\infty_B(M)$, we shall prove that the backward heat equation for $f$ in \eqref{E-3-20} has a unique smooth solution $f: [0, T]\rightarrow C^\infty_B(M)$ (see Proposition \ref{P-3-5} below).   Under the coupled Sasaki-Ricci flow \eqref{E-3-20}, we have
\begin{prop} \label{P-3-4}Under the coupled Sasaki-Ricci flow \eqref{E-3-20}, then $\cW$ functional on Sasakian manifolds satisfies
\begin{equation}\label{E-3-21}
\frac{d\cW}{dt}=\int_M \left|R^T_{i\bar j}+f_{i\bar j}-\tau^{-1}g^T_{i\bar j}\right|^2 \tau^{-n+1} e^{-f}dV+\int_M |f_{ij}|^2 \tau^{-n+1}e^{-f}dV. 
\end{equation}
\end{prop}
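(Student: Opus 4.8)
The plan is to imitate the proof of Proposition~\ref{P-3-2} line for line, replacing every K\"ahler quantity by its transverse counterpart. First I would feed the coupled Sasaki-Ricci flow \eqref{E-3-20} into the first variation formula \eqref{E-3-18} of Proposition~\ref{P-3-3}, setting $\sigma=\tau-1$, $v_{i\bar j}=g^T_{i\bar j}-R^T_{i\bar j}$ (so that $v=g_T^{i\bar j}v_{i\bar j}=n-R^T$), and $h=n\tau^{-1}-R^T-\t f+f_if_{\bar i}$. Since $f$ is basic, the Reeb direction contributes nothing (recall the integral curves of $\xi$ are geodesics and $\xi f=0$), so $\t f=\t^T f$ and $f_if_{\bar i}=|\nabla^T f|^2$; thus these are legitimate basic variations and Proposition~\ref{P-3-3} applies. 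A short computation gives $v-h-n\sigma\tau^{-1}=\t^T f-|\nabla^T f|^2$, exactly as in the K\"ahler case, and collecting terms produces the transverse analogue of the intermediate identity \eqref{E-3-10}, with $R$, $R_{i\bar j}$, $\t$ everywhere replaced by $R^T$, $R^T_{i\bar j}$, $\t^T$.

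The remaining work is to prove the two integration-by-parts identities that reduce this intermediate formula to \eqref{E-3-21}: the scalar identity \eqref{E-3-11}, namely $\int_M f(\t^T f-|\nabla^T f|^2)e^{-f}dV=-\int_M f_if_{\bar i}e^{-f}dV$, and the Bochner-type claim \eqref{E-3-12}, which after substitution reads
\[
\int_M f_{i\bar j}(R^T_{\bar i j}+f_{\bar i j})e^{-f}dV=\int_M (2\t^T f-|\nabla^T f|^2+R^T)(\t^T f-|\nabla^T f|^2)e^{-f}dV-\int_M|f_{ij}|^2e^{-f}dV.
\]
I would establish this through the chain \eqref{E-3-13}--\eqref{E-3-16}. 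The only genuinely local input is the Ricci commutation identity \eqref{E-3-15}, $R^T_{\bar i j}f_i=f_{ij\bar i}-f_{i\bar i j}$; this holds verbatim for the transverse connection $\nabla^T$ and the transverse curvature $R^T$, because on each foliation chart $V_\alpha$ the pair $(g^T_\alpha,\nabla^T_\alpha)$ is a genuine K\"ahler metric together with its Levi-Civita connection, so the chartwise computation is literally the K\"ahler one.

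The main obstacle, and the only point where the Sasakian setting demands real care, is the passage from these local transverse identities to the global integrals over $M$: on a K\"ahler manifold the integrations by parts underlying \eqref{E-3-4}, \eqref{E-3-13}, \eqref{E-3-14} and \eqref{E-3-16} are just the divergence theorem, whereas here one integrates basic functions and basic $(1,1)$-forms against the Sasakian volume $dV=\eta\wedge(\tfrac12 d\eta)^n$. I would dispose of this exactly as in the proof of Proposition~\ref{P-3-3}: rewrite each integrand as an $L^2$ inner product of basic forms and invoke the transverse K\"ahler identities \eqref{E-kaha}--\eqref{E-kahb}, the relation $\t^T=-\t_{\bar \p}$ on basic functions, and the self-adjointness of $\t^T$ in the basic inner product. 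Because $\xi$ is Killing and every integrand is basic, integrating along the Reeb orbits only contributes a harmless constant factor, so each transverse integration by parts collapses to its K\"ahler model. One must also use the fact that the backward heat flow keeps $f$ inside $C^\infty_B(M)$, so that all quantities remain basic along the flow; granting this, combining the transverse versions of \eqref{E-3-10}, \eqref{E-3-11} and \eqref{E-3-12} yields \eqref{E-3-21}.
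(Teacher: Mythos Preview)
Your proposal is correct and follows exactly the architecture of the paper's proof: feed \eqref{E-3-20} into Proposition~\ref{P-3-3}, obtain the transverse version of \eqref{E-3-10}, and then reduce via the transverse analogues of \eqref{E-3-11} and \eqref{E-3-12} using the commutation identity \eqref{E-3-15} for $\nabla^T$. The one place the paper is more explicit than your sketch is the justification of the integrations by parts behind \eqref{E-3-13}--\eqref{E-3-16}. You propose to reuse the transverse K\"ahler identities \eqref{E-kaha}--\eqref{E-kahb} as in the proof of Proposition~\ref{P-3-3}, but those identities operate on basic \emph{forms}, whereas the quantities $f_{ij}=\nabla^T_i\nabla^T_j f$ appearing in \eqref{E-3-16} are components of a symmetric basic tensor and are not packaged by $\partial_B,\bar\partial_B$. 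The paper therefore proves a separate general lemma (Proposition~\ref{P-3-i}): for basic tensors one builds the global vector field $X=g^{i\bar j}_T\nabla^T_{\bar j}\phi_{A\bar B}\,\overline{\psi_{C\bar D}}\,g^{A\bar C}_Tg^{B\bar D}_T\,\partial_{z_i}$ and applies Stokes to $\iota_X dV$, which yields the needed transverse divergence theorem in one stroke. Your heuristic that ``integrating along the Reeb orbits only contributes a harmless constant factor'' is the right intuition, and Proposition~\ref{P-3-i} is precisely its rigorous form; with that lemma in hand, your argument and the paper's coincide.
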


\begin{proof}Note that we assume that the function $f$ is basic. The computations then follow from Proposition \ref{P-3-3} and the proof as in Proposition \ref{P-3-2} once we show the corresponding formula of integration by parts as in \eqref{E-3-13}, \eqref{E-3-14} and \eqref{E-3-16} hold. All local computations are the same since the integrand involves only transverse K\"ahler structure.  We then finish our proof by showing the following general formula of integration by parts on Sasakian manifolds (see Proposition \ref{P-3-i} below). 
\end{proof}

Actually one can prove some general formula of integration by parts on Sasakian manifolds when the integrand is involved with basic functions and basic forms. The formula in Proposition \ref{P-3-i} implies that when the integrand involved only the transverse K\"ahler structure, basic functions and forms, integration by parts takes the formula as in the K\"ahler setting, with the corresponding terms involved with the K\"ahler metric replaced by the transverse K\"ahler metric. Hence Proposition \ref{P-3-3} and \ref{P-3-4} can be proved directly using the computations in K\"ahler setting (see Proposition \ref{P-3-1} and \ref{P-3-2}) and Proposition \ref{P-3-i} below. 

Let $\phi$ be a basic $(p, q)$ form ($\phi\in \Omega^{p, q}_B$) such that $\phi=\phi_{A\bar B}dz_{A}\wedge dz_{\bar B}, |A|=p, |B|=q$, where $A, B$  are  multi-index such that $A=\alpha_1\alpha_2\cdots \alpha_p$, $B=\beta_1\cdots \beta_q$ and $dz_A=dz_{\alpha_1}\wedge \cdots \wedge dz_{\alpha_p}$, $dz_{\bar B}= dz_{\bar \beta_1}\wedge\cdots \wedge dz_{\bar \beta_q}$. Then  the tensor
\[
g^{i\bar j}_T\nabla_i^T\nabla_{\bar j}^T \phi=g^{i\bar j}_T\nabla_i^T\nabla^T_{\bar j} \phi_{A\bar B} dz_{A}\wedge dz_{\bar B}
\]
is well defined and it is still a basic form. Again we can see this easily from the description of the transverse K\"ahler structure in terms of Haefliger cocycles. Then we have, 
\begin{prop}\label{P-3-i}Let $\phi$ and $\psi$ be two basic $(p, q)$ forms, then we have the following formula of integration by parts,
\begin{equation}\label{A-3-5}
\int_M g^{i\bar j}_T\nabla_i^T\nabla^T_{\bar j} \phi_{A\bar B} \overline{ \psi_{C\bar D}} g^{A\bar C}_T g^{B\bar D}_T dV=-\int_M g^{i\bar j}_T\nabla_{\bar j}^T\psi_{A\bar B} \overline{ \nabla_{\bar i}^T\psi_{C\bar D}} g^{A\bar C}_Tg^{B\bar D}_T dV. 
\end{equation}
\end{prop}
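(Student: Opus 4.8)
The plan is to reduce the identity \eqref{A-3-5} to the single statement that the integral over $M$ of a transverse divergence vanishes, and then to prove that vanishing by comparing the transverse divergence with the ordinary Riemannian divergence on the closed manifold $M$. Concretely, given the two basic $(p,q)$-forms $\phi,\psi$, I would set
\[
W_{\bar j}=\left(\nabla_{\bar j}^T\phi_{A\bar B}\right)\overline{\psi_{C\bar D}}\, g^{A\bar C}_T g^{B\bar D}_T ,
\]
which are the components of a basic $(0,1)$-form $W=W_{\bar j}\,dz_{\bar j}$: it is basic because $\nabla^T$ of a basic form is basic and contractions with $g^T$ of basic tensors are basic. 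Since the transverse connection is metric, $\nabla^T g^T=0$, the metric factors pass through $\nabla^T_i$, and using $\nabla^T_i\overline{\psi_{C\bar D}}=\overline{\nabla^T_{\bar i}\psi_{C\bar D}}$ the Leibniz rule gives
\[
g^{i\bar j}_T\nabla^T_i W_{\bar j}=g^{i\bar j}_T\left(\nabla^T_i\nabla^T_{\bar j}\phi_{A\bar B}\right)\overline{\psi_{C\bar D}}\,g^{A\bar C}_T g^{B\bar D}_T+g^{i\bar j}_T\left(\nabla^T_{\bar j}\phi_{A\bar B}\right)\overline{\nabla^T_{\bar i}\psi_{C\bar D}}\,g^{A\bar C}_T g^{B\bar D}_T .
\]
Integrating over $M$, the identity \eqref{A-3-5} is therefore equivalent to the claim
\[
(\ast)\qquad \int_M g^{i\bar j}_T\nabla^T_i W_{\bar j}\,dV=0\quad\text{for every basic }(0,1)\text{-form }W .
\]

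To prove $(\ast)$ I would pass from the complex transverse divergence to a real one. Writing $V=g^{i\bar j}_T W_{\bar j}\,\p_i$, a section of $\cD\otimes\C$ of type $(1,0)$, the two vector fields $X=V+\bar V$ and $X'=\i(V-\bar V)$ are real, lie in $\Gamma(\cD)$, and are basic; since $\nabla^T g^T=0$ one has $g^{i\bar j}_T\nabla^T_i W_{\bar j}=\nabla^T_i V^i$, and a direct computation for a transverse K\"ahler metric yields
\[
\mathrm{div}^T X=2\,\mathrm{Re}\left(\nabla^T_i V^i\right),\qquad \mathrm{div}^T X'=-2\,\mathrm{Im}\left(\nabla^T_i V^i\right).
\]
Hence $(\ast)$ follows as soon as we know that $\int_M\mathrm{div}^T X\,dV=0$ for every real basic $X\in\Gamma(\cD)$.

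The heart of the matter, which I expect to be the main obstacle, is to identify this transverse divergence with the ordinary Riemannian one, i.e.\ to show that the Reeb direction contributes nothing. In a local orthonormal frame $\{\xi,e_i\}$ with $e_i\in\cD$ I would expand $\mathrm{div}_g X=g(\nabla_\xi X,\xi)+\sum_i g(\nabla_{e_i}X,e_i)$. The Reeb term vanishes because $g(X,\xi)=\eta(X)=0$ and $\nabla_\xi\xi=0$ (the orbits of $\xi$ are geodesics), so $g(\nabla_\xi X,\xi)=\xi\,g(X,\xi)-g(X,\nabla_\xi\xi)=0$; in the remaining sum the $\xi$-component of $\nabla_{e_i}X$ is killed by pairing with $e_i\in\cD$, so by the definition \eqref{E-3} of $\nabla^T$ and $g|_{\cD}=g^T$ from \eqref{E-2} one gets $\sum_i g(\nabla_{e_i}X,e_i)=\sum_i g^T(\nabla^T_{e_i}X,e_i)=\mathrm{div}^T X$. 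Thus $\mathrm{div}^T X=\mathrm{div}_g X$, and $\int_M\mathrm{div}_g X\,dV=0$ by the ordinary divergence theorem on the closed manifold $M$; this establishes $(\ast)$ and hence \eqref{A-3-5}. An essentially equivalent route is to recognize $g^{i\bar j}_T\nabla^T_i W_{\bar j}$ (up to sign and a normalizing constant) as $\bar\p_B^{*}W$ and invoke the adjointness of $\bar\p_B^{*}$ and $\bar\p_B$, since $\int_M\bar\p_B^{*}W\,dV=\int_M\langle W,\bar\p_B 1\rangle\,dV=0$; I would nonetheless prefer the divergence-theorem argument, as it is self-contained and avoids any circularity with the construction of the transverse adjoints.
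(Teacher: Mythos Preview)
Your proof is correct and rests on the same vector field as the paper's, but the execution differs. The paper works entirely in the adapted local chart $(x,z_1,\dots,z_n)$: it sets $X=g^{i\bar j}_T\nabla^T_{\bar j}\phi_{A\bar B}\,\overline{\psi_{C\bar D}}\,g^{A\bar C}_Tg^{B\bar D}_T\,\p_{z_i}$, checks this is globally defined, forms the $2n$-form $\alpha=\iota_X dV$, computes $d\alpha$ explicitly from the coordinate expression $dV=c\det(g^T_{i\bar j})\,dx\wedge dZ\wedge d\bar Z$, applies Stokes, and then matches the Christoffel-symbol terms by hand. Your route is more invariant: you reduce the identity to $\int_M \mathrm{div}^T X\,dV=0$ for $X\in\Gamma(\cD)$ and obtain this by showing $\mathrm{div}^T X=\mathrm{div}_g X$ via an orthonormal-frame argument (the Reeb contribution vanishes since $\eta(X)=0$ and $\nabla_\xi\xi=0$), then invoke the ordinary divergence theorem. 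Your argument is cleaner and makes the underlying geometry transparent; the paper's has the virtue of being a bare-hands coordinate verification that needs nothing beyond the local description \eqref{A-3-1}. One small caution: when you write $V=g^{i\bar j}_TW_{\bar j}\,\p_i\in\Gamma(\cD\otimes\C)$, recall that $\p_i$ itself does not lie in $\cD$; you should use the $\cD^{1,0}$-frame $X_i=\p_i-\eta(\p_i)\p_x$ (equivalently, work through the identification $d\pi_\alpha:\cD\to TV_\alpha$), though this does not affect the argument.
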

\begin{proof}For simplicity, we only prove that $\phi, \psi$ are $(1, 1)$ forms. The general case can be proved similarly. First denote the connection of the transverse K\"ahler metric as
\[\Gamma_{i j}^k=g^{k\bar l}_T\frac{\p g^T_{i\bar l}}{\p z_{j}}. \]
Choose a local coordinate chart $(x, z_1, z_2, \cdots, z_n)$ as in \eqref{A-3-1}.
Then we have 
\begin{equation}\label{A-3-2}
\begin{split}
g^{i\bar j}_T\nabla_i^T\nabla^T_{\bar j} \phi_{A\bar B}&= \nabla^T_{i} \left(g^{i\bar j}_T\nabla_{\bar j}^T \phi_{A\bar B}\right)\\
&=\frac{\p}{\p z_i}\left(g^{i\bar j}_T\nabla_{\bar j}^T \phi_{A\bar B}\right)+\Gamma^{i}_{ik}g^{k\bar j}_T\phi_{A\bar B}-\Gamma^{p}_{iA}g^{i\bar j}_T\nabla^T_{\bar j}\phi_{p\bar B},\\
\overline{\nabla_{\bar i}^T \psi_{C\bar D}} g^{A\bar C}_T g^{\bar B D}_T&= \nabla_i^T\left(\overline{\psi_{C\bar D}} g^{A\bar C}_T g^{\bar B D}_T\right) \\
&=\frac{\p}{\p z_i}\overline{\psi_{C\bar D}}g^{A\bar C}_T g^{\bar B D}_T +\Gamma^{A}_{ik}\overline{\psi_{C\bar D}} g^{k\bar C}_T g^{\bar B D}_T, \\
dV&=c\det(g_{i\bar j}^T) dx\wedge dZ\wedge d\bar Z, 
\end{split}
\end{equation}
where
\[
dZ=dz_1\wedge \cdots \wedge dz_n, d\bar Z=dz_{\bar 1}\wedge\cdots \wedge dz_{\bar n}, c=(\sqrt{-1})^n\frac{n!}{2^n}
\]
Now define a vector field
\[
X=g^{i\bar j}_T\nabla^T_{\bar j}\phi_{A\bar B}\overline{\psi_{C\bar D}} g^{A\bar C}_T g^{\bar B D}_T\frac{\p}{\p z_i}.
\]
One can check directly that $X$ is globally defined on $M$. Hence we can get a
$2n$ form $\alpha$ on $M$ such that
\[
\alpha=\iota_{X}dV=c (-1)^i g^{i\bar j}_T\nabla^T_{\bar j}\phi_{A\bar B}\overline{\psi_{C\bar D}} g^{A\bar C}_T g^{\bar B D}_T \det(g_{i\bar j}^T)dx\wedge dz_1\wedge \cdots \hat {dz_{i}} \cdots dz_{n}\wedge d\bar Z.
\]
Then straightforward computation shows that
\[
d\alpha=c\frac{\p}{\p z_i} \left(g^{i\bar j}_T\nabla^T_{\bar j}\phi_{A\bar B}\overline{\psi_{C\bar D}} g^{A\bar C}_T g^{\bar B D}_T \det(g_{i\bar j}^T)\right)dx\wedge dZ\wedge d\bar Z.
\]
Hence 
\[
\int_M \frac{\p}{\p z_i} \left(g^{i\bar j}_T\nabla^T_{\bar j}\phi_{A\bar B}\overline{\psi_{C\bar D}} g^{A\bar C}_T g^{\bar B D}_T \det(g_{i\bar j}^T)\right)dx\wedge dZ\wedge d\bar Z=0. 
\]
It follows that
\begin{equation}\label{A-3-3}
\begin{split}
&\int_M \frac{\p}{\p z_i} \left(g^{i\bar j}_T\nabla^T_{\bar j}\phi_{A\bar B}\right)\overline{\psi_{C\bar D}} g^{A\bar C}_T g^{\bar B D}_T \det(g_{i\bar j}^T)dx\wedge dZ\wedge d\bar Z\\&\;=
-\int_M
g^{i\bar j}_T\nabla^T_{\bar j}\phi_{A\bar B}\frac{\p}{\p z_i} \left(\overline{\psi_{C\bar D}} g^{A\bar C}_T g^{\bar B D}_T \det(g_{i\bar j}^T)\right)dx\wedge dZ\wedge d\bar Z\\
&=-\int_Mg^{i\bar j}_T\nabla^T_{\bar j}\phi_{A\bar B}\frac{\p}{\p z_i} \left(\overline{\psi_{C\bar D}} g^{A\bar C}_T g^{\bar B D}_T\right) \det(g_{i\bar j}^T)dx\wedge dZ\wedge d\bar Z\\
&\;-\int_Mg^{i\bar j}_T\nabla^T_{\bar j}\phi_{A\bar B} \overline{\psi_{C\bar D}} g^{A\bar C}_T g^{\bar B D}_T\frac{\p \det(g_{i\bar j}^T)}{\p z_i}dx\wedge dZ\wedge d\bar Z. 
\end{split}
\end{equation}
Note that
\begin{equation}\label{A-3-4}
\frac{\p}{\p z_k} \det(g_{i\bar j}^T)=\Gamma^{i}_{ik} \det(g_{i\bar j}^T). 
\end{equation}
Taking \eqref{A-3-2} and \eqref{A-3-4} into account, then \eqref{A-3-5} follows from \eqref{A-3-3}.
\end{proof}

Sometimes it is more convenient to write the $\cW$ functional as,  taking $w=e^{-f/2}$, 
\begin{equation}\label{E-3-22}
\cW(g, w, \tau)=\int_M\left(\tau \left(R^Tw^2+4|\nabla w|^2\right)-w^2\log w^2\right)\tau^{-n}dV,
\end{equation}
where $w\in C^\infty_B(M)$ such that $dw(\xi)=0$ and
\[
\int_M w^2\tau^{-n}dV=1. 
\]

When $w$ in $\eqref{E-3-22}$ is not assumed to be a basic function, Rothaus' result \cite{Rothaus} can be applied  to get that there is a nonnegative minimizer $w_0$. Furthermore, Rothaus \cite{Rothaus} showed that a nonnegative minimizer has to be positive everywhere and smooth. Hence one can get a smooth minimizer for \eqref{E-3-1} by letting $f_0=-2\log w_0$. When $w$ is assumed to be basic, Rothaus' results certainly imply that $\cW(g, w, \tau)$ is bounded from below, hence we can define $\mu$ functional 
as \begin{equation}\label{E-3-25}
\mu(g, \tau)=\inf_f \cW(g, f, \tau),
\end{equation} 
where $df(\xi)=0$ such that $\int_M \tau^{-n}e^{-f}dV=1$ and it is finite.
We could also mimic his proof (for \eqref{E-3-22}) by using  a variational approach  for $w\in W^{1, 2}_B(M)$ instead of $W^{1, 2}(M)$,  where $W^{1, 2}_B(M)$ is the closure of $C^\infty_B(M)$ in $W^{1, 2}(M)$. One can actually show that there is a nonnegative minimizer without too much change as in \cite{Rothaus}, Section 1.
We summarize Rothaus' results as follows.
\begin{theo}[Rothaus \cite{Rothaus}] \label{E-4-R}There is a  minimizer $w_0\in W^{1, 2}_B(M)$ of \eqref{E-3-22} which is nonnegative. Moreover, $w_0$ is strictly positive and smooth. 
\end{theo}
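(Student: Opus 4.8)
The plan is to run Rothaus' direct-method argument \cite{Rothaus} essentially verbatim, but inside the closed subspace $W^{1,2}_B(M)\subset W^{1,2}(M)$, checking at each step that the transverse/basic structure does not interfere. Throughout $\tau>0$ is fixed, so $\cW(g,w,\tau)$ is, up to the positive constants $\tau^{1-n}$ and $\tau^{-n}$, the functional $w\mapsto \int_M(4|\nabla w|^2+R^Tw^2)\,dV-\int_M w^2\log w^2\,dV$, to be minimized over $\{w\in W^{1,2}_B(M):\int_M w^2\tau^{-n}dV=1\}$.

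First I would establish that $\cW$ is bounded below and coercive on this constraint set. Since $M$ is compact, the Sobolev inequality holds and implies a logarithmic Sobolev inequality: for every $\ve>0$ there is $\beta(\ve)$ with $\int_M w^2\log w^2\,dV\le \ve\int_M|\nabla w|^2\,dV+\beta(\ve)$ whenever $\int_M w^2\,dV=\tau^n$. Choosing $\ve$ small compared with $4\tau$ absorbs the entropy term into the Dirichlet energy, giving $\cW(g,w,\tau)\ge c\int_M|\nabla w|^2\,dV-C$ with constants $c>0$, $C$ depending only on $\tau$, $R^T$ and $M$. Hence $\inf\cW$ is finite and any minimizing sequence is bounded in $W^{1,2}$.

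Next, existence. Take a minimizing sequence $w_k\in W^{1,2}_B(M)$, bounded in $W^{1,2}$ by coercivity. Since $W^{1,2}_B(M)$ is by definition the $W^{1,2}$-closure of $C^\infty_B(M)$, it is a closed, hence weakly closed, linear subspace, so after passing to a subsequence $w_k\rightharpoonup w_0$ weakly in $W^{1,2}_B(M)$. By Rellich $w_k\to w_0$ strongly in $L^2$, so the constraint passes to the limit; the quadratic part $\int_M(4|\nabla w|^2+R^Tw^2)\,dV$ is weakly lower semicontinuous, while the entropy term is continuous along the sequence because the uniform $W^{1,2}$ bound together with the logarithmic Sobolev inequality gives uniform integrability of $w_k^2\log w_k^2$. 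Therefore $w_0$ is a minimizer. Finally, replacing $w_0$ by $|w_0|$ keeps it in $W^{1,2}_B(M)$ — the absolute value of a $\xi$-invariant function is again $\xi$-invariant — and does not change $\cW$, since $\cW$ depends on $w_0$ only through $w_0^2$ and $|\nabla|w_0||=|\nabla w_0|$ almost everywhere; so we may take $w_0\ge 0$.

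It remains to prove regularity and strict positivity, and here the basic structure is used in an essential but harmless way. Varying $\cW$ among basic functions and using the Lagrange multiplier $\mu$ for the constraint, $w_0$ is a weak solution of a semilinear equation of the schematic form $4\tau\,\t^T w_0=(\tau R^T-1-\mu)w_0-w_0\log w_0^2$; because both $w_0$ and the test functions are basic, the only second-order operator surviving the integration by parts is the transverse Laplacian $\t^T$, so on each foliation chart $V_\alpha$ this descends to a genuine semilinear elliptic equation for the function induced by $w_0$ on $V_\alpha$. Local elliptic regularity on $V_\alpha$ then applies: since $\log$ grows slowly, $w_0\in L^p$ forces $w_0\log w_0^2\in L^{p-\delta}$, and the usual bootstrap promotes $w_0$ to a smooth basic function. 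The genuinely delicate point — the same one in Rothaus' original proof — is the strict positivity, because at a prospective zero of $w_0$ the zeroth-order coefficient $\log w_0^2$ is unbounded, so the ordinary strong maximum principle does not apply directly. I would handle it exactly as Rothaus does, exploiting the special structure of $s\log s^2$ (equivalently, deriving a lower bound for $\log w_0$ by a maximum-principle estimate on $\{w_0>0\}$) to rule out interior zeros; together with the normalization $\int_M w_0^2\tau^{-n}dV=1\neq 0$ this yields $w_0>0$ everywhere. The main obstacle is thus not the passage to basic functions — which merely localizes all of the analysis to the transverse K\"ahler charts $V_\alpha$ — but pushing Rothaus' positivity argument through the logarithmic singularity.
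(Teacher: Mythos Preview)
Your existence argument via the direct method is correct and matches the paper's. The gap is in the regularity step. You claim that the weak Euler--Lagrange identity, which you only have against \emph{basic} test functions $u\in W^{1,2}_B(M)$, ``descends to a genuine semilinear elliptic equation on $V_\alpha$''. But to run local elliptic regularity on $V_\alpha$ you need the weak equation tested against all of $C^\infty_c(V_\alpha)$; equivalently, you need basic test functions on $M$ supported in a single foliated chart $U_\alpha$. In general---and certainly in the irregular case, where every Reeb orbit is dense in a torus of dimension $\geq 2$---a basic function cannot be compactly supported in a foliated chart unless it vanishes, so the descent is not justified as stated.

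The paper fixes this differently, without localizing. Working with the full Laplacian on $M$, it solves an auxiliary Poisson equation $\t h=A_0$ (the zeroth-order part of the Euler--Lagrange equation), checks by approximation that $h\in W^{1,2}_B$---this is where the hypothesis that $R^T$ is basic is used---and then shows $\int_M(h-w_0)\t u=0$ for all $u\in C^\infty_B$, forcing $h-w_0$ to be constant. Thus $w_0$ is a genuine weak solution of \eqref{E-w-1} on $M$, after which De~Giorgi--Nash, $L^p$ bootstrap, and Rothaus' positivity lemma apply. An alternative repair of your route would be to average an arbitrary $u\in C^\infty(M)$ over the torus generated by the closure of the Reeb flow: since $w_0$, $\nabla w_0$, $R^T$, the metric and volume form are all torus-invariant, the weak identity then extends from $C^\infty_B$ to all of $C^\infty(M)$, and standard elliptic regularity on $M$ (or, now legitimately, your descent to $V_\alpha$) applies.
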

\begin{proof}
First of all, we show that there exists a nonnegative minimizer $w_0\in W^{1, 2}_B$. 
The statement is a special case considered by Rothaus applied to $W^{1, 2}_B$ instead of $W^{1, 2}$. The proof follows the line in \cite{Rothaus} and it  is almost  identical (\cite{Rothaus}, Section 1). The only difference is  to prove that a minimizer $w_0$ is actually  a weak solution of the equation,
\begin{equation}\label{E-w-1}
4\t w_0+w_0\log w_0^2-R^T w_0+\mu(g, \tau) w_0=0.
\end{equation}
The proof is slightly more involved and we need that $R^T$ is a basic function.  we shall prove this in Appendix. 
Once we show that  $w_0$ is a weak solution of \eqref{E-w-1} and it is bounded, we can actually apply
Rothaus' result  to show get that $w_0$ is strictly positive (see Lemma on page 114, \cite{Rothaus}); hence we would get a smooth minimizer. 
\end{proof}

Actually what we really need is the fact that the $\mu$ functional is nondecreasing along the Sasaki-Ricci flow, which would be enough for us in essence and does not really depend on the above theorem. 
 The proof is similar as in Ricci flow case if we replace a minimizer by a minimizing sequence.  
\begin{prop}\label{P-3-6}Let $\tau_0$ be a positive constant and 
\[
\tau(t)=(\tau_0-1)e^t+1>0, t\in [0, T]
\]
Then the functional $\mu(g(t), \tau(t))$ is monotone increasing along the Sasaki-Ricci flow on $[0, T]$. 
\end{prop}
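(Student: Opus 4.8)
The plan is to prove the equivalent statement that $\mu(g(t_1),\tau(t_1))\le \mu(g(t_2),\tau(t_2))$ for every $0\le t_1<t_2\le T$. The engine is the monotonicity formula \eqref{E-3-21} of Proposition \ref{P-3-4}: along the coupled flow \eqref{E-3-20}, in which $g$ evolves by the Sasaki-Ricci flow, $\tau$ solves $\dot\tau=\tau-1$ (so that $\tau(t)=(\tau_0-1)e^t+1$), and $f$ solves the backward heat equation, the functional $\cW$ of \eqref{E-3-17} is nondecreasing. The idea, as in the Ricci flow case, is to run the backward heat equation for $f$ from $t_2$ back to $t_1$ starting from a near-minimizer at $t_2$, compare $\cW$ at the two times by \eqref{E-3-21}, and then bound below by $\mu$ at $t_1$, using a minimizing sequence in place of an actual minimizer so as not to rely on Theorem \ref{E-4-R}.

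First I would check that the coupled flow preserves the admissible class for the definition \eqref{E-3-25} of $\mu$. Given smooth basic terminal data $f(t_2)=f_*$, Proposition \ref{P-3-5} furnishes a unique smooth solution $f\colon[t_1,t_2]\to C^\infty_B(M)$ of the backward heat equation in \eqref{E-3-20}; in particular each $f(t)$ stays basic. It then remains to see that the normalization $\int_M\tau^{-n}e^{-f}\,dV=1$ is preserved. Differentiating in $t$ and inserting $\dot\tau=\tau-1$, the evolution of $f$, and $\partial_t\,dV=(n-R^T)\,dV$ for the Sasaki-Ricci flow (which follows from $dV=c\det(g^T_{i\bar j})\,dx\wedge dZ\wedge d\bar Z$ in the adapted coordinates \eqref{A-3-1}, since $\xi=\p_x$ and the transverse holomorphic coordinates are fixed), all the curvature and $n\tau^{-1}$ terms cancel and one is left with $\frac{d}{dt}\int_M\tau^{-n}e^{-f}\,dV=\int_M\tau^{-n}(\t^T f-|\nabla^T f|^2)e^{-f}\,dV$. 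This vanishes because $\t^T(e^{-f})=(-\t^T f+|\nabla^T f|^2)e^{-f}$ integrates to zero by the transverse integration-by-parts formula of Proposition \ref{P-3-i}. Hence $f(t)$ is admissible for $\mu(g(t),\tau(t))$ at every $t$.

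With these preliminaries the main argument is short. I would take a minimizing sequence $f_k\in C^\infty_B(M)$ at time $t_2$, normalized so that $\int_M\tau(t_2)^{-n}e^{-f_k}\,dV=1$ and $\cW(g(t_2),f_k,\tau(t_2))\to\mu(g(t_2),\tau(t_2))$. For each $k$ solve the backward heat equation on $[t_1,t_2]$ with terminal value $f_k$, obtaining an admissible basic path $f_k(t)$. By Proposition \ref{P-3-4} the functional $\cW$ is nondecreasing along the coupled flow, so $\cW(g(t_1),f_k(t_1),\tau(t_1))\le\cW(g(t_2),f_k,\tau(t_2))$. Since $f_k(t_1)$ is admissible at $t_1$, the definition \eqref{E-3-25} gives $\mu(g(t_1),\tau(t_1))\le\cW(g(t_1),f_k(t_1),\tau(t_1))$. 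Chaining these and letting $k\to\infty$ yields $\mu(g(t_1),\tau(t_1))\le\mu(g(t_2),\tau(t_2))$, which is the asserted monotonicity.

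The step I expect to be the main obstacle is the backward-heat input: ensuring that for arbitrary smooth basic terminal data the equation in \eqref{E-3-20} admits a smooth solution that remains basic on all of $[t_1,t_2]$. This is exactly what I would isolate in Proposition \ref{P-3-5}, solving it as a genuinely parabolic forward problem in the reversed time $s=t_2-t$ along the already-constructed Sasaki-Ricci flow, and verifying that basicness propagates because the Reeb field and the transverse holomorphic structure are fixed by the flow. The only other point needing care is that the minimizing-sequence argument requires no uniform estimates: the inequalities above are passed to the limit term by term, which is precisely why one can dispense with the existence and regularity of an actual minimizer.
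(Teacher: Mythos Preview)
Your proof is correct and follows essentially the same approach as the paper: take a minimizing sequence at the later time, solve the backward heat equation for each term via Proposition~\ref{P-3-5}, apply the monotonicity formula of Proposition~\ref{P-3-4}, bound below by $\mu$ at the earlier time, and pass to the limit. Your explicit verification that the normalization $\int_M\tau^{-n}e^{-f}\,dV=1$ is preserved along the coupled flow is a detail the paper leaves implicit.
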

\begin{proof}
It is clear that $\tau(t)$ satisfies $\p_t\tau=\tau-1$ with $\tau(0)=\tau_0$.
Suppose at time $T$, there is a minimizing sequence $\{f_k(T)\}$ such that
\[
\cW_k=\cW(g(T), f_k(T), \tau(T))\rightarrow \mu(g(T), \tau(T)), k\rightarrow \infty.
\]
By Proposition \ref{P-3-5} below, there is a basic function $f_k(t)$ satisfies \eqref{E-3-22}. Hence by Proposition \ref{P-3-4}, we have that
\[\cW_k(t)=\cW(g(t), f_k(t), \tau)\leq \cW_k.\] 
Hence $\mu(g(t), \tau(t))\leq \cW_k(t)\leq \cW_k$. Let $k\rightarrow \infty$, we get $\mu(g(t), \tau(t))\leq \mu(g(T), \tau(T))$.
\end{proof}

Now we consider the backward heat equation 
\begin{equation}\label{E-3-23}
\frac{\p f}{\p t}=n\tau^{-1}-R^T-\t f+|\nabla f|^2,
\end{equation}
where $\tau$ is a positive number depending only on time such that $\p_t \tau=\tau-1$. 
The corresponding backward heat equation for $w$ is
\begin{equation}\label{E-3-23a}
\frac{\p w}{\p t}=-\t w+(R^T-n\tau^{-1})w.
\end{equation}

\begin{prop}\label{P-3-5}Suppose  the Sasaki-Ricci flow exists in $[0, T]$, 
then for any $w(T)\in C^{\infty}(M)$, there is a unique solution $w(t)\in C^\infty(M)$ of \eqref{E-3-23a} for $t\in [0, T]$. If $w(T)\geq 0$, then $w(t)\geq 0$;  and if in addition $w(T)$ is not identically zero, then $w(t)>0$ for $t\in [0, T)$. Moreover if $w(T)\in C^\infty_B(M)$, then $w(t)\in C^{\infty}_B(M)$. As a consequence, 
then for any $f(T)\in C^\infty_B(M)$, there exists a unique solution $f(t)\in C^\infty_B(M)$ for any $t\in [0, T]$. If $w(T)\in W^{1, 2}_B$ is nonnegative, then $w(t)\in C^\infty_B$ and it is nonnegative. 
\end{prop}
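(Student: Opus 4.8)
The plan is to notice that the substitution $w=e^{-f/2}$ converts the semilinear equation \eqref{E-3-23} for $f$ into the \emph{linear} equation \eqref{E-3-23a} for $w$; thus the entire statement reduces to the theory of a single linear parabolic equation, and the assertion about $f$ is recovered at the end by setting $f=-2\log w$ once $w$ has been shown to be smooth, basic and positive. First I would reverse time by putting $s=T-t$, so that \eqref{E-3-23a} becomes the forward equation $\p_s w=\t w-c\,w$ on $M\times[0,T]$, with potential $c=R^T-n\tau^{-1}$. Since the Sasaki-Ricci flow is assumed to exist and be smooth on $[0,T]$, the metrics $g(t)$, the transverse scalar curvature $R^T$ and the function $\tau(t)=(\tau_0-1)e^{t}+1>0$ are all smooth and bounded, so $c$ is a smooth bounded time-dependent potential and the second-order part $\t$ has smooth coefficients.

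For existence and uniqueness I would invoke standard linear parabolic theory on the closed manifold $M$: given smooth data $w(T)=w_0$, viewed as initial data at $s=0$, there is a unique $w\in C^\infty(M\times[0,T])$, obtained for instance from $L^2$ energy estimates together with parabolic bootstrapping (or the analytic-semigroup formalism). Uniqueness also follows directly from a Gronwall argument: the difference $u$ of two solutions satisfies $\tfrac{d}{ds}\int_M u^2\,dV=-2\int_M|\nabla u|^2\,dV-2\int_M c\,u^2\,dV\le 2\sup|c|\int_M u^2\,dV$, forcing $u\equiv0$. Nonnegativity of $w(t)$ when $w(T)\ge0$ is the weak parabolic maximum principle (the bounded zeroth-order term $-c\,w$ is handled by comparing with $e^{\lambda s}$), and if moreover $w(T)\not\equiv0$ the strong maximum principle gives $w>0$ for $s>0$, that is $w(t)>0$ for $t\in[0,T)$.

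To see that basic data stay basic I would use that the Reeb field $\xi$ is Killing for every $g(t)$ and that $c=R^T-n\tau^{-1}$ is basic, so $\xi$ (equivalently $\mathcal{L}_\xi$) commutes with the whole evolution operator; hence $u:=\xi w$ solves the same linear equation $\p_s u=\t u-c\,u$ with initial datum $u(0)=\xi w_0=0$, and uniqueness forces $u\equiv0$, i.e. $w(t)\in C^\infty_B(M)$. The statement for $f$ then follows: starting from $f(T)\in C^\infty_B(M)$ set $w_0=e^{-f(T)/2}$, which is basic and positive; the resulting $w$ is smooth, basic and positive on all of $[0,T]$ (positive on $[0,T)$ by the strong maximum principle and positive at $t=T$ because $w_0>0$), so $f:=-2\log w\in C^\infty_B(M)$ is the unique solution of \eqref{E-3-23}. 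For merely $W^{1,2}_B$ nonnegative terminal data I would use parabolic smoothing, which makes the solution $C^\infty$ for $s>0$ (that is $t<T$), together with an approximation by smooth nonnegative basic functions to retain both basicness and nonnegativity in the limit.

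The step requiring the most care is the precise nature of the operator $\t$. If $\t$ is read as the transverse Laplacian $\t^{T}$, then for data that are \emph{not} basic the reversed equation is only degenerate parabolic, elliptic along the contact distribution $\cD$ but not along $\xi$; the remedy is that $\cD$ is bracket-generating, since nondegeneracy of $d\eta$ on $\cD$ forces $[\cD,\cD]$ to recover the $\xi$-direction, so $\t^{T}$ is hypoelliptic by H\"ormander's theorem and the associated heat flow still enjoys interior smoothing and a strong maximum principle. On basic functions this subtlety disappears, because there $\t^{T}$ coincides with the ordinary Laplacian and the equation is genuinely parabolic, so the portions of the statement about basic $w$ and about $f$ are unaffected.
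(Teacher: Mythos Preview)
Your proposal is correct and follows essentially the same skeleton as the paper's proof: reverse time, reduce to the forward linear equation $\partial_s w=\t_{g(s)}w-(R^T-n\tau^{-1})w$, invoke standard linear parabolic existence/uniqueness, and obtain basicness by observing that $\xi w$ solves the same equation with zero data. The one substantive difference is in the positivity step: you appeal directly to the weak and strong parabolic maximum principles, whereas the paper runs a duality argument, pairing $w$ against the fundamental solution $h$ of the forward heat equation $\partial_t h=\t h$ with $h\to\delta_p$, and uses the conserved quantity $\int_M \tau^{-n} w(T-t)h(t)\,dV$ to conclude $w(p,T-t_0)>0$. Both arguments are standard and equally valid.

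Your final paragraph is more cautious than necessary. In the paper $\t$ denotes the full Riemannian Laplacian $\t_{g(s)}$ (the proof writes this explicitly), so the equation is genuinely uniformly parabolic for arbitrary smooth data and no hypoellipticity is needed; on basic functions $\t$ and $\t^T$ agree anyway. With that reading, the first part of the proposition (for general $w(T)\in C^\infty(M)$) is just classical linear parabolic theory, exactly as you say.
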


\begin{proof}The proof is similar as in the Ricci flow case (c.f \cite{KL} for example) and $f$ is basic due to the maximum principle. 
Let $s=T-t$ and $\tilde f(s)=\exp(-f(s))$, then one can compute that
\[
\frac{\p \tilde f}{\p s}=\t_{g(s)}\tilde f-(R^T-n\tau^{-1})\tilde f,
\]
such that $\tilde f(0)\in C^\infty_B(M)$.   This is a linear equation for $\tilde f$ and there exists a unique smooth solution if $\tilde f (0)$ is smooth. 
Now if $\tilde f(0)\geq 0$, then $\tilde f\geq 0$ and if it is positive at one point in addition, we claim that $\tilde f>0$ for any $s\in [0, T]$. 
 Suppose $s_0\in (0, T]$ such that $s_0=T-t_0$. Let $h(t)$ satisfies the heat equation
 \[
 \frac{\p h}{\p t}=\t h
 \] 
on $(t_0, T]$ such that $\lim_{t\rightarrow t_0}h\rightarrow \delta_p$ for any $p\in M$, where $\delta_p$ is the delta function. Consider $\tilde f(T-t)$ for $t\in [t_0, T]$,
then by direct computations we can get that
\begin{equation}\label{E-3-24}
\frac{d}{d t} \int_M \tau^{-n}\tilde f(T-t) h(t)dV=0. 
\end{equation}
It is clear that $h>0$ in $(t_0, T]$. Then 
\[
\tilde f(p, T-t_0)=\int_M \tilde f(q, T-t)\delta_p(q)dV(q)=\tau^n(t_0)\lim_{t\rightarrow t_0} \int_M \tau^{-n} \tilde f(T-t) h(t)dV.
\]
Hence by \eqref{E-3-24} we get
\[
\tilde f(p, T-t_0)=\tau^n(t_0)\tau^{-n}(T)\int_M \tilde f(0) h(T)dV\geq 0.
\]
And it is clear that if $\tilde f(0)>0$ somewhere, then $\tilde f(p, T-t_0)>0$ for any $p$. 
Suppose $\tilde f(0)$ is basic, then for $\xi \tilde f$ we have
\[
\frac{\p (\xi \tilde f)}{\p s}=\t_{g(s)}(\xi \tilde f)-(R^T-n\tau^{-1})(\xi \tilde f).
\]
since $R^T$ is basic. Hence by the uniqueness of the solution we know that $\xi\tilde f\equiv0$ if $\xi\tilde f(0)=0$. Let $f(T-t)=-\log \tilde f(T-t)$, then it satisfies \eqref{E-3-23}. If $\tilde f(0)\in W^{1, 2}$, then by the standard regularity theory for the linear parabolic theory, we have $\tilde f(t)\in C^\infty$. 
\end{proof}

\section{The Ricci Potential and the Scalar Curvature along the  Flow}

We consider the (transverse) scalar curvature and the (transverse) Ricci potential along the Sasaki-Ricci flow.
The following discussions and computation follow closely the K\"ahler setting \cite{Sesum-Tian}.  

 First we show that there is a uniform lower bound on the transverse Ricci potential $u(x, t)$, which is defined by
 \begin{equation}\label{E-4-1}
 g^T_{i\bar j}-R^T_{i\bar j}=\p_i\bar \p_j u
 \end{equation}
with the normalized condition
\begin{equation}\label{E-4-2}
\int_M e^{-u}dV_g=1. 
\end{equation}
Note that $u\in C^\infty_B(M)$.   We compute
\[
\begin{split}
\p_t \p_i\bar \p_{ j} u(x, t)&= g_{i\bar j}^T-R^T_{i\bar j}+\p_t \p_i\bar \p_j \log (g^T_{i\bar j}(x, 0)+\p_i\bar\p_j \phi(x, t))\\
&=\p_i\bar \p_j (u+\t ^Tu).
\end{split}\]

Hence there exists a time dependent constant $A(t)$ such that
\begin{equation}\label{E-4-3}
\frac{\p u}{\p t}=\t^T u+u+A,\end{equation}
where 
\[
A=-\int_M ue^{-u} dV_g
\] in view of the normalized condition \eqref{E-4-2}. 

\begin{lemma} \label{L-4-1}$A(t)$ is uniformly bounded.
\end{lemma}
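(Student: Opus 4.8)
The plan is to establish a two-sided bound on $A(t)$, with the lower bound coming from an elementary pointwise estimate and the (substantive) upper bound coming from comparison with the $\mu$ functional at the fixed parameter $\tau\equiv 1$. For the lower bound I would use the elementary inequality $se^{-s}\le e^{-1}$, valid for all $s\in\R$ (the function $se^{-s}$ attains its global maximum $e^{-1}$ at $s=1$). Taking $s=u$ and integrating gives $\int_M ue^{-u}\,dV_g\le e^{-1}V$, where $V=\mathrm{Vol}(M,g(t))$. Since the basic class $[\tfrac12 d\eta]_B$ and the Reeb field $\xi$ are preserved along the Sasaki-Ricci flow, the contact volume $V=\int_M\eta\wedge(\tfrac12 d\eta)^n$ is a basic cohomological invariant and hence constant in $t$. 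Therefore $A(t)=-\int_M ue^{-u}\,dV_g\ge -e^{-1}V$, a uniform lower bound.

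For the upper bound, the key idea is to use $u$ itself as a competitor in the $\cW$ functional with $\tau=1$. Since $u\in C^\infty_B(M)$ and $\int_M e^{-u}\,dV_g=1$ by \eqref{E-4-2}, the function $u$ is admissible for $\cW(g,\cdot,1)$. Tracing the defining equation \eqref{E-4-1} with $g^{i\bar j}_T$ yields $R^T=n-\t^T u$. Substituting this into \eqref{E-3-17} at $\tau=1$ and converting $\int_M(\t^T u)e^{-u}\,dV_g$ into $\int_M|\nabla u|^2 e^{-u}\,dV_g$ by the transverse integration-by-parts identity (the basic analogue of \eqref{E-3-4a}, cf. Proposition \ref{P-3-i}), the two gradient terms cancel and I obtain
\[
\cW(g,u,1)=\int_M\bigl(R^T+|\nabla u|^2+u\bigr)e^{-u}\,dV_g=n+\int_M ue^{-u}\,dV_g=n-A(t).
\]
Consequently $\mu(g(t),1)\le \cW(g(t),u,1)=n-A(t)$, that is, $A(t)\le n-\mu(g(t),1)$.

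It then remains to bound $\mu(g(t),1)$ from below, and here I invoke the monotonicity of Proposition \ref{P-3-6} with the degenerate choice $\tau_0=1$: for this value $\tau(t)=(\tau_0-1)e^t+1\equiv 1$, so $\mu(g(t),1)$ is nondecreasing and hence $\mu(g(t),1)\ge \mu(g(0),1)$ for all $t\in[0,T]$. Combining with the previous step gives $A(t)\le n-\mu(g(0),1)$, and together with the lower bound this yields the claimed uniform bound on $A(t)$.

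\textbf{Main obstacle.} The one step requiring genuine care is the exact evaluation $\cW(g,u,1)=n-A$: it depends on the transverse integration by parts holding for basic functions so that the gradient terms cancel precisely as in the K\"ahler computation, which is exactly what Proposition \ref{P-3-i} (and the basic analogues of \eqref{E-3-4}) guarantees. Two smaller points to verify are that $\tau\equiv 1$ is a legitimate choice in Proposition \ref{P-3-6} and that $\mu(g,1)$ is finite (the latter being ensured by Theorem \ref{E-4-R}). With these in place the argument is complete.
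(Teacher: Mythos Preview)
Your proposal is correct and follows essentially the same approach as the paper: the lower bound via $se^{-s}\le e^{-1}$ together with constancy of the volume, and the upper bound by plugging the Ricci potential $u$ (which already satisfies the normalization \eqref{E-4-2}) into $\cW(g,\cdot,1)$, computing $\cW(g,u,1)=n+\int_M ue^{-u}\,dV_g=n-A(t)$ using $R^T=n-\t^T u$ and the basic integration by parts, and then invoking the monotonicity of $\mu(g(t),1)$ with $\tau_0=1$. The points you flag as obstacles are exactly the verifications the paper relies on as well.
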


\begin{proof}
It is clear that $f(t)=te^{-t}\leq e^{-1}$ is bounded from above for any $t\in \R$. It then follows that
\[
A(t)=-\int_M ue^{-u}dV_g\geq -e^{-1}\int_M dV_g. 
\]
Note that the volume of $(M, \xi, g)$ is a constant  along the flow.

Let $\tau_0=1$ (hence $\tau\equiv 1$) and consider the functional $\mu(g(t), 1)$. Hence
\[
\mu(g(0), 1)\leq \mu(g(t), 1).
\]
Note that $R^T=n-\t^T u$; hence we get that
\[
\cW(g(t), u(t), 1)=\int_M (R^T+u_{i}u_{\bar i}+u)e^{-u}dV_g=n+\int_M ue^{-u}dV_g.
\]

It follows that
\[
\int_M ue^{-u}dV_g\geq \cW(g(t), u(t), 1)-n\geq \mu(g(0), 1)-n.
\]

It follows that
\[
A(t)=-\int_M ue^{-u}dV_g\leq -\mu(g(0), 1)+n.
\]
\end{proof}

\begin{prop} \label{P-4-2}The transverse scalar curvature $R^T$ satisfies 
\begin{equation}\label{E-4-4}
\frac{\p R^T}{\p t}= \t^T R^T-R^T+|Ric^T|^2.
\end{equation}
In particular, $R^T$ is bounded from below. 
\end{prop}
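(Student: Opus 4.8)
The plan is to reduce the evolution identity to a purely local computation on the leaf–space quotients $V_\alpha$, where the Sasaki-Ricci flow restricts to the honest normalized K\"ahler-Ricci flow $\p_t g^T_\alpha = g^T_\alpha - Ric^T_\alpha$. Since $R^T$ is a basic function whose restriction to each $V_\alpha$ is exactly the scalar curvature of $g^T_\alpha$, and since both $\p_t$ and $\t^T$ are computed transversally (they descend to $V_\alpha$ via the Haefliger cocycle description of Section 3), it suffices to derive the formula on a single chart and then observe that the resulting identity is an equality of basic functions, hence holds globally on $M$.

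For the computation itself I would differentiate $R^T = g^{i\bar j}_T R^T_{i\bar j}$ in time. The inverse metric contributes $(\p_t g^{i\bar j}_T)R^T_{i\bar j}$; using $\p_t g^{i\bar j}_T = -g^{i\bar l}_T g^{k\bar j}_T\,\p_t g^T_{k\bar l}$ together with $\p_t g^T_{k\bar l}=g^T_{k\bar l}-R^T_{k\bar l}$, this collapses to $-R^T + |Ric^T|^2$. For the remaining term I would use the K\"ahler expression $R^T_{i\bar j} = -\p_i\p_{\bar j}\log\det(g^T_{k\bar l})$, so that $\p_t R^T_{i\bar j} = -\p_i\p_{\bar j}\big(g^{k\bar l}_T(g^T_{k\bar l}-R^T_{k\bar l})\big) = \p_i\p_{\bar j}R^T$ (using $g^{k\bar l}_T g^T_{k\bar l}=n$); contracting gives $g^{i\bar j}_T\,\p_t R^T_{i\bar j} = \t^T R^T$. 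Adding the two contributions yields \eqref{E-4-4}. Equivalently, this is just the variation formula $\delta R^T = -\t^T v - \langle v_{i\bar j}, R^T_{i\bar j}\rangle$ recorded in the proof of Proposition \ref{P-3-3}, evaluated at $v_{i\bar j}=g^T_{i\bar j}-R^T_{i\bar j}$, so that $v = n - R^T$ and $\langle v_{i\bar j}, R^T_{i\bar j}\rangle = R^T - |Ric^T|^2$.

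For the lower bound I would invoke the maximum principle for basic functions. By Cauchy-Schwarz applied to the Hermitian matrix $R^T_{i\bar j}$ one has $|Ric^T|^2 \geq \frac{1}{n}(R^T)^2 \geq 0$, so \eqref{E-4-4} gives the differential inequality $\p_t R^T \geq \t^T R^T - R^T$. Because $R^T$ is basic and $M$ is compact, its spatial minimum $\rho(t) = \min_M R^T(\cdot, t)$ is attained, and at a minimizing point $\t^T R^T \geq 0$; hence $\rho$ satisfies $\rho'(t) \geq -\rho(t)$ in the barrier sense, whence $\rho(t) \geq \rho(0)e^{-t}$. Since $e^{-t}\leq 1$, this quantity is bounded below by $\min(0, \rho(0))$, a constant independent of $t$, giving the uniform lower bound on $R^T$.

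The only genuinely nonroutine point is justifying that the maximum principle applies to the transverse Laplacian acting on the basic function $R^T$: one must check that varying along the Reeb leaves does not change $R^T$ (it does not, precisely because $R^T$ is basic), so that the transverse minimum coincides with the honest minimum of $R^T$ over $M$ and the sign $\t^T R^T \geq 0$ holds there. This is exactly where the basic nature of all quantities, and the fact recorded in Section 3 that $\t^T$ is well defined on basic functions through the Haefliger description, is essential; everything else is a transcription of the standard K\"ahler-Ricci flow scalar curvature computation.
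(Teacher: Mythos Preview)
Your proposal is correct and matches the paper's approach: the paper simply declares the evolution equation a ``straightforward computation'' and invokes the maximum principle after noting $\t^T R^T=\t R^T$ for basic functions. Your derivation of \eqref{E-4-4} fills in the details the paper omits, and your maximum-principle argument (using $|Ric^T|^2\geq 0$ and tracking the ODE for $\rho(t)$) is a valid implementation of the same step; the paper's observation that $\t^T=\t$ on basic functions is just another way of phrasing your remark that the transverse minimum coincides with the honest minimum on $M$.
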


\begin{proof} A straightforward computation gives \eqref{E-4-4}. Note that $R^T\in C^\infty_B(M)$; then $\t^T R^T=\t R^T$. Hence by the maximum principle 
$R^T$ is bounded from below. 
\end{proof}

\begin{lemma}\label{L-4-3} $u(x, t)$ is uniformly bounded from below.
\end{lemma}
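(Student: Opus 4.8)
The plan is to convert integral control of $u$ into a pointwise lower bound, exploiting that $u$ has a one–sided bounded Laplacian. First I would record the elliptic input. Tracing the defining equation \eqref{E-4-1} gives $\t^T u = n - R^T$, and since $u$ is basic we have $\t u = \t^T u$. By Proposition \ref{P-4-2} the transverse scalar curvature is uniformly bounded below, $R^T \geq -C_0$ (the maximum principle applied to \eqref{E-4-4} gives $R^T \geq \min_M R^T(0)$, independent of $t$), so
\[
\t u = n - R^T \leq n + C_0 =: C_1,
\]
with $C_1$ independent of $t$. Equivalently $-u$ is a uniform supersolution, $\t(-u) \geq -C_1$.

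Next I would extract integral control of $u$ from below using the normalization and the functional estimates already in hand. Applying Jensen's inequality to $\int_M e^{-u}\,dV = 1$, and recalling that the volume $V$ is constant along the flow, gives $\bar u := V^{-1}\int_M u\,dV \geq \log V$, a uniform lower bound on the mean. Independently, combining $\int_M e^{-u}\,dV = 1$ with the upper bound on $A(t) = -\int_M u\,e^{-u}\,dV$ from Lemma \ref{L-4-1} (whose upper bound is exactly the input provided by the monotonicity of $\mu$ in Proposition \ref{P-3-6}) controls $\int_M u\,e^{-u}\,dV$ from below; since $s e^{-s}$ is bounded for $s \geq 0$, this yields $\int_{\{u<0\}}(-u)\,e^{-u}\,dV \leq C$, and because $e^{-u} > 1$ where $u < 0$, the negative part obeys $\|u_-\|_{L^1(M)} \leq C$ uniformly.

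Then I would pass from these integral bounds to a pointwise bound. The clean route is the Green's function representation on $(M, g(t))$: writing $u(x) - \bar u = \int_M G(x,y)\,(-\t u(y))\,dV(y)$ with $\int_M G(x,\cdot)\,dV = 0$, and using $\t u \leq C_1$ together with a lower bound $G \geq -K$, one obtains $u(x) \geq \bar u - K C_1 V \geq \log V - K C_1 V$. Alternatively, one may run Moser iteration (weak Harnack) for the supersolution $-u$, bounding $\sup_M(-u)$ in terms of $\|u_-\|_{L^1}$ and $C_1$. Either way the conclusion $u \geq -C$ uniformly follows.

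The main obstacle is the uniformity, along the flow, of the geometric constant entering this last step — the lower bound $K$ for the Green's function, or equivalently a uniform Sobolev/weak–Harnack constant. This is precisely what Perelman's functional supplies: the inequality $\mu(g(t),\tau) \geq \mu(g(0),\tau)$ from Proposition \ref{P-3-6} is a uniform logarithmic Sobolev inequality, which upgrades to a uniform Sobolev inequality and hence to the required constant. One subtlety special to the Sasakian setting is that $\mu$ is defined only over basic functions; but this is exactly adapted to the problem, since $u$, $R^T$, and every quantity in play are basic. I would therefore phrase the last step with the transverse (basic) Sobolev inequality and basic test functions, so the entire argument descends to the transverse K\"ahler geometry — the leaf space $Z$ in the quasi-regular case — where the monotone functional delivers exactly the uniform constant needed, the fixed and uniformly bounded Reeb circle fibers contributing no loss.
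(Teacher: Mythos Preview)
Your approach is genuinely different from the paper's, and the difference is instructive. You try to control $u$ pointwise at each time $t$ by elliptic methods with respect to the \emph{evolving} metric $g(t)$, which forces you to confront the uniformity of Sobolev or Green's-function constants along the flow. The paper sidesteps that issue entirely: it argues by contradiction using the parabolic structure together with the Green's function of the \emph{fixed initial metric} $g(0)$. If $u(x_0,t_0)$ is very negative, the evolution $\partial_t u = n - R^T + u + A$ (with $n-R^T+A$ bounded above by Lemma~\ref{L-4-1} and Proposition~\ref{P-4-2}) forces $u$, and hence the potential $\phi$, to decay like $-C e^{t}$ on a fixed neighborhood $U$. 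On the other hand $\partial_t(u-\phi)=n-R^T+A\le C$, so $u-\phi$ grows at most linearly, and the normalization gives $\max_M u\ge -C$, whence $\max_M\phi\ge -C-Ct$. The punchline is that $n+\t_{g(0)}\phi>0$ (Monge--Amp\`ere positivity), so the Green's representation for $\phi$ with respect to $g(0)$ yields $\max_M\phi\le -C_4 e^t + C_5$, a contradiction. Only the geometry of $g(0)$ enters; no flowing constants are needed.

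Your route, as written, has a real gap precisely at the step you flag as ``the main obstacle.'' The bound $\mu(g(t),1)\ge\mu(g(0),1)$ is a log-Sobolev inequality only \emph{with the potential term} $\int R^T w^2$ present; since at this stage only a lower bound on $R^T$ is available, you cannot simply absorb that term. The upgrade from log-Sobolev-with-potential to a uniform Sobolev inequality is a separate theorem (heat-kernel/ultracontractivity arguments of the type due to Q.~Zhang or R.~Ye), not established anywhere in the paper and not a triviality --- and in the Sasakian setting it would have to be carried out entirely within basic functions. The alternative you mention, a uniform lower bound for the Green's function of $g(t)$, typically requires a diameter bound, which is Theorem~\ref{T-5-3} and comes \emph{after} this lemma; invoking it here would be circular. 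So while your strategy can be made rigorous with additional machinery, the missing step is substantially harder than the lemma itself; the paper's argument is both more elementary and self-contained.
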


\begin{proof}

By Lemma \ref{L-4-1} and Proposition \ref{P-4-2}, $-R^T+A(t)$ is bounded from above.  Suppose $u(x_0, t_0)$ is very negative at $(x_0, t_0)$ such that
\[u(x_0, t_0)+\max_{x, t} (n-R^T+A)\ll0.\]
 Then we can get that
\begin{equation}\label{E-4-5}
\frac{\p u}{\p t}=n-R^T+u+A<0
\end{equation}
at $(x_0, t_0)$. It follows that $u(x, t)\leq u(x_0, t_0)$ for $t\geq t_0$. Since $u(x, t)$ is smooth, there exists a neighborhood $U$ of $x_0$ such that  for any $x\in U$,
\[u(x, t_0)+\max_{x, t}(n-R^T+A)\ll0.\]
It then follows from \eqref{E-4-5} that $u(x, t)\leq u(x, t_0)$ for any $(x, t)\in U\times [t_0, \infty)$. 
We can  get that
\[
u(x, t)\leq e^{t-t_0}(C+u(x, t_0))\; \mbox{by}\;
\frac{\p u}{\p t}\leq C+u.
\]
Hence for $(x, t)\in U\times [t_0, \infty)$,
\[
u(x, t)\leq -C_1 e^{t},
\]
where $C_1$ depends on $t_0$.
Then $\p \phi/\p t= u$ implies that
\[
\phi(x, t)\leq \phi(x, t_0)-C_1e^{t-t_0}\leq -C_2 e^t
\]
for $t$ big enough and $x\in U$. On the other hand, by the normalized condition
\[
\int_M e^{-u}dV_g=1,
\]
we can get that 
\[
u(x(t), t)=\max_x u(x, t)\geq -C. 
\]
By \eqref{E-4-3}, we get that
\[
\frac{\p}{\p t}(u-\phi)=n-R^T+A\leq C.
\]
Hence
\[
u(x(t), t)-\phi(x(t), t)\leq \max_x (u(x, 0)-\phi(x, 0))+Ct. 
\]
It then follows that
\begin{equation}\label{E-4-6}
\max_x\phi(x, t)\geq -C-Ct. 
\end{equation}
It is clear that $n+\t^T_{g(0)}\phi=n+\t_{g(0)}\phi> 0$. Let $G_0$ be the Green function of $g(0)$ and 
apply  Green's formula to $\phi(x, t)$ with respect to the metric $g(0)$.  We can get for $t\geq t_0$ and any $x$, 
\[
\begin{split}
\phi(x, t)&=\int_M \phi dV_0-\int_M \t_{g(0)}(y, t)G_0(x, y)dV_0(y)\\
&\leq Vol_0(M\backslash U) \max_x \phi(x, t)+Vol_0(U)\int_U\phi(y, t)dV_0(y)+C\\
&\leq Vol_0(M\backslash U) \max_x \phi(x, t)-C_3e^t+C. 
\end{split}
\]
Since $Vol_0(M\backslash U)<Vol_0(M)=1$, we get that
\begin{equation}\label{E-4-7}
\max_x\phi(x, t)\leq -C_4e^t+C_5.
\end{equation}
Note that all constants $C_1, C_2, C_3, C_4, C_5$ are independent of $t\geq t_0$. For $t$ big enough, \eqref{E-4-6} contradicts \eqref{E-4-7}. Hence there exists a constant $B\geq 1$ such that $u(x, t)\geq -B$ for any $(x, t)$. 
\end{proof}

We shall use the notation
\[
\Box f=\left(\frac{\p}{\p t}-\t \right) f.
\]
Note that $u$ is basic; standard computations give that
\begin{equation}\label{E-4-8}\begin{split}
\Box  (\t u)&=-|u_{i\bar j}|^2+\t u,\\
\Box (|u_i|^2)&=-|u_{ij}|^2-|u_{i\bar j}|^2+|u_i|^2. 
\end{split}
\end{equation}

\begin{lemma}
\label{L-4-4}
There is a uniform constant $C$ such that
\begin{equation}\label{E-4-9}
\begin{split}
|u_i|^2&\leq C(u+2B),\\
R^T&\leq C(u+2B). 
\end{split}
\end{equation}
\end{lemma}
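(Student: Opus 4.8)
The plan is to prove both inequalities by the parabolic maximum principle applied to the two quotients
\[
H_1=\frac{|u_i|^2}{u+2B},\qquad H_2=\frac{N|u_i|^2+R^T}{u+2B},
\]
where $N>1$ is a fixed constant (e.g.\ $N=2$). Since $u\ge -B$ by Lemma~\ref{L-4-3}, the denominator satisfies $u+2B\ge B\ge 1>0$, so both quotients are smooth basic functions on $M\times[0,T]$ for every finite $T$. All quantities involved ($u$, $|u_i|^2$, $R^T=n-\t u$) are basic, so $\t=\t^T$ on them and the evolution identities \eqref{E-4-8} together with $\Box u=u+A$ (from \eqref{E-4-3}) apply directly; the bound on $A$ from Lemma~\ref{L-4-1} and the lower bound on $R^T$ from Proposition~\ref{P-4-2} are the only global inputs. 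I would run the maximum principle over $M\times[0,T]$: at $t=0$ the quotients are bounded by the fixed initial data, so it suffices to rule out large interior maxima, uniformly in $T$.

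First I would establish the gradient bound $|u_i|^2\le C(u+2B)$ using $H_1$. Writing $\Box H_1$ via the quotient rule, the cross-gradient term drops at a point where $\nabla H_1=0$, and \eqref{E-4-8} together with $\Box u=u+A$ gives there
\[
(u+2B)\,\Box H_1=-|u_{ij}|^2-|u_{i\bar j}|^2+H_1(2B-A).
\]
The naive reading only yields a lower bound for $H_1$, which is useless; the key is to extract a \emph{quadratic} gain from the Hessian terms. At such a critical point $\nabla|u_i|^2=H_1\nabla u$, so $|\nabla|u_i|^2|^2=H_1^2|u_i|^2$, and Cauchy--Schwarz yields $|u_{ij}|^2+|u_{i\bar j}|^2\ge \tfrac12 H_1^2$ wherever $|u_i|^2\neq 0$. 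Hence at an interior maximum $(u+2B)\Box H_1\le -\tfrac12H_1^2+H_1(2B-A)$, and $\Box H_1\ge 0$ there forces $H_1\le 2(2B-A)\le 2\big(2B+\sup_t|A|\big)$, a bound uniform in $T$.

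Next I would bound the transverse scalar curvature via $H_2$. Since the mixed Hessian terms cancel between the evolutions of $|u_i|^2$ and of $R^T$, one finds, at a point where $\nabla H_2=0$,
\[
(u+2B)\,\Box H_2=-N|u_{ij}|^2-(N-1)|u_{i\bar j}|^2-n+H_2(2B-A).
\]
Now the surviving good term $-(N-1)|u_{i\bar j}|^2$ is controlled \emph{algebraically}: Cauchy--Schwarz gives $|u_{i\bar j}|^2\ge \tfrac1n(\t u)^2=\tfrac1n(R^T-n)^2$. Using the gradient estimate from the previous step in the form $N|u_i|^2\le NC_1(u+2B)$, the identity $R^T=H_2(u+2B)-N|u_i|^2$ yields at the maximum point $R^T\ge(H_2-NC_1)(u+2B)\ge(H_2-NC_1)B$; thus large $H_2$ forces $R^T$ large and $(R^T-n)^2\gtrsim H_2^2B^2$. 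Substituting, the right-hand side becomes strictly negative once $H_2$ exceeds a uniform constant, contradicting $\Box H_2\ge 0$ at an interior maximum. Therefore $H_2\le C$, giving $R^T\le C(u+2B)$, and with $|u_i|^2\ge 0$ this is consistent with the gradient bound of the first step.

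The main obstacle, and the reason the two naive estimates fail, is exactly this sign problem: because $u$, $|u_i|^2$ and $R^T$ all evolve with the same $+1$ self-coefficient, the leading terms cancel in each quotient and the zeroth-order balance points the wrong way; the estimates only close because one can produce a \emph{quadratic} lower bound for the Hessian — from the critical-point identity for $H_1$, and from $|u_{i\bar j}|^2\ge \tfrac1n(\t u)^2$ combined with the gradient bound for $H_2$. Two technical points must still be verified: that \eqref{E-4-8} and the integrations by parts behind it are legitimate for basic tensors on the Sasakian manifold (this is where Proposition~\ref{P-3-i} and the identity $\t=\t^T$ on basic functions enter), and that the spatial maximum over the compact $M\times[0,T]$ is attained with the $t=0$ contribution controlled by fixed initial data, so that the constant $C$ is independent of $T$.
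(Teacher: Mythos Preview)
Your proposal is correct and follows essentially the same approach as the paper: both apply the maximum principle to the quotient $\dfrac{|u_i|^2}{u+2B}$ for the gradient bound and to a combination $\dfrac{2|u_i|^2-\t u}{u+2B}$ (your $H_2$ with $N=2$, up to the bounded shift $\dfrac{n}{u+2B}$) for the scalar-curvature bound, the key step in each case being to extract a quadratic lower bound for the Hessian terms. The only organizational difference is that the paper keeps the cross-gradient term $\dfrac{2\,\mathrm{Re}(u_{\bar j}H_j)}{u+2B}$ explicit and uses an $\epsilon$-splitting of it to produce the quartic term $-\dfrac{\epsilon}{2}\dfrac{|u_i|^4}{(u+2B)^3}$, whereas you substitute the critical-point identity $\nabla|u_i|^2=H_1\nabla u$ first and then invoke the Cauchy--Schwarz bound $|\nabla|u_i|^2|^2\le 2(|u_{ij}|^2+|u_{i\bar j}|^2)|u_i|^2$; these are equivalent bookkeeping choices.
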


\begin{proof}By Lemma \ref{L-4-3}, there is a uniform constant $B\geq 1$ such that $u(x, t)+B>0$. Denote
\[
H=\frac{|u_i|^2}{u+2B}. 
\]
Then straightforward computations (see \eqref{E-4-8}) give that
\begin{equation}\label{E-4-10}
\Box H=\frac{-|u_{ij}|^2-|u_{i\bar j}|^2}{u+2B}+\frac{|u_i|^2(2B-A)}{(u+2B)^2}+\frac{2\mbox{Re} (u_{\bar j} (|u_i|^2)_j)}{(u+2B)^2}-\frac{2|u_i|^4}{(u+2B)^3}.
\end{equation}

We compute
\[
H_j=\frac{(|u_i|^2)_j}{u+2B}-\frac{|u_i|^2 u_j}{(u+2B)^2}.\]
Hence
\begin{equation}\label{E-4-11}
\frac{\mbox{Re} (u_{\bar j} (|u_i|^2)_j)}{(u+2B)^2}-\frac{|u_i|^4}{(u+2B)^3}=\frac{\mbox{Re}(u_{\bar j}H_j)}{u+2B}.
\end{equation}

It is clear that
\[
|u_{\bar j}(|u_i|^2)_j|\leq |u_i|^2(|u_{ij}|+|u_{i\bar j}|).
\]
Then by Cauchy-Schwarz inequality, we compute
\begin{equation}\label{E-4-12}
\frac{|u_{\bar j}(|u_i|^2)_j|}{(u+2B)^2}\leq \frac{|u_i|^4}{2(u+2B)^3}+\frac{|u_{ij}|^2+|u_{i\bar j}|^2}{u+2B}.\end{equation}
Taking \eqref{E-4-10}, \eqref{E-4-11} and \eqref{E-4-12} into account, we can choose $\epsilon<1/4$, such that
\begin{equation}\label{E-4-13}
\Box H\leq \frac{|u_i|^2(2B-A)}{(u+2B)^2}-\frac{\epsilon}{2} \frac{|u_i|^4}{(u+2B)^3}+(2-\epsilon)\frac{\mbox{Re}(u_{\bar j}H_j)}{u+2B}.
\end{equation}
Suppose that
\[
\frac{|u_i|^2}{u+2B}\rightarrow \infty, ~\mbox{when}~ t\rightarrow \infty.
\]
Then there exists time $T$ such that
\begin{equation}\label{E-4-14}
\max_{M\times [0, T]}\frac{|u_i|^2}{u+2B}>2(2B-A)\epsilon^{-1}
\end{equation}
and the maximum is obtained at some point $(p, T)\in M\times [0, T]$. Since at $(p, T)$, 
\[
\Box H\geq 0, H_j=0,
\]
it then follows from \eqref{E-4-13} that
\[
0\leq \Box H\leq \frac{|u_i|^2}{(u+2B)^2}\left(2B-A-\frac{\epsilon}{2} \frac{|u_i|^2}{u+2B}\right),
\]
which contradicts \eqref{E-4-14}. Hence there exists a uniform constant $C$ such that
\[
|u_i|^2\leq C(u+2B). 
\]

Now we prove that $-\t u$ is bounded from above by $C(u+2B)$ for some  $C$. 
Let 
\[
K=\frac{-\t u}{u+2B}, \;\;G=K+\frac{2|u_i|^2}{u+2B}.
\]

Straightforward computations give that
\[
\Box K=\frac{|u_{i\bar j}|^2}{u+2B}+\frac{(-\t u)(2B-A)}{(u+2B)^2}+2\frac{\mbox{Re}(u_{\bar j}K_j)}{u+2B}
\]
and
\[
\Box G=\frac{-2|u_{ij}|^2-|u_{i\bar j}|^2}{u+2B}+\frac{(-\t u+2|u_i|^2)(2B-A)}{(u+2B)^2}+2\frac{\mbox{Re}(u_{\bar j})G_j}{u+2B}.
\]
Hence at the maximum point of $G$,
\[
0\leq \Box G\leq \frac{-|u_{i\bar j}|^2}{u+2B}+\frac{-\t u(2B-A)}{(u+2B)^2}+\frac{2(2B-A)|u_i|^2}{(u+2B)^2}.
\]
We can choose a local coordinate 
\[
(\t u)^2=(u_{i\bar i})^2\leq nu_{i\bar i}^2=n|u_{i\bar j}|^2.
\]
It then follows that
\[
-\frac{|u_{i\bar j}|^2}{u+2B}+\frac{-\t u(2B-A)}{(u+2B)^2}\leq \frac{-\t u}{u+2B}\left(\frac{2B-A}{u+2B}-\frac{-\t u}{n}\right).
\]
By the similar argument (the maximum principle as above), it is clear that
\[
\frac{-\t u}{u+2B}\leq C
\]
for some uniform constant $C$. Since $R^T=n-\t u$, the proof is complete. 
\end{proof}

In view of Lemma \ref{L-4-4}, we need to bound the Ricci potential from above to bound the scalar curvature and $|\nabla u|$; this bound can be obtained once we can bound the diameter of the manifold along the flow.
\begin{prop}\label{P-4-5}There is a uniform constant $C$ such that
\[
\begin{split}
u(y, t)\leq C d_t^2(x, y)+C,\\
R^T(y, t)\leq Cd_t^2(x, y)+C,\\
|\nabla u|\leq Cd_t(x, y)+C,
\end{split}
\]
where $d_t$ is the distance function with respect to $g(t)$ and  $u(x, t)=\min_{y\in M} u(y, t)$. 
\end{prop}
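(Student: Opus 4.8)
The plan is to upgrade the pointwise gradient estimate of Lemma \ref{L-4-4} into a Lipschitz bound for a suitable power of $u$, and then integrate along geodesics. By Lemma \ref{L-4-3} we have $u+2B>0$ everywhere, so $w=\sqrt{u+2B}$ is a well-defined smooth function. Since $u$ is basic we have $|\nabla u|^2=|\nabla^T u|^2=|u_i|^2$, so the full Riemannian gradient of $u$ with respect to $g(t)$ coincides with its transverse gradient and is therefore controlled by the first estimate of Lemma \ref{L-4-4}. This gives
\[
|\nabla w|^2=\frac{|\nabla u|^2}{4(u+2B)}\leq \frac{C(u+2B)}{4(u+2B)}=\frac{C}{4},
\]
so $w$ is uniformly Lipschitz with respect to $g(t)$, with a Lipschitz constant independent of $t$.

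Next I would fix $t$, take $x$ to be a point where $u(\cdot,t)$ attains its minimum, and join $x$ to an arbitrary $y$ by a minimizing $g(t)$-geodesic of length $d_t(x,y)$. Integrating the bound $|\nabla w|\leq \sqrt{C}/2$ along this geodesic yields
\[
\sqrt{u(y,t)+2B}=w(y)\leq w(x)+\frac{\sqrt{C}}{2}\,d_t(x,y)=\sqrt{u(x,t)+2B}+\frac{\sqrt{C}}{2}\,d_t(x,y).
\]

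To conclude I must bound $w(x)=\sqrt{u(x,t)+2B}$ by a uniform constant, which is the one genuinely new observation. Since the volume of $(M,g(t))$ equals $1$ along the flow, the normalization \eqref{E-4-2} reads $\int_M e^{-u}\,dV_g=\int_M dV_g$, so $e^{-u}-1$ cannot be strictly negative everywhere; hence there is a point where $u\leq 0$, and therefore $u(x,t)=\min_{y}u(y,t)\leq 0$. Thus $w(x)\leq\sqrt{2B}$ uniformly. Squaring the displayed inequality and using $(a+b)^2\leq 2a^2+2b^2$ then gives $u(y,t)\leq C\,d_t^2(x,y)+C$. Feeding this back into the two estimates of Lemma \ref{L-4-4}, namely $R^T\leq C(u+2B)$ and $|\nabla u|^2\leq C(u+2B)$, and using $\sqrt{a^2+b^2}\leq a+b$ for the last one, produces the stated bounds for $R^T$ and $|\nabla u|$. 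I expect the only real content to be the passage from the pointwise estimate of Lemma \ref{L-4-4} to the metric Lipschitz statement together with the uniform upper bound $\min_y u\leq 0$; once $w$ is known to be uniformly Lipschitz and the minimum of $u$ is controlled, the remaining steps are purely algebraic.
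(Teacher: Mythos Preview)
Your argument is correct and follows essentially the same route as the paper: pass from Lemma~\ref{L-4-4} to a uniform Lipschitz bound for $\sqrt{u+2B}$, use the normalization $\int_M e^{-u}\,dV_g=1$ together with the constancy of the volume along the flow to bound $\min u$ from above, integrate along a geodesic, and then feed the resulting bound on $u$ back into Lemma~\ref{L-4-4}. The only cosmetic difference is that the paper records $\min u\le C$ rather than your sharper $\min u\le 0$ (which relies on the volume being normalized to $1$), but this does not affect the argument.
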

\begin{proof}
We assume that $u+B\geq 0$ for some positive constant $B\geq 1$. Note that
$|\nabla u|^2=|u_i|^2$ since $u$ is basic.
By Lemma \ref{L-4-4}, 
\[
|\nabla \sqrt{u+2B}|\leq C.
\]
Hence 
\[
\sqrt{u(y, t)+2B}-\sqrt{u(z, t)+2B}\leq C d_t(y, z) .
\]
Note that by the normalization condition,
\[
u(x, t)=\min_{y\in M} u(y, t)\leq C,\]
where $C$ is a uniform constant.  

It then follows that
\[
\sqrt{u(y, t)+2B}\leq Cd_t(y, x)+\sqrt{2B}. \]
Hence there is a uniform constant $C$ such that
\begin{equation}\label{E-4-15}
u(y, t)\leq C d_t^2(x, y)+C. 
\end{equation}
The other two estimates are direct consequence of Lemma \ref{L-4-4} and \eqref{E-4-15}. 
\end{proof}
So far the discussions above are pretty much the same as in the K\"ahler setting \cite{Sesum-Tian}. But we shall see that the Sasakian structure plays a crucial in the following discussion.

\section{The Bound on Diameter --- Regular or Quasi-regular Case}

In this section we consider that $(M, \xi, g)$ is regular or quasi-regular as a Sasakian structure.
We shall use the $\cW$ functional on Sasaki manifolds to derive some non-collapsing results along the Sasaki-Ricci flow and then prove that the diameter is uniformly bounded along the Sasaki-Ricci flow. This result can be viewed as generalization of Perelman's results on K\"ahler-Ricci flow to K\"ahler orbifolds which are underlying K\"ahler orbifolds of Sasaki manifolds.  However the smooth Sasakian structure makes it possible to get these results  without detailed discussion of the orbifold singularities on the underlying K\"ahler orbifolds. In particular, one can allow the singular set to be co-dimension 2.   The discussion  follows closely the K\"ahler case \cite{Sesum-Tian} except that  the $\cW$ functional in the Sasaki-Ricci flow  involves only basic functions while the distance function of a Sasakian metric is not basic. To overcome this difficulty, we shall explore the relation of the Sasakian structure with its distance function. 

When  $\xi$ is fixed and the metrics are under deformation generated by  basic potentials $\phi$ such that
\[
\eta_\phi=\eta+d_B^c \phi,
\] 
the distance  along the $\xi$ direction does not change under the deformation. 
Hence we shall introduce the {\it transverse distance}  adapt to the Sasakian structures with $\xi$ fixed  when $M$ is regular or quasi-regular.  Recall that the Reeb vector field $\xi$ defines a foliation of $M$ through its orbits.  
The orbits are compact when $(M, \xi, g)$ is regular or quasi-regular. Moreover, $M$ is a principle $S^1$  bundle (or orbibundle) on a K\"ahler manifold (or orbifold) $Z$ such that $\pi: (M, g)\rightarrow (Z, h)$ is a orbifold Riemannian submersion and 
\[
g=\pi^* h+\eta\otimes \eta.
\]

When $(M, \xi, g)$ is regular (or quasi-regular), the transverse distance is closely related to the distance function on the K\"ahler manifold (or orbifold) $Z$, and the results in the Sasaki-Ricci flow on $M$ can be viewed as the corresponding results in the K\"ahler-Ricci flow on $Z$, even though we shall not deal with the K\"ahler-Ricci flow for the orbifold $Z$ directly.   Nevertheless we can apply the similar arguments as in the Ricci flow and the K\"ahler-Ricci flow, for example, see \cite{Perelman01,  Sesum-Tian, KL}  for more details.

Note that the orbit  $\xi_x$ is compact for any $x$. We can define the {transverse distance} function as
\begin{equation}\label{E-5-1}
d^T_g(x, y):=d( \xi_x,  \xi_y),\end{equation}
where $d$ is the distance function defined by $g$. Note that $d^T_g$ is not a distance function on $M$ and we shall use $d^T(x, y)$ when there is no confusion. We also define the {\it transverse diameter} of a Sasaki structure $(M, \xi, g)$ as
\[
d^T_g=\max_{x, y\in M} d^{T}_g(x, y).
\]

\begin{prop}Let $(M, \xi, g)$ be a regular or quasi-regular Sasaki structure. For any point $p\in \xi_x$, there exists a point $q\in \xi_y$ such that
\[
d^T(x, y)=d(p, q)
\]
In particular, 
\begin{equation}\label{E-3-5d}
d^T(x, y)=d(x, \xi_y)=d(y, \xi_x).
\end{equation}
As a consequence, 
the transverse distance function $d^T$  satisfies the triangle inequality, 
\begin{equation}\label{E-5-2}
d^T(x, z)\leq d^T(x, y)+d^T(y, z).
\end{equation}
Hence for any $p\in M$, $d^T(p, x)$ is a basic Lipschitz function such that \[\langle \xi, \nabla d^T(p, x)\rangle=0.\] Moreover, let $\pi: (M, g)\rightarrow  (Z, h)$ be the (orbifold) Riemannian submersion, we have 
\begin{equation}\label{E-5-d}
d^T(x, y)=d_h(\pi(x), \pi(y)). 
\end{equation}

\end{prop}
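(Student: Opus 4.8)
The plan is to reduce every assertion to elementary properties of the ambient Riemannian distance $d$, the key leverage being that the Reeb flow acts by isometries that preserve each leaf of $\cF_\xi$.

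First I would establish \eqref{E-3-5d} together with the attainment statement. Since $\xi$ is Killing and the structure is regular or quasi-regular, its flow $\varphi_t$ is a one-parameter group of isometries of $(M,g)$, and each orbit $\xi_w$ is $\varphi_t$-invariant (flowing along $\xi$ keeps one on the same orbit). Fix $x,y$ and let $p=\varphi_t(x)\in\xi_x$. As $\varphi_t$ is an isometry with $\varphi_t(\xi_y)=\xi_y$,
\[
d(p,\xi_y)=d(\varphi_t(x),\varphi_t(\xi_y))=d(x,\xi_y),
\]
so $p\mapsto d(p,\xi_y)$ is constant on $\xi_x$. Therefore
\[
d^T(x,y)=\inf_{p\in\xi_x}d(p,\xi_y)=d(x,\xi_y),
\]
and by symmetry of $d$ this also equals $d(y,\xi_x)$. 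Because $\xi_y$ is compact, $d(p,\xi_y)=\inf_{q\in\xi_y}d(p,q)$ is attained at some $q\in\xi_y$; combined with the constancy above this gives $d^T(x,y)=d(p,q)$ for the prescribed $p$, which is the first assertion.

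The triangle inequality \eqref{E-5-2} then follows formally. Given $x,y,z$, I would pick $q\in\xi_y$ with $d(x,q)=d^T(x,y)$, and then $r\in\xi_z$ with $d(q,r)=d(q,\xi_z)=d^T(y,z)$ (here $d(q,\xi_z)=d^T(y,z)$ since $q\in\xi_y$). The triangle inequality for $d$ then yields
\[
d^T(x,z)=d(x,\xi_z)\le d(x,r)\le d(x,q)+d(q,r)=d^T(x,y)+d^T(y,z).
\]
For the basic Lipschitz claim, I note that $d^T(p,\cdot)$ depends only on the orbit through its second argument, hence is constant along the leaves of $\cF_\xi$; so $\xi\,d^T(p,\cdot)=0$, which means $d^T(p,\cdot)$ is basic and $\langle\xi,\nabla d^T(p,\cdot)\rangle=0$ wherever the gradient exists (by Rademacher's theorem, the function being Lipschitz). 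The Lipschitz estimate itself is immediate from $|d^T(p,x)-d^T(p,y)|\le d^T(x,y)\le d(x,y)$, using \eqref{E-5-2} and the trivial bound $d^T\le d$.

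Finally, to prove \eqref{E-5-d} I would use that $\pi:(M,g)\to(Z,h)$ is a Riemannian submersion with $g=\pi^*h+\eta\otimes\eta$ and fibres $\xi_w$. For any curve $\gamma$ in $M$ the orthogonal splitting gives $|\dot\gamma|_g^2=|d\pi(\dot\gamma)|_h^2+\eta(\dot\gamma)^2\ge|d\pi(\dot\gamma)|_h^2$, so $\pi\circ\gamma$ is no longer than $\gamma$ and hence $d_h(\pi(x),\pi(y))\le d^T(x,y)$. Conversely, horizontally lifting a near-minimizing curve from $\pi(x)$ to $\pi(y)$ (using $\eta$ as connection form) yields a curve in $M$ from $x$ to a point of $\xi_y$ of equal length, whence $d^T(x,y)=d(x,\xi_y)\le d_h(\pi(x),\pi(y))$; the two inequalities give \eqref{E-5-d}. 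The step I expect to be the main obstacle is precisely this horizontal-lifting argument in the quasi-regular case, where $Z$ is only an orbifold: I would handle it by carrying out the argument on the smooth total space $M$, where the Reeb $S^1$-action is genuinely isometric, and by using that the orbifold singular set has codimension at least two, so minimizing curves downstairs may be taken to avoid it. Everything else is a formal consequence of the isometric, leaf-preserving Reeb flow used in the first step.
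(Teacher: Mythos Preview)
Your proof is correct and follows essentially the same route as the paper: both exploit that the Reeb flow acts by isometries preserving each orbit to get the constancy of $d(\cdot,\xi_y)$ along $\xi_x$ and hence \eqref{E-3-5d}, derive the triangle inequality by concatenating realizers through a common point on $\xi_y$, and prove \eqref{E-5-d} via length comparison under the Riemannian submersion together with horizontal lifts. The only cosmetic difference is that the paper argues with minimizing geodesics (using the first variation to see they are horizontal) while you work with arbitrary curves, which is slightly more elementary but amounts to the same thing.
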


\begin{proof}
Proposition \ref{E-5-1} is indeed a consequence of results of Reinhart \cite{Reinhart} and Molino \cite{Molino} on Riemannian foliations (dimension $1$) and bundle-like metrics ; see, for example, Section 3 (Proposition 3.5, Lemma 3.7, Proposition 3.7) in \cite{Molino}.  Molino proved not only the orbifold structure of foliation space $M/\cF_\xi=Z$, but constructed the local (orbifold) coordinates of $(Z, h)$ from some tubular neighborhood of orbits of $\xi$.  
With this construction it is not hard to check that \eqref{E-5-d} holds.

We shall give a proof for completeness. 
Since $\xi_x$ and $\xi_y$ are both compact, there exist $\bar x\in \xi_x$, $\bar y\in \xi_y$, such that 
\[
d(\bar x, \bar y)=d^T(x, y).
\]
Now we want to prove that for any $p\in \xi_x$, there exists $q\in \xi_y$ such that $d(p, q)=d^T(x, y)$. 
Let $\gamma: [0, 1]\rightarrow M$ be the geodesic  such that $\gamma(0)=\bar x$, $\gamma(1)=\bar y$, and the length of $\gamma$ is $d(\bar x, \bar y)$. 
Recall that $\xi$ generates a $S^1$ action on $(M, g)$ and $g$ is invariant under the action. 
For any $p\in \xi_x$, there exists an isomorphism $\lambda: (M, g)\rightarrow (M, g)$, which is an element in $S^1$ generated by $\xi$, such that, $\lambda (\bar x)=p$. Let $q=\lambda (\bar y)$ and $\bar \gamma =\lambda \circ \gamma$. Since $\lambda^{*} g=g$, $\bar \gamma$ is also a geodesic which connects $p$ and $q$ such that the length of $\bar \gamma$ is the same as the length of $\gamma$.  It follows that  $d(p, q)=d^T(x, y)$. As a direct consequence, \eqref{E-3-5d} holds.  

For $x, y, z\in M$, we can choose $o\in \xi_x, p\in \xi_y$ and $q\in \xi_z$  such that
\[
d(o, p)=d^T(x, y)~\mbox{and}\; d(p, q)=d^T(y, z). 
\]
It then follows that
\begin{equation}\label{E-6-triangle}
d^T(x, y)+d^T(y, z)=d(o, p)+d(p, q)\geq d(o, q)\geq d^T(x, z). 
\end{equation}
In particular, \eqref{E-6-triangle}  together with $d^T(x, y)\leq d(x, y)$ for any $x, y\in M$ imply that $d^T$ is a Lipschitz function on $M$. For any $p\in M$ fixed, $f(x)=d^T(p, x)$ is constant for $x$  along the geodesics (the orbits) generated by $\xi$, hence it is a (Lipschitz) basic function.

Now we prove \eqref{E-5-d}. Suppose $p\in \xi_x, q\in \xi_y$ such that $d^T(p, q)=d(p, q)=d(\xi_x, \xi_y)$. Let $\gamma(t)$ be one shortest geodesic such that $\gamma(0)=p, \gamma(1)=q$. By the first variation formula of  geodesics,  we have $\langle\dot \gamma(1), \xi\rangle=0$ at $T_qM$. By Proposition 1 in \cite{Reinhart}, $\dot \gamma(t)\perp \xi$ for any $t$.  Hence $\gamma(t)$ is an orthogonal geodesic such that the projection  $\pi(\gamma(t))$ is also a geodesic in $(Z, h)$, see \cite{Molino} Proposition 3.5 for example.  
It is clear that $\gamma$ only intersects any orbit of $\xi$ at most once, it follows that $\pi(\gamma)$ does not intersect with itself. Moreover the length of $\gamma$ (with respect to $g$) is equal to the length of $\pi(\gamma)$ with respect to $h$.  This gives $d^T(p, q)\geq d_h(\pi(p, \pi(q)))$. 

Any geodesic in $(Z, h)$ can locally be lifted up to an orthogonal geodesic on $(M, g)$ which has the same length.  
Suppose for $\pi(p)\neq \pi(q)\in Z$, let  $\gamma(t)$ be a shortest geodesic connecting them. We can divide $\gamma$ into short segments such that each segment has a lift (an orthogonal geodesic) in $(M, g)$. Note that  any orthogonal geodesic in $(M, g)$ can be slid along the orbit of $\xi$. So we can construct a lift of $\gamma$ from the local lifts by sliding them. This gives  a piece-wise curve  in $M$ which connects $p$ and a point $\tilde q$ in $\xi_q$ and it has the same length as $\gamma$. This gives  $d^T(p, q)\leq d_h(\pi(p), \pi(q))$. Hence it completes the proof. 
 \end{proof}

We can define a {\it transverse ball} $B_{\xi, g}(x, r)$ as follows,
\begin{equation}\label{L-5-1a}
B_{\xi, g}(x, r)=\{y: d^T(x, y)<r\}.
\end{equation}
By Proposition \ref{E-5-1}, we have
\begin{equation}
B_{\xi, g}(x, r)=\{y: d(\xi_x, y)<r\}. 
\end{equation}

The following non-collapsing theorem for a transverse ball  holds along the Sasaki-Ricci flow,  similar to the Ricci flow (K\"ahler-Ricci flow).

\begin{lemma}\label{L-5-2}
Let $(M, \xi, g_0)$ be a regular or quasi-regular Sasaki structure and let $g(t)$ be the solution of the Sasaki-Ricci flow with the initial metric $g_0$. Then there exists a positive constant $C$ such that for every $x\in M$, if $R^T\leq Cr^{-2}$ on $B_{\xi, g(t)}(x, r)$ for $r\in (0, r_0]$, where $r_0$ is a fixed sufficiently small positive number, then 
\[
Vol(B_{\xi, g(t)}(x, r))\geq Cr^{2n}. 
\]
\end{lemma}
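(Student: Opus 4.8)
The plan is to adapt Perelman's $\kappa$-noncollapsing argument to the transverse setting, using the monotonicity of the $\mu$ functional (Proposition~\ref{P-3-6}) together with a logarithmic Sobolev inequality. The conceptual point that makes this possible in the Sasakian category is that, although the distance function of $g(t)$ is not basic, the transverse distance $d^T(x,\cdot)=d_h(\pi(x),\pi(\cdot))$ is a basic Lipschitz function with $\langle\xi,\nabla d^T\rangle=0$ and $|\nabla d^T|\leq 1$ (the last two properties coming from the previous Proposition and the fact that $\pi$ is a Riemannian submersion for each $g(t)$). Consequently any cutoff of $d^T(x,\cdot)/r$ is an admissible basic test function for $\cW$, and the whole argument can be run against the transverse balls $B_{\xi,g(t)}(x,r)$.

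First I would fix the scale by setting $\tau=r^2$ and secure a uniform lower bound for $\mu$. On the fixed initial manifold $(M,g_0)$ the transverse scalar curvature $R^T$ is bounded, and the standard logarithmic Sobolev inequality holds for all functions, in particular for basic ones (for which $|\nabla w|^2=|\nabla^T w|^2$); together with the usual Euclidean scaling as $\tau\to 0$ this yields a constant $C_0$ with
\[
\mu(g_0,\tau_0)\geq -C_0\quad\text{for all }\tau_0\in(0,1].
\]
Given $t$ and $r\in(0,r_0]$ with $r_0\leq 1$, I would choose $\tau_0=(r^2-1)e^{-t}+1\in(r^2,1)$, so that the solution of $\p_t\tau=\tau-1$ starting at $\tau_0$ reaches $r^2$ at time $t$; Proposition~\ref{P-3-6} then gives $\mu(g(t),r^2)\geq\mu(g_0,\tau_0)\geq -C_0$.

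Next I would estimate $\cW$ on the test function. Writing $\cW$ in the form \eqref{E-3-22}, take $w=e^{c}\phi(d^T(x,\cdot)/r)$ with $\phi$ a fixed cutoff, $\phi\equiv 1$ on $[0,1/2]$ and $\phi\equiv 0$ on $[1,\infty)$, and with $c$ fixed by the constraint $\int_M w^2\tau^{-n}dV=1$, i.e. $e^{2c}=r^{2n}/\int_M\phi^2dV$. Since $w$ is supported in $B_{\xi,g(t)}(x,r)$, the hypothesis $R^T\leq Cr^{-2}$ makes the curvature term $\tau\int_M R^Tw^2\tau^{-n}dV$ bounded; the bound $|\nabla d^T|\leq 1$ controls the gradient term $4\tau\int_M|\nabla w|^2\tau^{-n}dV$ by the volume of the annulus where $\phi'\neq 0$; and the entropy term produces, after using $\log w^2=2c+\log\phi^2$ and $-2c=\log(\int_M\phi^2dV/r^{2n})$, the main contribution $\log(\mathrm{Vol}(B_{\xi,g(t)}(x,r))/r^{2n})$. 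Modulo the error terms discussed below, this gives
\[
-C_0\leq\mu(g(t),r^2)\leq\cW(g(t),w,r^2)\leq\log\frac{\mathrm{Vol}(B_{\xi,g(t)}(x,r))}{r^{2n}}+C,
\]
and hence $\mathrm{Vol}(B_{\xi,g(t)}(x,r))\geq Cr^{2n}$.

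The step I expect to be the main obstacle is the control of the error terms generated by the cutoff: both the gradient term and the tail of the entropy term are naturally estimated by the volume ratio $\mathrm{Vol}(B_{\xi,g(t)}(x,r))/\mathrm{Vol}(B_{\xi,g(t)}(x,r/2))$, which is not bounded a priori along the flow. I would dispose of this by a dyadic argument: if this ratio is large at scale $r$, then the normalized volume $\mathrm{Vol}(B_{\xi,g(t)}(x,\rho))/\rho^{2n}$ is strictly smaller at $\rho=r/2$, whereas for the fixed smooth metric $g(t)$ this quantity tends to a positive limit as $\rho\to 0$ (small transverse balls are almost Euclidean, and the $\xi$-orbits have length bounded below since $|\xi|_{g(t)}\equiv 1$ with fixed periods). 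Hence volume doubling must hold at some scale in $(0,r]$, at which the clean estimate above applies, and the resulting bound then propagates back up to scale $r$. It is precisely here that the exponent $2n$, rather than $2n+1$, appears: the normalization $\tau^{-n}$ in $\cW$ measures only the transverse directions, the compact orbit direction contributing a bounded factor.
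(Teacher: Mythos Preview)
Your proposal is correct and follows essentially the same route as the paper's proof: both use the basic Lipschitz cutoff of the transverse distance $d^T$ as the test function in $\cW$, invoke the monotonicity of $\mu$ from Proposition~\ref{P-3-6} with $\tau(t)=r^2$, and handle the error terms by a dyadic reduction to a scale where the transverse volume doubling $V(r)\leq 5^{n}V(r/2)$ holds (using that $V(\rho)/V(\rho/2)\to 4^{n}$ as $\rho\to 0$ via the Riemannian submersion onto the orbifold $Z$).

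The only noteworthy difference is one of packaging. The paper runs the argument by contradiction: it extracts a sequence with $\mathrm{Vol}(B_k)r_k^{-2n}\to 0$, passes to the dyadic subscale where doubling holds (the normalized volume only decreases further), obtains $\cW_k\to -\infty$, and contradicts the finiteness of $\mu(g_0,\tau_0^k)$ using merely that $\tau_0^k\to 1$ and $\mu(g_0,\cdot)$ is continuous there. Your direct version instead requires a uniform lower bound $\mu(g_0,\tau_0)\geq -C_0$ for all $\tau_0\in(0,1]$; this is true (it follows from the logarithmic Sobolev inequality on the fixed compact manifold $(M,g_0)$), but it is an extra input that the paper's contradiction argument sidesteps. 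In exchange, your formulation avoids the case split ``$t_k$ bounded vs.\ $t_k\to\infty$'' and gives the estimate at every time $t$ in one stroke. Your back-propagation step is also correct: when doubling fails on $[r_{i_0},r]$ one has $V(r)/r^{2n}>(5/4)^{ni_0}V(r_{i_0})/r_{i_0}^{2n}\geq\kappa$, so the bound at the small scale indeed implies the bound at the original scale.
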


\begin{proof}Let $g(t)$ be the solution of Sasaki-Ricci flow. Since $R^T$ is bounded from below, we can assume that $r_0$ is small enough such that $R^T(x, t)\geq -r_0^{-2}$ for any $x, t$.
We argue  by contradiction; suppose the result is not true, then there exists  sequences $(p_k, t_k)\in M\times [0, \infty)$ and $t_k\rightarrow \infty$ such that $R^T\leq Cr_k^{-2}$ in 
$B_k=B_{\xi, g(t_k)}(p_k, r_k)$, but $Vol(B_k)r_k^{-2n}\rightarrow 0$ as $k\rightarrow \infty$. Let $\Phi$ be the
cut-off function $\Phi: [0, \infty)\rightarrow [0, 1]$ such that  $\Phi$ equals $1$ on $[0, 1/2]$, decreases on $[1/2, 1]$ with derivative bounded by $4$ and equals $0$ on $[1, \infty)$. 
Denote $\tau_k=r_k^2$ and define
\[
w_k=e^{C_k}\Phi(r_k^{-1}d^T_k(p_k, x)),
\]
where $d^T_k$ is the transverse distance with respect to time $t_k$ and $C_k$ is the constant such that
\[
1=\int_M w_k^2 \tau_k^{-n}dV_k=e^{2C_k}r_k^{-2n}\int_M \Phi^2 (r_k^{-1}d^T_k(p_k, x))dV_k\leq e^{2C_k}r_k^{-2n}Vol(B_k),
\] 
where we use $dV_k$ to denote the volume element with respect to $g(t_k)$.
Hence $C_k\rightarrow \infty$ since $Vol(B_k)r_k^{-2n}\rightarrow 0$ when $k\rightarrow \infty$. 
We compute (with $g_k=g(t_k)$)
\begin{equation}\label{E-5-3}
\begin{split}
\cW_k&= \tau_k^{-n}\int_M (r_k^2 (R^T w_k^2+4|\nabla^T w_k|^2)-w_k^2\log w_k^2)dV_k\\
&\leq e^{2C_k}r_k^{-2n}\int_M (4|\Phi^{'}_k|^2-\Phi_k^2\log \Phi_k^2)dV_k+r_k^2\max_{B_k}R^T-2C_k,
\end{split}
\end{equation}
where $\cW_k=\cW(g_k, w_k, \tau_k)$ and $\Phi_k=\Phi(r_k^{-1}d^T_k(p_k, x))$.
Let
\[
B_k(r)=B_{\xi, g_k}(p_k, r),\; \mbox{and}\; V_k(r)=Vol(B_k(r)). 
\]
 For any $k$ fixed,
\begin{equation}\label{E-volume}
\lim_{r\rightarrow 0} \frac{V_k(r)}{V_k(r/2)}=2^{2n}=4^n. 
\end{equation}
The proof of \eqref{E-volume} can run as follows. Recall that $\pi: M\rightarrow Z$ is a Riemannian submersion over the orbifold $Z$ and $M$ is the $S^1$ principle orbibundle over $Z$.  For any $p_k\in M$,   $\pi(p_k)\in \pi(B_k(r))\subset Z$ and $\pi(B_k(r))$ is the geodesic $r$ ball centered at $\pi(p_k)$ with respect to $h_k$.  Hence when $r$ small enough, $B_k(r)$ is a trivial $S^1$ bundle over the geodesic $r$ ball $B_{h_k}(r)$ of $(Z, h_k)$ centered at $\pi(p_k)$. $\pi(p_k)$ can be either a smooth point or an orbifold singularity in $Z$. Nevertheless, we have, 
\[
\lim_{r\rightarrow 0}\frac{Vol(B_{h_k}(r))}{Vol(B_{h_k}(r/2))}=2^{2n}=4^n. 
\]
Note that the orbifold singularities in $Z$ is a measure zero set and does not contribute when we compute volume. 
Without loss of generality, we can assume that $B_{h_k}(r)$ contains only smooth points. Otherwise, we can only consider the set $\Sigma$ of smooth points in $B_{h_k}(r)$. 
For generic points (smooth points on $Z$), any $S^1$ fibre has the same length. We denote the length of a generic $S^1$ fibre by $l$.  Note that we have  
\[
g_k=\pi^*h_k+\eta_k\otimes \eta_k, 
\]
and $dvol_{g_k}=\pi^{*}(dvol_{h_k})\wedge \eta_k$.  It follows that
\[Vol_k(B_k(r))=\int_{B_k(r)}dvol_{g_k}=\int_{B_{h_k}(r)\times S^1} \pi^{*}(dvol_{h_k})\wedge \eta_k= l\times Vol(B_{h_k}(r))). \]
This proves \eqref{E-volume}.

We can assume that, in addition, 
\begin{equation}\label{E-5-4}
V_k(r_k) \leq 5^n V_k(r_k/2).
\end{equation}
Otherwise, 
 let $r_k^{i}=2^{-i}r_k$ for $i\in \mathbb{N}$. By \eqref{E-volume}, we can choose $i_0$ to be the smallest number such that
\[
V_k(r_k^{i_0})\leq 5^n V_k(r_k^{i_0}/2).
\]

Hence for any $i\leq  i_0-1$,
\begin{equation}\label{E-0-6}
V_k(r_k^i)>5^nV_k(r_k^{i}/2)=5^nV_k(r_k^{i+1}). 
\end{equation}
By using \eqref{E-0-6} repeatedly, we can get that 
\[
\left(r_k^{i_0}\right)^{-2n}V_k(r_k^{i_0})\leq \left(\frac{4}{5}\right)^{ni_0} r_k^{-2n}V_k(r_k)\rightarrow 0
\]
when $k\rightarrow \infty$. Hence we can replace $r_k$ by $r_k^{i_0}$, which satisfies \eqref{E-5-4}  in addition. We then compute, by \eqref{E-5-4},
\[\begin{split}
\int_M (4|\Phi^{'}_k|^2-\Phi_k^2\log \Phi_k^2)dV_k&\leq C(V_k(r_k)-V_k(r_k/2))\\
&\leq C5^nV_k(r_k/2)\\
&\leq C5^n \int_M \Phi_k^2 dV_k.
\end{split}
\]
Hence we compute, by \eqref{E-5-3} and the above,
\[
\cW_k\leq C5^n\tau_k^{-n}\int_M w_k^2dV_k+C-2C_k\leq C-2C_k.
\]
Note that $\mu(g(t), 1+(\tau_0-1)e^t)$ is non-decreasing function on $t$.  Choose $\tau_0^k=1-(1-r_k^2)e^{-t_k}$, then we get that
\[
\mu(g(0), \tau_0^k)\leq \mu(g(t_k), r_k^2)\leq \cW_k\rightarrow-\infty. 
\]
Contradiction since $\mu(g(0), \tau)$ is a continuous function of $\tau$ in $(0, \infty)$ and $\tau_0^k\rightarrow 1$ when $k\rightarrow \infty$. 

\end{proof}

Lemma \ref{L-5-2} is not purely local (it is global in $\xi$ direction); while $\xi$ generates isometries of $g$, one can further obtain local non-collapsing results and it should be viewed as the corresponding non-collapsing result of the K\"ahler-Ricci flow on K\"ahler orbifolds.  Now we can bound the diameter of the manifold along the Sasaki-Ricci flow. 

\begin{theo}\label{T-5-3}Let $(M, \xi, g_0)$ be a regular or quasi-regular Sasaki structure and let $g(t)$ be the solution of the Sasaki-Ricci flow with the initial metric $g_0$. 
Then the transverse diameters $d^T_{g(t)}$ are uniformly bounded. As a consequence there is a uniform constant $C$ such that $diam(M, g(t))\leq C$. 
\end{theo}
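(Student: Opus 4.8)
The plan is to bound the transverse diameter $d^T_{g(t)}$ first and then recover the bound on $\mathrm{diam}(M,g(t))$. For the easier second step I would use that along the Sasaki-Ricci flow the Reeb field $\xi$ and the foliation $\cF_\xi$ are fixed and $g(\xi,\xi)=1$ is preserved (since $\eta_\phi(\xi)=1$ and $g^T(\xi,\cdot)=0$); hence in the regular or quasi-regular case every Reeb orbit is a closed geodesic whose length equals its period, and these periods are uniformly bounded by a constant $L$ independent of $t$. Combining this with \eqref{E-3-5d}, for any $x,y$ I pick the nearest point $y'\in\xi_y$ to $x$, so that $d_{g(t)}(x,y')=d^T(x,y)$ and $d_{g(t)}(y',y)\le L$, whence $d_{g(t)}(x,y)\le d^T_{g(t)}(x,y)+L\le d^T_{g(t)}+L$. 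Thus a uniform bound on the transverse diameter yields $\mathrm{diam}(M,g(t))\le C$.

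For the transverse diameter I would argue by contradiction, following the K\"ahler scheme of \cite{Sesum-Tian, Perelman01}. Suppose there is a sequence of times $t_k$ with $d^T_{g(t_k)}\to\infty$. Let $x_k$ be the minimum point of the transverse Ricci potential $u(\cdot,t_k)$ and choose $y_k$ with $d^T(x_k,y_k)\ge\tfrac12 d^T_{g(t_k)}\to\infty$. By \eqref{E-5-d} the transverse distance coincides with the Riemannian distance on the leaf space $(Z,h(t_k))$, the induced flow there is the K\"ahler-Ricci flow, and the total volume $\mathrm{Vol}(M,g(t_k))=l\,\mathrm{Vol}(Z,h(t_k))$ is a fixed constant; I can therefore run the volume count either upstairs via transverse balls $B_{\xi,g}$ or downstairs on $Z$. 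The two key inputs are Proposition \ref{P-4-5}, which gives $R^T(\cdot)\le C\,d^T(x_k,\cdot)^2+C$, and the non-collapsing Lemma \ref{L-5-2}.

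The heart of the proof is turning non-collapsing into a volume lower bound that diverges. I would take a minimal transverse geodesic $\gamma$ from $x_k$ to $y_k$ and, in each dyadic shell $\{2^j\le d^T(x_k,\cdot)<2^{j+1}\}$, observe that $R^T\le C4^{j}$ there; hence Lemma \ref{L-5-2} applies to transverse balls of radius $r_j\sim 2^{-j}$ centred along $\gamma$, giving each such ball volume at least $\kappa\,r_j^{2n}$. Packing disjoint balls of this size across all shells up to $d^T(x_k,y_k)$ accumulates a definite amount of volume inside $B_{\xi,g}(x_k,\tfrac12 d^T_{g(t_k)})$; as $k\to\infty$ this lower bound should tend to infinity, contradicting the constancy of $\mathrm{Vol}(M,g(t))$.

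The main obstacle is precisely this volume bookkeeping, and it is delicate because Proposition \ref{P-4-5} controls $R^T$ only by the \emph{square} of the distance: the radius for which Lemma \ref{L-5-2} is usable shrinks like the reciprocal of the distance to $x_k$, so the balls one packs get smaller moving outward. Summing a single geodesic's worth of balls forces divergence cleanly only in low dimension, and in general I expect to need the full transverse directions of each shell — equivalently a reverse-doubling or annular-volume estimate of the form $\mathrm{Vol}(B_{\xi,g}(x_k,2r))\ge(1+\epsilon)\,\mathrm{Vol}(B_{\xi,g}(x_k,r))$ for all $r$ up to $\tfrac12 d^T_{g(t_k)}$ — in order to make the total volume grow without bound. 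It is here that the fixed total volume and the orbifold Riemannian submersion $\pi:(M,g)\to(Z,h)$ of the quasi-regular structure are essential, since they reduce the count to a genuine volume comparison on $Z$ while keeping the whole argument inside the transverse (basic) category where the $\cW$ functional and Lemma \ref{L-5-2} live.
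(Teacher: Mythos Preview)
Your reduction from $\mathrm{diam}(M,g(t))$ to the transverse diameter is correct and matches the paper, as does the setup with the minimum point $x_t$ of the Ricci potential, the quadratic bound $R^T\le C\,d^T(x_t,\cdot)^2+C$ from Proposition~\ref{P-4-5}, and the invocation of the non-collapsing Lemma~\ref{L-5-2} on dyadic shells.

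The gap is in the mechanism of the contradiction. You aim to show that the total volume diverges by packing non-collapsed balls, but you already notice the difficulty: in the shell $\{2^j\le d^T\le 2^{j+1}\}$ the admissible radius is $r_j\sim 2^{-j}$, so a ball has volume $\sim 2^{-2nj}$, and along a single geodesic you pack $\sim 2^{2j}$ of them, yielding $\sim 2^{(2-2n)j}$. For $n\ge 2$ this tends to $0$, not $\infty$, so the ``volume diverges'' route simply does not close. Your proposed fix, a reverse-doubling inequality $\mathrm{Vol}(B_{\xi}(x_k,2r))\ge(1+\epsilon)\mathrm{Vol}(B_{\xi}(x_k,r))$ valid for all $r$, is not derivable from non-collapsing and the available curvature control; indeed there is no such uniform statement along the flow, and passing to the orbifold $Z$ does not help since the issue is purely metric.

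The paper does not try to make volume diverge at all. Instead it uses the $\cW$ functional a second time. Since the total volume is fixed while the transverse diameter is assumed to blow up, a pigeonhole on dyadic annuli $B_\xi(k_1,k_2)$ produces, for any $\epsilon>0$, an annulus with $\mathrm{Vol}(B_\xi(k_1,k_2))<\epsilon$ \emph{and} a controlled reverse-doubling $\mathrm{Vol}(B_\xi(k_1,k_2))\le 2^{10n}\mathrm{Vol}(B_\xi(k_1+2,k_2-2))$ (this is the content of Claim~\ref{C-5-4}; the non-collapsing lower bound $\mathrm{Vol}(B_\xi(k,k+1))\ge C\,2^{(2-2n)k}$ is used only to guarantee that this pigeonhole terminates). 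One then builds a basic cut-off $w=e^{C}\Phi(d^T(x_t,\cdot))$ supported on this annulus with $\int w^2=1$; the small volume forces the normalization constant $C\to\infty$, the reverse-doubling controls the gradient and $R^T$ terms, and one obtains $\cW(g(t),w,1)\le C-2C\to-\infty$, contradicting the monotonicity of $\mu(g(t),1)$. This is the missing idea: the contradiction comes from $\mu\to-\infty$ on a thin annulus, not from volume growth.
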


\begin{proof}We argue by contradiction.  Note that  the orbits of $\xi$ are closed geodesics and the length of these closed geodesics  does not change along the flow, hence it is uniformly bounded. Suppose the generic orbits have length $l=l(M, \xi, g_0)$.  Then for any $x, y\in M$,
\begin{equation}\label{E-5-6}
d_t(x, y)\leq d^T_t(x, y)+l,
\end{equation}
where $d_t$ and $d^T_t$ are the distance function and transverse distance with respect to $g(t)$. 
Hence we only need to prove that the transverse diameter $d^T_t$ is uniformly bounded along the flow $g(t)$. Let $u(x, t)$ be the transverse Ricci potential defined in \eqref{E-4-1}. Recall that $u(x, t)$ is uniformly bounded from below by Lemma \ref{L-4-3}. Choose a point   $x_t\in M$ such that
\[
u(x_t, t)=\min_{y\in M} u(y, t). 
\]
Denote 
\[
d_t(y)=d_t(x_t, y),\; d_t^T(y)=d_t^T(x_t, y). 
\]
Let $B_{\xi}(k_1, k_2)=\{y: 2^{k_1}\leq d_t^T(y)\leq 2^{k_2}\}.$ Consider the transverse annulus $B_{\xi}(k, k+1)$ $k\geq 0$. By Proposition \ref{P-4-5} and \eqref{E-5-6}, we have
$R^T \leq C2^{2k}$ on $B_{\xi}(k, k+1)$. The transverse annulus $B_{\xi}(k, k+1)$ contains at least $2^{2k-1}$ transverse ball of radius $2^{-k}$ which are not intersected with each other.  When $k$ is large enough, by Lemma \ref{L-5-2}, we have (with $r=2^{-k}$)
\[
Vol(B_{\xi}(k, k+1))\geq \sum_i Vol(B_{\xi, g(t)}(p_i, 2^{-k}))\geq 2^{2k-1}2^{-kn} C,
\]
where $\{p_i\}$ are centers of $2^{2k-1}$ transverse balls contained in $B_{\xi}(k, k+1)$. 

\begin{claim}\label{C-5-4}For every $\epsilon>0$, there exists $k_1, k_2$ such that $k_2-k_1\gg1$ such that if $d^T_t$ is large enough, then

\begin{enumerate}
\item 
$Vol(B_{\xi}(k_1, k_2))<\epsilon,$

\item $
Vol(B_{\xi}(k_1, k_2))\leq 2^{10 n} Vol (B_{\xi}(k_1+2, k_2-2)). 
$

\item There exists $r_1, r_2$ and a uniform constant $C$ such that $r_1\in [k_1 , k_1+1],  r_2\in [k_2-1, k_2]$ and
\[
\int_{B_{\xi}(r_1, r_2)} R^T dV\leq C Vol (B_{\xi}(k_1, k_2)). 
\]

\end{enumerate}

\end{claim}

Assume that the diameter of $(M, g(t))$ is not uniformly bounded in $t$. Hence there exists a sequence $t_i\rightarrow \infty$ such that $d_{t_i}^T\rightarrow \infty$. Let $\epsilon_i\rightarrow 0$ be a sequence of positive numbers. By Claim \ref{C-5-4}, there exist sequences $k_1^i, k_2^i$ such that
\begin{equation}\label{E-5-7}
\begin{split}
V_i(k_1^i, k_2^i)&:= Vol_{t_i}(B_{\xi, t_i}(k_1^i, k_2^i))<\epsilon_i,\\
V_i(k_1^i, k_2^i)&\leq  2^{10n}V_i(k_1^i+2, k_2^i-2).
\end{split}
\end{equation}
For each $i$, we can also find $r_1^i$ and $r_2^i$ as in Claim \ref{C-5-4}. Let $\Phi_i$, for each $i$,  be a cut-off function such that $\Phi_i(t)=1$ for $t\in [2^{k_1^i+2}, 2^{k_2^i-2}]$ and equals zero for
$t$ in $(-\infty, 2^{r_1^i}] \cup [2^{r_2^i}, \infty)$ with the derivative bounded by $2$.  Define $w_i(y)=e^{C_i}\Phi_i(d_{t_i}^T(x_i, y))$, where $x_i=x_{t_i}$ such that
\[
\int_M w_i^2dV_{i}=1. 
\]
This implies that
\[
1=\int_M w_i^2dV_{i}=e^{2C_i}\int_M \Phi_i^2dV_i\leq e^{2C_i} V_i(k_1^i, k_2^i)=e^{2C_i}\epsilon_i. 
\]
It follows that $C_i\rightarrow \infty$ when $i\rightarrow \infty$. 
We compute
\begin{equation}\label{E-5-8}
\begin{split}
\cW_i&=\cW(g(t_i), w_i, 1)\\
&\leq e^{2C_i}\int_M (4|\Phi_i^{'}|^2-\Phi_i^2\log \Phi_i^2)dV_i+\int_M R^T w_i^2dV_i-2C_i. 
\end{split}\end{equation}
 First of all we have, by Claim \ref{C-5-4} (see \eqref{E-5-7}), 
\[
\begin{split}
\int_M R^T w_i^2dV_i&\leq e^{2C_i}\int_{B_{\xi, t_i}(r_1^i, r_2^i)} R^T dV_i\\
&\leq e^{2C_i} C V_i(k_1^i, k_2^i)\\
&\leq e^{2C_i} C 2^{10n} V_i(k_1^i+2, k_2^i-2)\\
&\leq C2^{10n}\int_M w_i^2dV_i\\
&=C2^{10n}. 
\end{split}
\]
Similarly, we have
\[
\begin{split}
e^{2C_i}\int_M (4|\Phi_i^{'}|^2-\Phi_i^2\log \Phi_i^2)dV_i&\leq Ce^{2C_i} V_i(k_1^i, k_2^i)\\
&\leq e^{2C_i} C 2^{10n} V_i(k_1^i+2, k_2^i-2)\\
&\leq C2^{10n}\int_M w_i^2dV_i\\
&=C2^{10n}. 
\end{split}
\]
Hence we get 
\[
\cW_i\leq C-2C_i
\]
for some uniform constant $C$. By the monotonicity of $\mu(g(t), 1)$ (with $\tau_0\equiv 1$), we get that
\[
\mu(g(0), 1)\leq \mu(g(t_i), 1)\leq \cW_i \leq C-2C_i.
\]
Contradiction since $C_i\rightarrow \infty$. Therefore, there is a uniform bound on the diameter of $(M, g(t))$. 
\end{proof}

Now we prove Claim \ref{C-5-4}, hence finish the proof of Theorem \ref{T-5-3}.
\begin{proof}Note that the volume of $M$ remains a constant along the flow. If the diameter of $(M, g(t))$ is not uniformly bounded, then for any $\epsilon>0$, if the diameter of $(M, g(t))$ large enough,  there exists $K$ big enough such that for all $k_2> k_1\geq K$, $Vol(B_{\xi}(k_1, k_2))<\epsilon$. If the estimate (2) does not hold,  we have
\[
Vol(B_{\xi}(k_1, k_2))\geq 2^{10n}Vol(B_{\xi}(k_1+2, k_2-2)).
\]
We then consider $B_{\xi}(k_1+2, k_2-2)$ instead and ask whether (2) holds or not for $B_{\xi}(k_1+2, k_2-2)$. We repeat this step if we do not find two numbers such that (1) and (2) are both satisfied. Suppose for every $p$, at $p$-th step we cannot find two numbers. Then we would have
\[
Vol(B_{\xi}(k_1, k_2))\geq 2^{10np}Vol(B_{\xi}(k_1+2p, k_2-2p)).
\]
We can assume that
\[
k_1+2p\leq k, k_2-2p\geq k+1.\]
In particular, we can choose $k\gg 1$ such that
\[
k_1=\frac{k}{2}, k_2=\frac{3k}{2},\;\mbox{and}\;  p=\left[\frac{k}{4}\right]-1. 
\]
It then follows that
\[
\epsilon>Vol(B_{\xi}(k_1, k_2))\geq 2^{10np} Vol(B_{\xi}(k, k+1))\geq 2^{10np} C 2^{-2nk} 2^{2k-1}.
\]
Note that $10p>2k$ when $k$ is big. This gives a contradiction if $k$ is big enough. 

We define a transverse metric sphere $S_\xi(x, r)$ as
\[
S_{\xi}(x, r)=\{y: d^T_t(x, y)=r\}. 
\]
Hence we have
\[
\frac{d}{dr} Vol(B_{\xi, g}(x, r))=Vol(S_{\xi}(x, r)). 
\]

Given $k_1\ll k_2$ in (1) and (2), there exists $r_1\in [k_1, k_1+1]$ such that for $r=2^{r_1}$,
\[
Vol(S_{\xi}(x, r))\leq \frac{2Vol(B(k_1, k_2))}{2^{k_1}}. 
\]
Suppose not, we have
\[
Vol(B(k_1, k_1+1))=\int_{2^{k_1}}^{2^{k_1+1}} Vol(S_{\xi}(x, r))dr\geq  2Vol(B_{\xi}(k_1, k_2)). 
\]
Contradiction since $k_1\ll k_2$. Similarly there exists $r_2\in [k_2-1, k_2]$ such that
\[
Vol(S_{\xi}(x, 2^{r_2}))\leq  \frac{2Vol(B(k_1, k_2))}{2^{k_2}}.  
\]

We have the estimate, 
\begin{equation}\label{E-i-1}
\begin{split}
&\int_{B_{\xi}(r_1, r_2)}R^T-nVol({B_{\xi}(r_1, r_2)})=\int_{B_{\xi}(r_1, r_2)} (R^T-n)\\
\quad&= -\int_{B_{\xi}(r_1, r_2)} \t u\\
\quad&\leq \int_{S_{\xi}(x, 2^{r_1})} |\nabla u|+\int_{S_{\xi}(x, 2^{r_2})} |\nabla u|\\
\quad&\leq 2CVol(B_{\xi}(k_1, k_2))\left(\frac{1}{2^{k_1}} 2^{k_1+1} +\frac{1}{2^{k_2}} 2^{k_2}\right)\\
\quad&\leq C Vol(B_{\xi}(k_1, k_2)),
\end{split}
\end{equation}
where we have used the estimate of $|\nabla u|$ in Proposition \ref{P-4-5}. 
\end{proof}

\begin{rmk}Note that the transverse distance function is Lipschitz, hence the integration by parts in \eqref{E-i-1} is justified. 
\end{rmk}

\section{Irregular Sasakian Structure and Approximation}
When a Sasakian structure $(M, \xi, \eta, \Phi, g)$ is irregular,  Rukimbira \cite{Rukimbira95a} proved that one can always approximate an irregular Sasakian structure by a sequence of quasi-regular Sasakian structures $(M, \xi_i, \eta_i, \Phi_i, g_i)$ (\cite{Rukimbira95a}, see Theorem 7.1.10, \cite{BG} also). In particular, one can choose $\xi_i=\xi+\rho_i$ for $\rho_i$ in a commutative Lie algebra  spanned by the vector fields defining $\bar \cF_\xi$, the closure of $\cF_\xi$; moreover $\lim \rho_i\rightarrow 0$ and $\eta_i$, $\Phi_i$, $g_i$ can be expressed in terms of $\rho_i$; see \cite{BG} Theorem 7.1.10 for  more details. 
We can then study the behavior of the Sasaki-Ricci flow for each quasilinear Sasakian metric $(M, \xi_i, \eta_i, K_i, g_i)$. 
Note  that the transverse diameter of $g_i(t)$ is uniformly bounded along the Sasaki-Ricci flow for any $t$ and $i$ large enough,  by Theorem \ref{T-5-3}.  Since $R^T$ and $\nabla u$ are both basic, 
 the approximation above is then  enough to bound the transverse scalar curvature $R^T$ and $\nabla u$. 
Note that  the generic orbits for $(M, \xi, K, g)$ are not close,  hence the length of the generic orbits of $(M, \xi_i, K_i, g_i)$  do not have a uniform bound  when $i\rightarrow \infty$. However
 for a compact Sasakian manifold (or K-contact) manifold, there always exists some close orbits of $\xi$ (\cite{Banyaga90}). Under the Sasaki-Ricci flow,  the length of these closed orbits does not change. Hence we can get some  close orbit of $\xi$ on $(M, \xi_i, K_i, g_i)$ under approximation which has a uniformly bounded length. This suffices to prove that the diameter is uniformly bounded  along the Sasaki-Ricci flow  for irregular Sasaki structure.

The main result in this section is as follows,
\begin{theo}\label{T-6-1}Let $(M, \xi, g_0)$ be a  Sasakian structure and let $g(t)$ be the solution of the Sasaki-Ricci flow with the initial metric $g_0$. There is a uniform constant $C$ such that $R^T_0(t)\leq C, |\nabla u_0(t)|\leq C$ and the diameter is also uniformly bounded.  \end{theo}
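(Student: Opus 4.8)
The quasi-regular case is already Theorem \ref{T-5-3}, so the plan is to handle an irregular $\xi$ by approximating it with quasi-regular structures, transferring the transverse estimates of Section 5 uniformly across the approximation, and treating the ordinary diameter by hand using a closed Reeb orbit.

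First I would apply Rukimbira's theorem (\cite{Rukimbira95a}; see also Theorem 7.1.10 in \cite{BG}) to produce quasi-regular Sasakian structures $(M,\xi_i,\eta_i,\Phi_i,g_i)$ with $\xi_i=\xi+\rho_i$, where $\rho_i$ lies in the abelian Lie algebra of the closure $\bar\cF_\xi$ and $\rho_i\to 0$, so that $(M,\xi_i,\eta_i,\Phi_i,g_i)\to(M,\xi,\eta,\Phi,g_0)$ in $C^\infty$. Let $g_i(t)$ denote the Sasaki-Ricci flow with initial datum $g_i$. The essential point is that every estimate in Section 5 is genuinely transverse: since $u$, $R^T$ and $|\nabla u|=|\nabla^T u|$ are basic, Lemma \ref{L-4-4} and Proposition \ref{P-4-5} upgrade to the transverse-distance bounds $R^T(y,t)\le C\,(d^T_t(x_t,y))^2+C$ and $|\nabla u|\le C\,d^T_t(x_t,y)+C$, where $x_t$ is the minimum point of $u$ and no orbit length enters. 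Consequently the proof of Theorem \ref{T-5-3} bounds the transverse diameter $d^T_{g_i(t)}$ by a constant depending only on $\mu(g_i(0),1)$, the fixed total volume, and the lower bound of $R^T_i$; all of these converge as $i\to\infty$, so the transverse-diameter bound is uniform in both $i$ and $t$. Feeding it back into the transverse estimates gives $R^T_i(t)\le C$ and $|\nabla u_i(t)|\le C$, uniformly in $i$ and $t$.

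Next I would pass to the limit. By the estimates of \cite{SWZ} the flows $g_i(t)$ converge in $C^\infty$ on compact time intervals to the flow $g_0(t)$ of the irregular structure, and since $R^T$ and $\nabla u$ are basic and built from the transverse K\"ahler data, the uniform bounds survive the limit, giving $R^T_0(t)\le C$ and $|\nabla u_0(t)|\le C$.

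The main obstacle is the ordinary diameter: in $d_{g_i(t)}\le d^T_{g_i(t)}+l_i$ the generic orbit length $l_i$ of $\xi_i$ diverges as $i\to\infty$, so Theorem \ref{T-5-3}'s diameter bound degenerates under the approximation. To get around this I would invoke Banyaga's theorem \cite{Banyaga90} to obtain a closed orbit $\gamma_0$ of $\xi$ on $(M,g_0)$. Since $\gamma_0$ is a one-dimensional orbit of the torus $\bar\cF_\xi$, each element of the corresponding Lie algebra---in particular $\rho_i$---restricts to a constant multiple of $\xi$ along $\gamma_0$; hence $\xi_i=(1+c_i)\xi$ on $\gamma_0$ with $c_i\to 0$, so $\gamma_0$ is simultaneously a closed orbit of $\xi_i$ whose $g_i$-length is close to its $g_0$-length. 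Because $|\xi_i|_{g_i(t)}\equiv 1$, this length is unchanged along the flow, so $\gamma_0$ has $g_i(t)$-length $l_0$ uniformly bounded in $i$ and $t$. Routing through $\gamma_0$---go transversally from $x$ to its nearest point on $\gamma_0$, along $\gamma_0$, then transversally to $y$---yields $d_{g_i(t)}(x,y)\le 2\,d^T_{g_i(t)}+l_0/2$, uniformly in $i$ and $t$, and passing to the limit as above gives the uniform diameter bound for $(M,g_0(t))$. The two points that require care are the $l_i$-independence of the transverse-diameter constant (which is exactly why the transverse formulation of Proposition \ref{P-4-5} is needed) and the fact that $\rho_i$ is tangent to $\xi$ along $\gamma_0$.
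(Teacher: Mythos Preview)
Your approach is essentially the paper's: approximate by quasi-regular structures via Rukimbira, use the transverse-distance version of the Section~5 estimates to get uniform (in $i$ and $t$) bounds on $R^T_i$ and $|\nabla u_i|$, pass to the limit, and handle the ordinary diameter through a closed Reeb orbit. One remark: the convergence $g_i(t)\to g_0(t)$ on compact time intervals is not literally an estimate in \cite{SWZ}; the paper isolates this as a separate stability statement (Proposition~\ref{P-6-3}), proved by combining the uniform $C^k$ bounds of \cite{SWZ} with a maximum-principle comparison of the potentials---you should make that step explicit. On the other hand, your treatment of the closed orbit is sharper than the paper's: you observe that a closed $\xi$-orbit $\gamma_0$ is necessarily a one-dimensional orbit of the torus $\overline{\cF_\xi}$, hence $\rho_i|_{\gamma_0}=c_i\xi$ and $\gamma_0$ is itself a closed $\xi_i$-orbit of controlled length, whereas the paper merely asserts the existence of nearby $\xi_i$-orbits $\cO_i$ with convergent lengths.
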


To prove this result, first let us recall the following proposition proved in \cite{SWZ}.
\begin{prop}[Smoczyk-Wang-Zhang, \cite{SWZ}] \label{P-6-2}Let $g(t)$ be the solution of the Sasaki-Ricci flow with initial metric $g_0$. Then  for any $T\in (0, \infty)$, there is a constant $C$ such that
\[
\|\phi(t)\|_{C^k(M, g_0)}\leq C, \|g(t)\|_{C^k}\leq C
\]
where $C$ depends on $g_0, T, k$. 
\end{prop}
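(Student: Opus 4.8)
The plan is to treat the Sasaki-Ricci flow in its potential form \eqref{E-p-1} as a parabolic complex Monge-Amp\`ere equation for the basic function $\phi$, and to run the a priori estimates of Cao \cite{Cao} and Yau \cite{Yau} for the K\"ahler-Ricci flow, transported to the transverse setting via the Haefliger cocycle description of Section 3. The key structural observation is that on each foliation chart $V_\alpha$ the flow \eqref{E-p-1} is literally the K\"ahler-Ricci flow for the potential, so every local computation is a genuine K\"ahler one; and since $\phi$, $\dot\phi=\p_t\phi$, $\t^T\phi$ and the higher transverse covariant derivatives of $\phi$ are all basic, the scalar maximum principle applies on $M$ (the transverse Laplacian $\t^T$ agreeing with $\t$ on basic functions), so the resulting local estimates patch to global bounds. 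All estimates are derived on a fixed finite interval $[0,T]$, so the constants are allowed to depend on $T$, $g_0$ and $k$.

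First I would obtain the low-order bounds by the maximum principle. Differentiating \eqref{E-p-1} in $t$ gives $\p_t\dot\phi=\t^T_\phi\dot\phi+\dot\phi$, where $\t^T_\phi$ is the transverse Laplacian of the evolving metric $g^T_\phi=g^T_{i\bar j}+\phi_{i\bar j}$; since $\dot\phi$ is basic and $F$ is time-independent, the maximum principle yields $|\dot\phi|\le Ce^t$, hence a uniform bound on $[0,T]$. Estimating $\max_M\phi$ and $\min_M\phi$ by comparison with the ordinary differential equation coming from \eqref{E-p-1} (using the normalization of \eqref{E-4-2}) then gives $\|\phi\|_{C^0(M,g_0)}\le C(T)$.

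The main work, and the principal obstacle, is the second-order estimate. Following the parabolic Aubin-Yau computation I would apply the maximum principle to a quantity of the form $\log\operatorname{tr}_{g^T}g^T_\phi-A\phi$, with $A$ large depending on a lower bound for the transverse bisectional curvature of the fixed reference metric $g^T$ (bounded since $M$ is compact). Together with the $C^0$ bound and \eqref{E-p-1}, this controls $\t^T\phi$ from above; combined with the arithmetic-geometric mean inequality and the bound on $\dot\phi$ it produces two-sided bounds making $g^T_\phi$ uniformly equivalent to $g^T$, so the flow is uniformly parabolic on $[0,T]$. The delicate point is that the transverse curvature of the background enters here and one must be careful that the computation is carried out on the K\"ahler quotients $V_\alpha$ rather than on $M$ itself. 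With uniform parabolicity in hand I would pass to a third-order bound, either by the parabolic Calabi estimate applied to $|\nabla^T\p\bar\p\phi|^2$ or, more cleanly, by invoking the concavity of the complex Monge-Amp\`ere operator and the Evans-Krylov theorem, to obtain a uniform $C^{2,\alpha}$ bound for $\phi$.

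Once a $C^{2,\alpha}$ bound is available, the linearization of \eqref{E-p-1} is a uniformly parabolic operator with H\"older-continuous coefficients, and the standard parabolic Schauder theory bootstraps the estimate to $\|\phi\|_{C^{k,\alpha}(M,g_0)}\le C(g_0,T,k)$ for every $k$; as before this is done on the $V_\alpha$ and patched using that the derivatives involved are basic. Finally, the bound on $\|g(t)\|_{C^k}$ follows from that on $\phi$ through the relation $g=g^T+\eta\otimes\eta$ of \eqref{E-2} and the explicit dependence of $\eta_\phi$ and $K_\phi$ on $\phi$ recorded in Section 3, the Reeb field $\xi$ and the transverse holomorphic structure being fixed along the flow, since differentiating these expressions only involves transverse derivatives of $\phi$ that are already controlled.
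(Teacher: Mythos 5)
The paper does not prove this proposition at all --- it is quoted verbatim from Smoczyk--Wang--Zhang \cite{SWZ}, whose proof is exactly the Cao-style scheme you outline: the bound on $\dot\phi$ via $\p_t\dot\phi=\t^T_\phi\dot\phi+\dot\phi$, the transverse parabolic Aubin--Yau second-order estimate applied to a basic quantity of the form $\log\operatorname{tr}_{g^T}g^T_\phi-A\phi$, a third-order (Calabi-type) estimate, and parabolic Schauder bootstrapping, all carried out on the foliation charts $V_\alpha$ and globalized by the maximum principle for basic functions. Your reconstruction is correct and takes essentially the same approach as the cited source, including the passage from bounds on $\phi$ to bounds on $g(t)$ via $\eta_\phi=\eta+d^c_B\phi$ with $\xi$ and the transverse holomorphic structure fixed.
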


We can prove a  stability result of the Sasaki-Ricci flow under approximation using the above estimate.

\begin{prop}\label{P-6-3}Let $(M, \xi_0, \eta_0, \Phi_0, g_0)$ be a Sasakian structure and $g_0(t)$ be the solution of the Sasaki-Ricci flow with initial metric $g_0$. Suppose $(M, \xi_j, \eta_j, \Phi_j, g_j)$ be a sequence of Sasakian structures such that $(M, \xi_j, \eta_j, \Phi_j, g_j)\rightarrow (M, \xi_0, \eta_0, \Phi_0, g_0)$ in $C^\infty$ topology when $j\rightarrow \infty$ (we  choose a sequence as in the proof of Theorem 7.1.10, \cite{BG}). Then for any $\epsilon$ and $T\in (0, \infty)$, there is a constant $N=N(\epsilon, k, T)$ such that when $j\geq N$, $t\in [0, T]$, 
\[
\|g_j(t)-g_0(t)\|_{C^k(M, g_0)}\leq \epsilon. 
\] 
\end{prop}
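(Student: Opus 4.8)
The plan is to exploit the uniform higher-order estimates of Proposition \ref{P-6-2} together with continuous dependence of solutions of a parabolic equation on its coefficients and initial data. Since the Sasaki-Ricci flow reduces, in terms of the basic potential, to the scalar parabolic Monge-Amp\`ere type equation \eqref{E-p-1}, I would set up the problem as a comparison between two such scalar flows: the one driven by the data $(\xi_0,\eta_0,\Phi_0,g_0)$ and the one driven by $(\xi_j,\eta_j,\Phi_j,g_j)$. Write $\phi_0(t)$ and $\phi_j(t)$ for the corresponding potentials, each solving \eqref{E-p-1} with its own reference metric $g^T$ and its own function $F$. The key structural input is that, under the chosen approximating sequence (Theorem 7.1.10 of \cite{BG}), the transverse K\"ahler data $g^T_j$, the Reeb field $\xi_j$, the forms $\eta_j,\Phi_j$, and hence $F_j$, all converge to their limits in $C^\infty(M,g_0)$; in particular $\|g^T_j-g^T_0\|_{C^k(M,g_0)}\to 0$ and $\|F_j-F_0\|_{C^k(M,g_0)}\to 0$ for every $k$.

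The main steps I would carry out are as follows. First, by Proposition \ref{P-6-2} applied uniformly in $j$ (the constant there depends only on the initial data and $T$, and the approximating data are $C^\infty$-bounded since they converge), obtain \emph{$j$-independent} bounds $\|\phi_j(t)\|_{C^k(M,g_0)}\le C(k,T)$ on $[0,T]$. This uniform regularity is what makes the difference equation tractable. Second, set $w_j=\phi_j-\phi_0$ and subtract the two copies of \eqref{E-p-1}. The logarithmic terms combine, via the mean-value form $\log\det A-\log\det B=\int_0^1 \operatorname{tr}\big((B+s(A-B))^{-1}(A-B)\big)\,ds$, into a linear-elliptic-in-$w_j$ operator with coefficients controlled by the uniform estimates of Step 1, plus an inhomogeneous term that is a $C^k$-small perturbation measuring $\|g^T_j-g^T_0\|$ and $\|F_j-F_0\|$. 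Concretely, $w_j$ satisfies a parabolic equation of the schematic form $\partial_t w_j=a^{i\bar j}(x,t)\,\partial_i\partial_{\bar j}w_j+w_j+E_j(x,t)$, where $a^{i\bar j}$ is uniformly elliptic with uniformly bounded coefficients and $\|E_j\|_{C^0([0,T]\times M)}\to 0$ as $j\to\infty$. Third, since $w_j(0)=\phi_j(0)-\phi_0(0)\to 0$ as well, a Gr\"onwall / maximum-principle estimate on $[0,T]$ gives $\|w_j(t)\|_{C^0}\le e^{T}\big(\|w_j(0)\|_{C^0}+T\sup_{[0,T]}\|E_j\|_{C^0}\big)\to 0$. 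Finally, interpolating this $C^0$ smallness against the uniform $C^{k+1}$ bounds from Step 1 upgrades the convergence to $C^k$, i.e. $\|w_j(t)\|_{C^k(M,g_0)}\to 0$; translating back from potentials to the metrics $g_j(t)$ (which depend on $\phi_j$ and on the converging background data $\xi_j,\eta_j,\Phi_j$ through smooth algebraic expressions) yields $\|g_j(t)-g_0(t)\|_{C^k(M,g_0)}\le\epsilon$ for $j\ge N(\epsilon,k,T)$.

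The step I expect to be the main obstacle is \emph{not} the scalar estimate itself but the bookkeeping that makes Step 2 legitimate: the two flows live with respect to \emph{different} transverse holomorphic structures and different Reeb foliations, so the potentials $\phi_j$ and $\phi_0$ are a priori functions adapted to different splittings $TM=\cD_j\oplus L\xi_j$. One must check that, under the Rukimbira approximation, these structures can be compared within a single fixed background (that of $g_0$) — this is exactly where the explicit form $\xi_j=\xi+\rho_i$ with $\rho_i\to 0$ in a commutative algebra preserving the bundle-like condition is used, guaranteeing that the comparison operator $a^{i\bar j}$ and the error $E_j$ are genuinely well-defined basic quantities on $(M,g_0)$. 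Once this identification is in place, everything reduces to standard linear parabolic theory with a vanishing source, so the remaining estimates are routine.
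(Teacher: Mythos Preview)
Your proposal is correct and follows essentially the same route as the paper: uniform $C^k$ bounds from Proposition~\ref{P-6-2}, a $C^0$ comparison of $\phi_j-\phi_0$ via the maximum principle on the difference equation, and then interpolation to upgrade to $C^k$; the paper carries out exactly these three steps.

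One point of imprecision worth flagging: in your final paragraph you say the Rukimbira approximation makes the comparison operator $a^{i\bar j}$ and the error $E_j$ ``genuinely well-defined \emph{basic} quantities on $(M,g_0)$''. This is not quite how the paper resolves it, and in fact $\phi_j$ is \emph{not} basic for $\xi_0$, so the difference $w_j$ and the linearized coefficients are not basic for either foliation. The paper's fix is more concrete: it writes the nonlinear term as $\eta_0\wedge(d\eta_0-d\circ\Phi_0\circ d\phi_j)^n$, computes in local coordinates $(x,z_1,\dots,z_n)$ adapted to $\xi_0$, and observes that although $-d\circ\Phi_0\circ d\phi_j$ picks up an extra piece $\theta\wedge dx$ (because $\phi_j$ is not $\xi_0$-basic), this piece is killed upon wedging with $\eta_0$. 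Hence the log term is still a genuine $\log\det$ of $g^T_{0,i\bar j}+\partial_i\partial_{\bar j}\phi_j$, and at a spatial maximum of $w_j$ it is $\le 0$. This is precisely the local computation that legitimizes your Step~2; your mean-value linearization and the paper's direct pointwise argument are then equivalent ways to run the maximum principle. (Also, note $\phi_j(0)=\phi_0(0)=0$, so $w_j(0)=0$ exactly, not merely $\to 0$.)
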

\begin{proof}The statement is a standard stability result for parabolic equations. Suppose $g_k(t)$ is the solution of the Sasaki-Ricci flow for the initial metric $g_k$, and  the corresponding potential $\phi_k(t)$ solves \eqref{E-p-1} with $\phi_k(0)=0$. We should rewrite \eqref{E-p-1}, for each $k$, as 
\begin{equation}\label{E-p-2}
\frac{\p \phi_k}{\p t}=\log \frac{\eta_k\wedge (d\eta_k-d\circ \Phi_k \circ d\phi_k)^n}{\eta_k\wedge (d\eta_k)^n}+\phi_k-F_k. 
\end{equation}
We can assume that $F_k\rightarrow F_0$ in $C^\infty$ topology. Note that, for each $k$, $\xi_k$ and the transverse complex structure are fixed; hence $d\circ \Phi_k(t) \circ d\phi_k=d\circ \Phi_k \circ d\phi_k=-2\sqrt{-1}\p_B\bar \p_B\phi_k$. Moreover, by Proposition \ref{P-6-2}, we can assume that $\|\phi_k(t)\|_{C^l}\leq C(l, T, g_0)$ for $t\in [0, T]$. Hence we only need to show that $|\phi_k(t)-\phi_0(t)|\rightarrow 0$ when $k\rightarrow \infty$ for any $t$; then by the standard interpolation inequalities we can then get $\|\phi_k(t)-\phi_0(t)\|_{C^l}\rightarrow 0$ when $k\rightarrow \infty$. To prove $|\phi_k(t)-\phi_0(t)|\rightarrow 0$, we consider
\begin{equation}\label{E-p-3}
\frac{\p}{\p t}  \left(\phi_k(t)-\phi_0(t)\right)=\log \frac{\eta_0 \wedge (d\eta_0-d\circ \Phi_0 \circ d\phi_k(t))^n}{\eta_0\wedge (d\eta_0-d\circ \Phi_0 \circ d\phi_0(t))^n}+\phi_k(t)-\phi_0(t)+f_k,
\end{equation}
where \[f_k=\log \frac{\eta_k \wedge (d\eta_k-d\circ \Phi_k \circ d\phi_k(t))^n}{\eta_0\wedge (d\eta_0-d\circ \Phi_0 \circ d\phi_k(t))^n}-\log \frac{\eta_k\wedge (d\eta_k)^n}{\eta_0\wedge (d\eta_0)^n}+F_0-F_k.\]
Note that $\left\|\phi_k(t)\right\|_{C^l}$ is uniformly bounded (depending only on $T, g_0, l$), $f_k\rightarrow 0$ when $k\rightarrow \infty$. Let $\max |f_k|=c_k$. Let $w(t)=\max_M (\phi_k(t)-\phi_0(t))$. For any $p\in M$ such that $\phi_k(t)-\phi_0(t)$ obtains its maximum at $p$, then at $p$, 
\begin{equation}\label{E-p-4}
\log \frac{\eta_0 \wedge (d\eta_0-d\circ \Phi_0 \circ d\phi_k(t))^n}{\eta_0\wedge (d\eta_0-d\circ \Phi_0 \circ d\phi_0(t))^n}\leq 0. 
\end{equation}
We can choose a local coordinate $(x, z_1, \cdots z_n)$ such that at $p$, $\xi_0=\p_x, \eta_0=dx, d\eta_0=2\sqrt{-1}g^T_{i\bar j}dz_i\wedge dz_{\bar j}$. We 
compute 
\[
\eta_0\wedge \left(d\eta_0-d\circ \Phi_0 \circ d\phi_0(t)\right)^n=2^nn!(\sqrt{-1})^n\det\left(g^T_{i\bar j}+\frac{\p^2\phi_0(t)}{\p z_i \p z_{\bar j}}\right) dx\wedge dZ\wedge d\bar Z.
\]
Note that $\phi_k(t)$ is not a basic function for $\xi_0$, but we can compute 
\[
-d\circ \Phi_0 \circ d\phi_k(t)=2\sqrt{-1}\frac{\p^2 \phi_k(t)}{\p z_i\p z_{\bar j}}dz_i\wedge dz_{\bar j}+\theta \wedge dx,
\]
where $\theta$ is a 1-form. It then follows that
\[
\eta_0 \wedge (d\eta_0-d\circ \Phi_0 \circ d\phi_k(t))^n=2^nn!(\sqrt{-1})^n\det\left(g^T_{i\bar j}+\frac{\p^2\phi_k(t)}{\p z_i \p z_{\bar j}}\right) dx\wedge dZ\wedge d\bar Z.
\]
Note that the hessian of $\phi_k(t)-\phi_0(t)$ is nonpositive at $p$, in particular, we have
\[
\det\left(g^T_{i\bar j}+\frac{\p^2\phi_k(t)}{\p z_i \p z_{\bar j}}\right)\leq \det\left(g^T_{i\bar j}+\frac{\p^2\phi_0(t)}{\p z_i \p z_{\bar j}}\right).\]Hence \eqref{E-p-4} is confirmed. By the  standard maximum principle argument and \eqref{E-p-3}, we can then get 
\[
\frac{\p w(t)}{\p t}\leq w(t)+c_k.
\]
Hence $w(t)\leq c_k (e^T-1)$. Similarly, we can prove that $-c_k(e^T-1)\leq \phi_k(t)-\phi_0(t)\leq c_k(e^T-1)$. 
\end{proof}

Now we are in the position to prove Theorem \ref{T-6-1}. 

\begin{proof}Pick up a sequence of quasi-regular Sasakian structure $(M, \xi_j, g_j)$ such that $(M, \xi_j, g_j)\rightarrow (M, \xi, g_0)$. Then by Theorem \ref{T-5-3}, the transverse diameter $d^T(g_j(t))$ is uniformly bounded for any $j$ and $t$. 
By Lemma \ref{L-4-4}, we just need to bound $u_j(t)$, the normalized Ricci potential, from above as in Proposition \ref{P-4-5}.  The only difference is that we need to use the fact that $u_j$ is basic and use the transverse diameter to replace the diameter in Proposition \ref{P-4-5}. We can sketch the proof as follows. 
By Lemma \ref{L-4-4}, we know that
\[
|\nabla \sqrt{u_j+2B}|\leq C.
\]
Since $u_j$ is basic with respect to $(M, \xi_j, g_j)$, then for any $y, z$
\[
\sqrt{u_j(y, t)+2B}-\sqrt{u_j(z, t)+2B}\leq C d^T_{g_j(t)}(y, z). 
\]
Let $u_j(x, t)=\min_M u(y, t)$. Note that $u_j(x, t)\leq C$ for some uniformly bounded constant $C$ by the normalized condition  \eqref{E-4-2}. It implies that
\[
\sqrt{u_j(y, t)+2B}\leq Cd^T_{g_j(t)}(x, y)+C.
\]
This gives a uniform upper bound for $u_j(t)$,  hence a uniform upper bound for $R^T_j$ and $|\nabla u_j|$. By Proposition \ref{P-6-3},  for any $t\in (0, \infty)$, $g_j(t)\rightarrow g_0(t)$ for $j$ large enough, hence $R^T_j\rightarrow R^T_0, |\nabla u_j|\rightarrow |\nabla u_0|$. 

Now we prove that the diameter is uniformly bounded along the Sasaki-Ricci flow. 
Recall that  there are  at least $n+1$ close orbits on a compact K-contact manifold (\cite{Rukimbira95, Rukimbira99} ). Hence we can suppose that $(M, \xi, g)$ has a close orbit $\cO$ of $\xi$. Since $(M, \xi_j, g_j)\rightarrow (M, \xi, g_0)$, we can find an orbit $\cO_i$ of $\xi_i$ on $(M, \xi_i, g_i)$ whose length  converges to the length of $\cO$, hence the length $\cO_i$ is uniformly bounded.
Note that the transverse diameter is also uniformly bounded along the Sasaki-Ricci flow for $(M, \xi_i, g_i)$. This proves that the diameter is also uniformly bounded along the Sasaki flow for $(M, \xi_i, g_i)$. By Proposition \ref{P-6-3} again, this implies that the diameter is uniformly bounded along the Sasaki-Ricci flow for $(M, \xi, g)$. 

 \end{proof}

\section{Compact Sasakian Manifolds of Positive Transverse Holomorphic Bisectional Curvature}

In this section we consider compact Sasakian manifolds with positive transverse holomorphic bisectional curvature. Transverse holomorphic bisectional curvature  can be defined as the holomorphic bisectional curvature for transverse K\"ahler metric of a Sasakian metric and it was studied in a recent interesting paper \cite{Zhang}. We recall some  definitions.

\begin{defi}Given two $J$invariant planes $\sigma_1, \sigma_2$ in $\cD_x\subset T_xM$, the holomorphic bisectional curvature $H^T(\sigma_1, \sigma_2)$ is defined as
\[
H^T(\sigma_1, \sigma_2)=\langle R^T(X, JX)JY, Y\rangle,
\]
where $X\in \sigma_1, Y\in \sigma_2$ are both unit vectors. 
\end{defi}

It is easy to check that $\langle R^T(X, JX)JY, Y\rangle$ depends only on $\sigma_1, \sigma_2$, hence $H^T(\sigma_1, \sigma_2)$ is well defined. 
By the first Bianchi identity, one can check that
\[
\langle R^T(X, JX)JY, Y\rangle=\langle R^T(Y, X)X, Y\rangle+\langle R^T(Y, JX)JX, Y\rangle. 
\]
It is often convenient to treat (transverse) holomorphic bisectional curvature in (transverse) holomorphic coordinates. 
Suppose $u, v\in \cD$ are two unit vectors and let $\sigma_u, \sigma_v$ be two $J$-invariant planes spanned by $\{u, Ju\}$ and $\{v, Jv\}$ respectively. Set
\[
U=\frac{1}{2}\left(u-\i Ju\right), V=\frac{1}{2}\left(v-\i Jv\right). 
\] 
Denote $R^T(V, \bar V; U, \bar U)=\langle R^T(V, \bar U)U, \bar V\rangle$, then we have
\begin{equation}\label{E-8-1}
R^T(V, \bar V; U, \bar U)=\frac{1}{4}H^T(\sigma_u, \sigma_v). 
\end{equation}

\begin{defi}\label{D-8-2} For $x\in M$,  the transverse holomorphic bisectional curvature $H^T$ is positive (or nonnegative) at $x$ if $H^T(\sigma_1, \sigma_2)$ is positive for any two $J$ invariant planes $\sigma_1, \sigma_2$ in $\cD_x$. $M$ has positive (nonnegative) transverse holomorphic bisectional curvature if $H^T$ is positive (nonnegative) at any point $x\in M$; equivalently, positivity of transverse holomorphic bisectional curvature  means that $R^T(V, \bar V; U, \bar U)> 0$ for any $V, U\in\cD_x\subset T_xM$ and all $x\in M$. For simplicity, we shall also use the notations $R^T\geq 0$ and $R^T>0$. If a tensor field $S$ has the exact same type as $R^T$, we can also define its positivity in the same way. If $S$ and $T$ have the same type as $R^T$, then $S\geq T$ if and only if $S-T\geq0$. 
\end{defi}

It is also useful to consider the transverse holomorphic bisectional curvature locally. 
Recall that there is an open cover $\{U_\alpha\}$ of $M$,  $V_\alpha\subset \C^n$, submersions $\pi_\alpha: U_\alpha\rightarrow V_\alpha$ such that
\[
\pi_\alpha\circ \pi^{-1}_\beta: \pi_\beta(U_\alpha\cap U_\beta)\rightarrow \pi_\alpha (U_\alpha\cap U_\beta)
\]
is biholomporphic on $U_\alpha\cap U_\beta$. And the transverse metric $g^T$ is well defined on each $U_\alpha$. Note that $g^T_\alpha$ is a genuine  K\"ahler metric on $V_\alpha$ and we can identify the transverse holomorphic bisectional curvature of the Sasakian metric $g$ on $U_\alpha$ with the holomorphic bisectional curvature of $g^T_\alpha$ on each $V_\alpha$.  Hence that $R^T$ is positive is equivalent to that $R^T_\alpha$ is positive for all $\alpha$. Moreover, a Sasaki-Ricci flow solution on $M$ induces a K\"ahler-Ricci flow solution on $V_\alpha$ for each $\alpha$. With this relation we can see that the positivity of (transverse) holomorphic bisectional curvature is preserved under the Sasaki-Ricci flow, by extending the results for K\"ahler-Ricci flow.

For $3$-dimensional  compact  K\"ahler manifolds, S. Bando \cite{Bando} proved that the positivity of holomorphic bisectional curvature is preserved along the K\"ahler-Ricci flow. This was later proved by N. Mok \cite{Mok} for all dimensions. As a direct consequence of their results, we can get the same property of transverse holomorphic bisectional curvature along the Sasaki-Ricci flow. 

We consider the evolution equation of $R^T$ along the Sasaki-Ricci flow, and we denote
\begin{equation}\label{E-8-2}
\frac{\p R^T}{\p t}= \t^T R^T+F(R^T)+R^T,
\end{equation}
where in the local transverse holomorphic coordinates, 
\begin{equation}\label{E-8-3}
\begin{split}
F(R^T)_{\alpha\bar \alpha\beta\bar \beta}=\sum_{\mu, \nu}R^T_{\alpha\bar \alpha \mu\bar \nu}R^T_{\nu \bar \mu \beta \bar \beta}&-\sum_{\mu, \nu}\left|R^T_{\alpha\bar \mu\beta\bar \nu}\right|^2+\sum_{\mu, \nu}\left|R^T_{\alpha\bar \beta \mu\bar \nu}\right|^2\\
&-\sum_{\mu, \nu}Re\left(R^T_{\alpha\bar \mu}R^T_{\mu\bar\alpha\beta\bar\beta}+R^T_{\beta\bar\mu}R^T_{\alpha\bar \alpha\mu\bar\beta}\right).
\end{split}
\end{equation}

Since $g^T$ is a genuine K\"ahler metric on $V_\alpha$ and it is evolved by the K\"ahler-Ricci flow, then \eqref{E-8-2} follows from the standard computations in K\"ahler-Ricci flow. 
For any tensor $S$ which has the same type as $R^T$, we can also define $F(S)$ as in \eqref{E-8-3}. As in K\"ahler case,  $F(S)$ satisfies the following property, called the null vector condition. 
\begin{prop}[Bando; Mok]\label{P-8-2}
If $S \geq 0$ and there exist two nonzero vectors $X, Y\in \cD^{1, 0}$ such that $S_p(X, \bar X; Y, \bar Y)=0$, then $F_p(S)(X, \bar X; Y, \bar Y)\geq 0$.
\end{prop}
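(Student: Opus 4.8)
The plan is to reduce the statement to a pointwise algebraic inequality for a Kähler curvature tensor on $\cD^{1,0}_p$ and then run the Bando--Mok null-vector computation \cite{Bando, Mok}. Because $S$ has exactly the symmetries of the transverse Kähler curvature, I would fix a $g^T$-unitary frame $\{e_\mu\}$ of $\cD^{1,0}_p$ and treat everything as the genuine Kähler computation for $R^T_\alpha$ on a local leaf space $V_\alpha$; in particular the Kähler (first Bianchi) symmetries $S_{a\bar b c\bar d}=S_{c\bar b a\bar d}=S_{a\bar d c\bar b}$ and the reality relation $\overline{S_{a\bar b c\bar d}}=S_{b\bar a d\bar c}$ are available. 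Write $X,Y$ for the two given null directions.

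First I would record the algebraic consequences of $S\ge 0$ and $S(X,\bar X;Y,\bar Y)=0$. The assignments $P(V,W):=S(X,\bar X;V,\bar W)$ and $R(V,W):=S(V,\bar W;Y,\bar Y)$ are nonnegative Hermitian forms on $\cD^{1,0}_p$, and the null condition reads $P(Y,Y)=0=R(X,X)$. Hence $Y\in\ker P$ and $X\in\ker R$, which yields the first-order identities
\[
S(X,\bar X;Y,\bar V)=0, \qquad S(X,\bar V;Y,\bar Y)=0 \qquad \text{for all } V\in\cD^{1,0}_p.
\]
These identities immediately annihilate the normalization group of terms in \eqref{E-8-3}: the contractions $R^T_{\mu\bar\alpha\beta\bar\beta}=S(e_\mu,\bar X;Y,\bar Y)$ and $R^T_{\alpha\bar\alpha\mu\bar\beta}=S(X,\bar X;e_\mu,\bar Y)$ both vanish, so the entire Ricci term $-\sum\mathrm{Re}(\cdots)$ is zero at $p$.

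Next I would separate the manifestly good terms from the single dangerous one. The first term of \eqref{E-8-3} is $\sum_{\mu,\nu}S_{X\bar X\mu\bar\nu}S_{\nu\bar\mu Y\bar Y}=\mathrm{tr}(PR)\ge 0$, being the trace of a product of two nonnegative Hermitian matrices, while the third term $\sum_{\mu,\nu}\bigl|S_{X\bar Y\mu\bar\nu}\bigr|^2$ is a sum of squares. Thus $F_p(S)(X,\bar X;Y,\bar Y)\ge 0$ is equivalent to
\[
\sum_{\mu,\nu}\bigl|S(X,\bar e_\mu;Y,\bar e_\nu)\bigr|^2 \le \mathrm{tr}(PR)+\sum_{\mu,\nu}\bigl|S(X,\bar Y;e_\mu,\bar e_\nu)\bigr|^2 ,
\]
i.e. the negative second term must be dominated by the first and third. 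For this I would use the second variation of the nonnegative function $Q(Z,W):=S(Z,\bar Z;W,\bar W)$, for which $(X,Y)$ is a global minimum. Writing $Z=X+tA$, $W=Y+tB$ with $t\in\R$, the linear term in $t$ vanishes by the identities above, and nonnegativity of the Hessian gives the real quadratic form
\[
H(A,B)=S(A,\bar A;Y,\bar Y)+S(X,\bar X;B,\bar B)+2\,\mathrm{Re}\,S(A,\bar X;B,\bar Y)+2\,\mathrm{Re}\,S(A,\bar X;Y,\bar B)\ge 0 .
\]
Using reality and the Kähler symmetries I would identify the bilinear cross term $S(e_\mu,\bar X;e_\nu,\bar Y)=\overline{S(X,\bar e_\mu;Y,\bar e_\nu)}$ with the entries of the second term, and the sesquilinear cross term $S(e_\mu,\bar X;Y,\bar e_\nu)=\overline{S(X,\bar Y;e_\nu,\bar e_\mu)}$ with the entries of the third; feeding a frame-indexed family of test vectors $(A,B)$ into $H\ge 0$ and summing should assemble exactly the combination $\mathrm{tr}(PR)+\sum|S_{X\bar Y\mu\bar\nu}|^2-\sum|S_{X\bar\mu Y\bar\nu}|^2$.

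I expect the main obstacle to be precisely this last assembly: choosing the test-vector family and completing the squares so that the positive contributions of $S(A,\bar A;Y,\bar Y)$ and $S(X,\bar X;B,\bar B)$ reproduce $\mathrm{tr}(PR)$ and $\sum|S_{X\bar Y\mu\bar\nu}|^2$ with the correct sign on the dominated term, and without introducing an inverse of $P$ or stray cross terms. This bookkeeping, together with the repeated use of the first Bianchi identity to pass between the three types of contractions, is the heart of the Bando--Mok computation; by contrast the transverse reduction, the kernel identities, and the vanishing of the Ricci term are routine.
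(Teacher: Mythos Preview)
Your approach is exactly the paper's: observe that the statement is purely pointwise, pass to the local leaf space $V_\alpha$ where $g^T_\alpha$ is a genuine K\"ahler metric, and then invoke the Bando--Mok null-vector computation \cite{Bando, Mok}; the paper in fact stops at the citation and does not reproduce the computation at all. Your additional outline of that computation (first-order kernel identities killing the Ricci term, $\mathrm{tr}(PR)\ge 0$, second variation to control the one negative term) is correct as far as it goes, though be aware that the final ``assembly'' is more delicate than a straight sum of $H(e_\mu,e_\nu)\ge 0$---Mok's argument uses complex variations and an adapted frame rather than a naive frame sum, so if you want to fill in that step you should follow \cite{Mok}, \S1.2 closely.
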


\begin{proof}Note that this is purely local problem and so we can deal with this problem for transverse metric $g^T_\alpha$ on local coordinates $V_\alpha$. Let $p\in U_\alpha\subset M$ such that $\pi(p)\in V_\alpha$, where $U_\alpha$ and $V_\alpha$ are local coordinates introduced in Section 3. Then $g^T_\alpha$ is evolved by K\"ahler-Ricci flow on $V_\alpha$.
Hence $F(R^T)$  restricted on $V_\alpha$ is exactly the same as in K\"ahler case, hence has the formula as in \eqref{E-8-3}; see \cite{Mok} (6) for example. Then the proposition follows from the results of Bando \cite{Bando} (complex dimension three) and Mok \cite{Mok} (all dimensions) in K\"ahler case, see Section 1.2 in \cite{Mok} for details. 
\end{proof}

Then we can get that 
 
 \begin{prop}\label{P-8-4}Along the Sasaki-Ricci flow, if the initial metric has nonnegative transverse holomorphic bisectional curvature, then the evolved metrics also have nonnegative transverse holomorphic bisectional curvature; if the initial transverse holomorphic bisectional curvature is positive somewhere, then the transverse holomorphic bisectional curvature is positive for $t>0$. 
\end{prop}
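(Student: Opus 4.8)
The plan is to deduce Proposition \ref{P-8-4} from Hamilton's maximum principle for tensors applied to the reaction-diffusion equation \eqref{E-8-2}. I would first regard $R^T$ as a time-dependent section of the bundle $\cE$ of Hermitian-symmetric bisectional-curvature-type tensors over $\cD^{1,0}$, equipped with the fiber metric and connection induced by $g^T$ and $\nabla^T$. Because $R^T$ is basic and the Sasaki-Ricci flow fixes $\xi$ together with the transverse holomorphic structure, $R^T(t)$ remains a basic section and $\t^T$ acts on it as a genuine horizontal Laplacian; thus \eqref{E-8-2} reads $\p_t S = \t^T S + F(S) + S$ with fiberwise reaction term $F(S)+S$.

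Next I would verify the hypotheses of the tensor maximum principle for the closed convex cone $\cC = \{S\in\cE : S\geq 0\}$ of Definition \ref{D-8-2}. This cone is fiberwise closed and convex, and it is invariant under parallel transport with respect to $\nabla^T$ since $\nabla^T g^T=0$ and $\nabla^T J=0$. The remaining (and decisive) requirement is that the reaction ODE $\tfrac{d}{dt}S=F(S)+S$ preserve $\cC$; by the standard boundary criterion it suffices to show that whenever $S\in\partial\cC$ has a null direction, i.e.\ $S(X,\bar X;Y,\bar Y)=0$ for nonzero $X,Y\in\cD^{1,0}$, the reaction vector points into the cone there. At such a direction the linear term $S$ contributes nothing, so the needed inequality is exactly $F(S)(X,\bar X;Y,\bar Y)\geq 0$, which is the null-vector condition of Proposition \ref{P-8-2}. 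Hence $\cC$ is preserved and nonnegativity of $H^T$ is maintained along the flow.

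For the strict positivity I would invoke Hamilton's strong maximum principle for reaction-diffusion systems. The weak principle already gives $R^T\geq 0$ for all $t>0$; to upgrade this, one studies the null directions of $R^T$, which for $t>0$ form a parallel, time-independent subbundle invariant under the reaction term. Combined with Proposition \ref{P-8-2} and the hypothesis that $H^T>0$ somewhere at $t=0$, this forces the null subbundle to be trivial for every $t>0$, i.e.\ $H^T>0$ everywhere. Since positivity is a pointwise condition and each local quotient $(V_\alpha,g^T_\alpha)$ carries a genuine K\"ahler-Ricci flow, all of these steps reduce fiberwise to the computations and strong-maximum-principle arguments of Bando \cite{Bando} and Mok \cite{Mok}, so no algebra beyond Proposition \ref{P-8-2} is required.

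The step I expect to be the main obstacle is justifying that the maximum-principle machinery, which is ordinarily stated for a metric vector bundle over a compact Riemannian manifold, applies verbatim to the \emph{basic} tensor $R^T$ with the transverse Laplacian $\t^T$ and transverse connection $\nabla^T$, without spurious contributions from the one-dimensional foliation $\cF_\xi$. I would handle this by exploiting that $R^T$ is basic, so every quantity entering the argument is constant along the orbits of $\xi$, and that on each chart $\t^T$ restricted to basic sections coincides with the bundle Laplacian of the genuine transverse K\"ahler structure on $V_\alpha$ --- the Haefliger-cocycle description of Section 3. This lets the compactness of $M$ and the pointwise first-contact argument in Hamilton's proof go through unchanged.
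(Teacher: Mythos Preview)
Your plan for the nonnegativity statement is correct and is the same strategy the paper follows: Hamilton's tensor maximum principle applied to \eqref{E-8-2} together with the null-vector condition of Proposition \ref{P-8-2}. The paper, however, does not invoke the abstract machinery but carries out the first-contact argument by hand, perturbing by $\epsilon f S_0$ with $S_0{}_{i\bar jk\bar l}=\tfrac12(g^T_{i\bar j}g^T_{k\bar l}+g^T_{i\bar l}g^T_{k\bar j})$ and a time-dependent \emph{basic} function $f$ solving a linear heat equation. The technical point you flag in your last paragraph --- that the auxiliary objects must be basic so that $\t^T$ really is the transverse Laplacian --- is exactly where the paper spends its effort: it checks that $\xi f$ satisfies the same linear parabolic equation with zero initial data, hence $\xi f\equiv 0$. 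Your Haefliger-cocycle reduction is an acceptable alternative for this step.

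For the strict-positivity part the paper takes a different and more elementary route than your invocation of Hamilton's strong maximum principle. It reduces to the \emph{scalar} strong maximum principle: set $f_0(p)=\min_{|X|=|Y|=1}R^T_p(X,\bar X;Y,\bar Y)$, note $f_0$ is basic, nonnegative, and positive somewhere, and let $f$ solve a linear heat equation with initial data $f_0$; then $R^T-fS_0\ge 0$ by the same first-contact argument, while $f>0$ for $t>0$ by the scalar strong maximum principle. Your approach via the tensor strong maximum principle can be made to work, but your phrasing ``null directions \ldots\ form a parallel, time-independent subbundle'' is imprecise for bisectional curvature: the null set consists of \emph{pairs} $(X,Y)$, not vectors in $\cD^{1,0}$, so it is not literally a subbundle. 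You would need to argue that a null pair at some $(p,t_1)$ with $t_1>0$ propagates by parallel transport to all of $M$ and persists as $t\to 0^+$, contradicting positivity somewhere at $t=0$. The paper's barrier argument sidesteps this bookkeeping entirely.
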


\begin{proof}With Proposition \ref{P-8-2}, the result follows similarly as in Proposition 1 in \cite{Bando}, where Hamilton's maximum principle for tensors was used in K\"ahler setting.  The only slight difference is that the auxiliary function should be basic in Sasaki case.  We shall  sketch the proof as follows. 

It is sufficient to prove the proposition for a short time, thus we consider it in a short closed interval without specification. In particular, all metrics evolved have bounded geometry, namely, they are all equivalent and have bounded curvature (and higher derivatives are bounded also). First we define a parallel tensor field $S_0$ as in local transverse holomorphic coordinates
\[
{S_0}_{i\bar jk\bar l}= \frac{1}{2}\left(g^T_{i\bar j}g^T_{k\bar l}+g^T_{i\bar l}g^T_{k\bar j}\right),
\]
where $S_0$ has the same type as $R^T$ and $S_0>0$ everywhere in the sense of Definition \ref{D-8-2}. Then there exists a positive constant $C\in \R$ such that
\[
-CS_0\leq \frac{d S_0}{dt}\leq CS_0. 
\]

When $f$ is a basic function, then $fS_0$ also has the same type as $R^T$ and so $F(R^T+fS_0)$ is well defined. It is clear that $F(S)$ is smooth for $S$, hence there exists a positive constant $D$ such that
\[
F(R^T)\geq F(R^T+ fS_0)-D|f|S_0, |f|\leq 1. 
\] 
Now we consider the tensor $R^T+\epsilon fS_0$ for some small positive number $\epsilon$ and time dependent function $f(t, x)$. We compute, if $f$ is basic for all $t$, 
\begin{equation}\label{E-8-4}
\begin{split}
\frac{\p }{\p t } (R^T+\epsilon_0 fS_0)=&\t R^T+F(R^T)+R^T+\epsilon \frac{\p }{\p t}S_0+\epsilon f \frac{\p S_0}{\p t}\\
=&\t (R^T+\epsilon f S_0)+F(R^T+\epsilon fS_0)+R^T+\epsilon fS_0\\
&+\epsilon \left(\frac{\p f}{\p t}-\t f\right)S_0+F(R^T)-F(R^T+\epsilon fS_0)\\
&+\epsilon f\left(\frac{\p S_0}{\p t}-S_0\right)\\
\geq & \t (R^T+\epsilon f S_0)+F(R^T+\epsilon fS_0)+R^T+\epsilon fS_0\\
&+\epsilon S_0\left(\frac{\p f}{\p t}-\t f-(C+D+1) f\right).
\end{split}
\end{equation}

Let $f(0, \cdot)\equiv 1$ and let $f(t, x)$ satisfy the equation
\[
\frac{\p f}{\p t}-\t f-(C+D+1) f=1. 
\]
In local coordinates, we can write
\[
\t f= \xi^2 f+g^{i\bar j}_T\p_i\p_{\bar j} f. 
\]
Clearly, $\xi g^T_{i\bar j}=0$ and $\xi$ can commute with $\p_i, \p_{\bar j}$ when taking derivatives. So we have
\[
\xi (\t f)=\t (\xi f).
\]
Hence $\xi f$ satisfy the equation
\[
\frac{\p (\xi f)}{\p t}=\t (\xi f)- (C+D+1) \xi f.
\]
By the maximum principle, $\xi f\equiv 0$ since $\xi f=0$ at $t=0$. Hence \eqref{E-8-4} is justified. Moreover it is clear $f>0$ for $t>0$. 
Since $R^T\geq 0$ at $t=0$, $R^T+\epsilon fS_0>0$ at $t=0$. Now we claim $R^T+\epsilon fS_0>0$ for all $t$ and small $\epsilon$. If not, there is a first time $t_0$ such that $R^T+\epsilon fS_0(X, \bar X, Y, \bar Y)=0$ at some point $(t_0, p)$ for some nonzero vectors $X, Y\in \cD^{1, 0}$ and $R^T+\epsilon fS_0>0$ for $t<t_0$. 
We now consider the problem locally on $V_\alpha$ and let $X_\alpha=\pi_{*} (X), Y_\alpha=\pi_{*}(Y)\in T^{1, 0} V_\alpha$. 
We can extend $X_\alpha, Y_\alpha$ as follows. At $t=t_0$, we extend $X_\alpha, Y_\alpha$ to a normal  neighborhood of $(t_0, \pi(p))$ in $[0, t_0]\times V_\alpha$  by parallel transformation along radial geodesics of $g^T_\alpha$ at $(t_0, \pi(p))$ such that $\nabla X_\alpha=\nabla Y_\alpha=0$ at $(t_0, \pi(p))$, and then we extend $X_\alpha, Y_\alpha$ around $t_0$ such that $\p_t X_\alpha, \p_tY_\alpha=0$. We can then compute, at $(t_0, p)$, 
\[
\frac{\p }{\p t} (R^T+\epsilon fS_0) (X_\alpha, \bar X_\alpha, Y_\alpha, \bar Y_\alpha)=\left(\frac{\p }{\p t} (R^T+\epsilon fS_0) \right) (X_\alpha, \bar X_\alpha, Y_\alpha, \bar Y_\alpha)\leq 0,
\]
and 
\[
0\leq \t \left((R^T+\epsilon fS_0)(X_\alpha, \bar X_\alpha, Y_\alpha, \bar Y_\alpha)\right)= (\t (R^T+\epsilon f S_0)) (X_\alpha, \bar X_\alpha, Y_\alpha, \bar Y_\alpha).
\]
But by \eqref{E-8-4}, we get
\[
\begin{split}
0&\geq \left(\frac{\p }{\p t} (R^T+\epsilon fS_0) \right) (X_\alpha, \bar X_\alpha, Y_\alpha, \bar Y_\alpha)\\
 &\geq (\t (R^T+\epsilon f S_0)) (X_\alpha, \bar X_\alpha, Y_\alpha,\bar Y_\alpha)+\epsilon S_0(X_\alpha, \bar X_\alpha, Y_\alpha, \bar Y_\alpha)>0,
\end{split}
\]
since $F(R^T+\epsilon fS_0)$ satisfies the null vector condition and $(R^T+\epsilon fS_0)(X, \bar X, Y, \bar Y)=0$. Contradiction. 
Now let $\epsilon \rightarrow 0$, we prove that $R^T\geq 0$ for all $t$. 

The proof for the last statement of the proposition is also similar. But we shall choose the function $f$ more carefully. Recall $R^T\geq 0$ and $R^T$ is positive somewhere, at $t=0$. We can define a function $f_0(p)=\min R^T_p(X, \bar X, Y, \bar Y)$ for $p\in M, X, Y\in \cD^{1, 0}_p,  |X|=|Y|=1$. It is clear $f_0$ is nonnegative and cannot be identically zero. Since $\xi$ acts on $g$ isometrically,  $f_0$ is constant along any orbit of $\xi$. Hence $\xi f_0=0$ is well defined. Similarly we compute 
\[
\frac{\p }{\p t} (R^T-fS_0)\geq \t (R^T-fS_0)+F(R^T-fS_0)+\left(-\frac{\p f}{\p t}+\t f-(C+D)|f|\right)S_0.\]

We choose  $f(0, \cdot)=f_0$ such that
\[
\frac{\p f}{\p t}=\t f-(C+D) f.
\]
Note that $\xi f\equiv 0$ since $\xi f_0=0$. 
By our choice of $f$,  $R^T-fS_0\geq 0$ at $t=0$ and by the similar argument as above, we can get that $R^T-fS_0\geq 0$. If we let $\tilde f=e^{-(C+D)t}f$, then we can get that
\[
\frac{\p \tilde f}{\p t}=\t \tilde f. 
\]
We then get  $\tilde f>0$ for $t>0$, hence $f(t)>0$ when $t>0$. It follows that  $R^T>0$ for $t>0$. This completes the proof. 
\end{proof}

Then we study the Sasaki-Ricci flow for the metric with transverse holomorphic bisectional curvature. We have
\begin{theo}\label{T-8-1}Suppose $(M, \xi, g)$ is a Sasakian structure such that $g$ has nonnegative transverse holomorphic bisectional curvature then the Sasaki-Ricci flow with initial metric $g$ exists for all time with bounded curvature and it converges to a Sasaki-Ricci soliton $(M, \xi_\infty, g_\infty)$ subsequently.  \end{theo}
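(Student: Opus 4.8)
The plan is to combine the curvature preservation of Proposition \ref{P-8-4} with the a priori estimates of Theorem \ref{T-6-1} to obtain uniformly bounded geometry, and then to run a Cheeger--Gromov--Hamilton compactness argument whose limit is forced to be a soliton by the monotonicity formula \eqref{E-3-21}. The flow exists for all time by the long-time existence established in \cite{SWZ}. I would fix $\tau\equiv1$ (take $\tau_0=1$ in Proposition \ref{P-3-6}, so that $\p_t\tau=\tau-1$ gives $\tau\equiv1$), so that the relevant soliton equation is $R^T_{i\bar j}+f_{i\bar j}=g^T_{i\bar j}$. By Proposition \ref{P-8-4} the nonnegativity of the transverse holomorphic bisectional curvature is preserved, so $R^T(V,\bar V;U,\bar U)\ge0$ for all $t$. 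As in the K\"ahler case, nonnegativity of the bisectional curvature means the full transverse curvature tensor is controlled by its trace: in a transverse unitary frame one has $0\le R^T_{i\bar i j\bar j}\le R^T_{i\bar i}\le R^T$, and the Cauchy--Schwarz inequality for the nonnegative curvature operator then bounds every component $|R^T_{i\bar j k\bar l}|$ by a multiple of $R^T$. Since $R^T\le C$ by Theorem \ref{T-6-1}, the transverse curvature is uniformly bounded. Combining this with the relation \eqref{E-2-curvature} between $R$ and $R^T$ and with the identity $R(X,\xi)Y=g(\xi,Y)X-g(X,Y)\xi$ from Proposition \ref{P-2-1}, which expresses the $\xi$-directions of the curvature purely through the (uniformly equivalent) metric, I conclude that the full Riemannian curvature tensor of $g(t)$ is uniformly bounded.

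Next I would upgrade this to bounded geometry. With a uniform curvature bound, Shi's local derivative estimates---applied to the induced K\"ahler--Ricci flow on each local chart $V_\alpha$ and patched together, since all quantities involved are basic/transverse---give uniform bounds on all covariant derivatives of the curvature for $t\ge1$. For the injectivity radius I would use non-collapsing: Proposition \ref{P-3-6} shows $\mu(g(t),1)$ is monotone, which is exactly the hypothesis driving the transverse non-collapsing of Lemma \ref{L-5-2}. Because the length of the Reeb orbits is unchanged along the flow (so the $\xi$-direction neither collapses nor expands), the transverse volume lower bound of Lemma \ref{L-5-2} upgrades to a genuine lower volume bound for metric balls, and with the curvature bound this yields a uniform lower bound on $\operatorname{inj}(M,g(t))$. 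Together with the uniform diameter bound of Theorem \ref{T-6-1} and the constant total volume, the family $\{(M,\xi,g(t))\}$ has uniformly bounded geometry.

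I would then extract a limit by the Cheeger--Gromov--Hamilton compactness theorem: for any $t_i\to\infty$ there is a subsequence along which $(M,g(t_i))$ converges in the $C^\infty$ Cheeger--Gromov sense to a limit $(M_\infty,g_\infty)$, and since the diameter and volume are fixed the convergence takes place on $M$ itself up to diffeomorphisms. The extra point in the Sasakian setting is to carry the Sasakian structure to the limit: the Reeb field $\xi$ is Killing of controlled length, and $\eta$ and $\Phi$ are determined by $\xi$ and $g$ and are uniformly bounded with all derivatives, so their limits $\xi_\infty,\eta_\infty,\Phi_\infty$ exist and satisfy the characterization of Proposition \ref{P-2-1}, producing a Sasakian structure $(M_\infty,\xi_\infty,g_\infty)$.

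Finally, to identify the limit as a soliton, note that $\mu(g(t),1)$ is monotone nondecreasing by Proposition \ref{P-3-6} and is bounded above: the constant test function $f\equiv\log V$, with $V=Vol(M)$ constant along the flow, gives $\mu(g(t),1)\le \tfrac1V\int_M R^T\,dV+\log V$, which is uniformly bounded by the estimate $R^T\le C$ of Theorem \ref{T-6-1}. Hence $\mu(g(t),1)$ converges to a finite limit and $\int_0^\infty (d\mu/dt)\,dt<\infty$. By the monotonicity formula \eqref{E-3-21}, the defect
\[
\int_M\Big|R^T_{i\bar j}+f_{i\bar j}-g^T_{i\bar j}\Big|^2e^{-f}\,dV+\int_M|f_{ij}|^2e^{-f}\,dV
\]
tends to $0$ along a subsequence of times. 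Taking $f_{t_i}$ to be the minimizers realizing $\mu(g(t_i),1)$, which are weak solutions of \eqref{E-w-1} and hence smooth and uniformly controlled by elliptic estimates on the bounded-geometry metrics, I would pass to the Cheeger--Gromov limit and obtain a basic potential $f_\infty$ with
\[
R^T_{i\bar j}+(f_\infty)_{i\bar j}-(g_\infty)^T_{i\bar j}=0,\qquad (f_\infty)_{ij}=0,
\]
so that $\nabla^T f_\infty$ is a transverse holomorphic vector field and $(M_\infty,\xi_\infty,g_\infty)$ is a Sasaki--Ricci (gradient shrinking) soliton. The main obstacle I anticipate is the compactness step: promoting the transverse non-collapsing of Lemma \ref{L-5-2} to a genuine injectivity-radius bound, and above all ensuring that the entire Sasakian structure, not merely the metric, survives in the Cheeger--Gromov limit with $\xi_\infty$ still a unit Killing field, which is what makes the limiting object an honest Sasaki--Ricci soliton.
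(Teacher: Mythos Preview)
Your overall strategy coincides with the paper's: preserve nonnegativity of transverse bisectional curvature (Proposition \ref{P-8-4}), dominate the transverse curvature by $R^T$ and hence by Theorem \ref{T-6-1}, lift this via \eqref{E-2-curvature} and Proposition \ref{P-2-1} to a full curvature bound, and then run a compactness argument combined with $\cW$-monotonicity to identify the limit. The curvature and compactness steps are essentially the same as the paper's, though you are more explicit about the non-collapsing input.

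The genuine gap is in your soliton identification. The monotonicity formula \eqref{E-3-21} is proved for the \emph{coupled} flow \eqref{E-3-20}, in which $f$ evolves by the backward heat equation; it does not say that the defect
\[
\int_M\Big|R^T_{i\bar j}+f_{i\bar j}-g^T_{i\bar j}\Big|^2e^{-f}\,dV+\int_M|f_{ij}|^2e^{-f}\,dV
\]
evaluated at the time-$t$ \emph{minimizer} equals $d\mu/dt$. From $\int_0^\infty(d\mu/dt)\,dt<\infty$ you can only conclude (by running the backward heat equation on unit intervals from a terminal minimizer) that the defect tends to zero along some $f(s_i)$ which are backward-heat evolutes, not minimizers at time $s_i$. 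You then have no a priori reason why your limit potential $f_\infty$, obtained as a limit of \emph{minimizers} $f_{t_i}$, satisfies the soliton equation.

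The paper circumvents this as follows: instead of taking a Cheeger--Gromov limit of metrics, it invokes Hamilton's compactness theorem for \emph{flows} to obtain a limit Sasaki--Ricci flow $(M,g_\infty(t))$, on which $\mu(g_\infty(t),1)$ is necessarily constant (being the limit of the monotone bounded $\mu(g(t_i+t),1)$). On the limit flow one then takes a minimizer $w_0$ at some time $T$, solves the backward heat equation on $[0,T]$, observes that $\cW(g_\infty(t),w(t),1)$ is trapped between the constant values $\mu(g_\infty(0),1)$ and $\mu(g_\infty(T),1)$, and concludes from Proposition \ref{P-3-4} that the defect is identically zero. This yields the soliton equations without ever comparing minimizers to backward-heat solutions along the original flow. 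To repair your argument, replace your single-time Cheeger--Gromov limit by a Hamilton limit of flows and carry out the minimizer/backward-heat argument on the limit.
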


\begin{proof}Since  the transverse holomorphic bisection curvature becomes positive along the Sasaki-Ricci flow, then the transverse holomorphic bisectional curvature is then bounded by  its transverse scalar curvature, which is bounded along the flow by Theorem 7.1. 
Hence the transverse sectional curvature defined by $g^T$ is  bounded. 
It follows that  the sectional curvature of $g$ for any two unit vectors $X, Y\in \cD$ is then bounded by \eqref{E-2-curvature}. Moreover, by the definition, the sectional curvature of $g$ for any two plane in $TM$ containing $\xi$ is $1$, hence the sectional curvature of $g$ is uniformly bounded along the Sasaki-Ricci flow. Moreover the diameter of $M$ is  uniformly bounded and  the volume is fixed along the flow. It is well known that the Sobolev constants and the injectivity radii are uniformly bounded along the flow. 
Hence for any $t_i\rightarrow \infty$, there is a subsequence such that $(M,  g(t_i))$ converges to a compact  manifold $(M, \xi_\infty, g_\infty)$ in Cheeger-Gromov sense. Namely there is a sequence of diffeomorphisms $\Psi_i$ such that $\Psi_i^{*} g(t_i)$ converges to $g_\infty$, if necessarily, after passing to a subsequence. And ${\Psi_i}_{*}\xi$ and $\Psi_i^{*}\eta(t_i)$ converge to $\xi_\infty, \eta_\infty$ as tensor fields, which are compatible with $g_\infty$ such that $(M, \xi_\infty, g_\infty, \eta_\infty)$ defines a Sasakian structure.

To understand the structure of the limit metric, we recall  Hamilton's compactness theorem for Ricci flow, which applies to the Sasaki setting provided that  the curvature, the diameter and the volume are all uniformly bounded. For any $t_i\rightarrow \infty$, we consider the sequence of Sasakian-Ricci flow $(M, g(t_i+t))$. Then  after passing to some subsequence, it converges to the limit Sasaki-Ricci flow $(M, g_\infty(t)).$ As in K\"ahler setting \cite{Sesum}, one can prove that the limit flow is actually one parameter family of Sasakian-Ricci solitons evolved along the Sasaki-Ricci flow.  First observe that along the flow, the $\mu$ functional is uniformly bounded from above. 
Let $u(t)$ be the normalized (transverse) Ricci potential, then
\[
\mu(g(t), 1)\leq \int_M e^{-u}(R^T+u+|\nabla u|^2)d V_{g(t)}\leq C
\]
for some uniformly bounded constant $C$ since $R^T, |u|$ and $|\nabla u|$ are uniformly bounded along the flow. And $\mu(g(t), 1)$ is increasing, hence $\lim_{t\rightarrow \infty}\mu(g(t), 1)$ exists.   This implies that the $\mu$ functional for $(M, \xi_\infty, g_\infty(t))$ is a constant. To show $g_\infty(t)$ is a Sasakian-Ricci soliton,  first  we assume that  for some $T\in (0, \infty)$, $\mu(g_\infty(T), 1)$ has a smooth positive minimizer $w_0$ (we are using the form of \eqref{E-3-22}), then consider the backward heat equation for $w_0$ such that
\[
\frac{dw(t)}{dt}=-\t w+(R^T-n)w, w(T)=w_0. 
\]
As in Proposition \ref{P-3-5}, $w(t)$ is smooth and positive. In particular, $\cW(g_\infty(t), w(t), 1)$ is increasing for $t\in [0, T]$. Hence we get  $\mu(g_\infty(t), 1)\leq \cW(g_\infty(t), w(t), 1)\leq \mu(g_\infty(T), 1)$. But $\mu(g_\infty(t), 1)$ is constant along the flow. This implies that  $w(t)$ minimizes $\cW(g_\infty, w(t), 1)$. In particular, let $f (t)=-log w(t)$, 
then we have
\begin{equation}\label{E-8-5}
\frac{d }{d t}\cW(g_\infty(t), f(t), 1)\equiv0
\end{equation}
for $t\in [0, T]$. 
By Proposition \ref{P-3-4}, we get that, for any $t\in [0, T]$,
\[{R^T_\infty}_{i\bar j}+f_{i\bar j}-{g^T_\infty}_{i\bar j}=0, f_{ij}=0.\] 

In general, let $w_0$ be a nonnegative  minimizer in $W^{1, 2}_B$ for $\mu(g_\infty(T), 1)$. 
Let $w(t)$ be the solution of the backward heat equation
\[
\frac{\p w}{\p t}=-\t w+ (R^T-n\tau^{-1}) w
\]
such that $w(T)=w_0$. The standard parabolic regularity theory implies that $w(t)\in C^\infty$. By  Proposition \ref{P-3-5}, $w(t)\geq 0$ for $t\in [0, T)$. 
It is clear that  $w(t)$ can never be identically zero, hence  $w(t)>0$ for all $t\in [0, T)$. 
Note we can repeat the argument above to get that  $w(t)$ minimizes $\mu(g_\infty(t), 1)$ and  that $(M, g_\infty(t))$ is a Sasaki-Ricci soliton for any $t\in [0, T)$. Hence $(M, g_\infty(t))$ is a Sasaki-Ricci soliton for any $t\in [0, \infty)$. 
 \end{proof}

\begin{rmk} On Riemannian surfaces, there exists nontrivial Ricci soliton with positive curvature on sphere with orbifold singularity \cite{Wu, CW}. Note that this corresponds to the case of  quasi-regular Sasakian metric on wighted 3 sphere with positive (transverse) bisectional curvature, in particular the limit soliton does not have to be transverse Kahler-Einstein. It would be very interesting to classify Sasaki-Ricci soliton with positive transverse bisectional curvature. I am grateful to Xiuxiong Chen and Song Sun for valuable discussions on this topic.
\end{rmk}

\section{Appendix}
We summarize some results for Sakakian manifolds with the positive transverse bisectional curvature.  The topology of compact Sasakian manifolds with positive curvature in suitable sense are  well studied, for example see \cite{Goldberg, Moskal, Tachibana-Ogawa, Tanno, BGN}.  

First we need several formulas about basic forms on Sasakian manifolds. All these formulas are  similar as as the corresponding formulas in the K\"ahler setting, see Morrow-Kodaira \cite{MK} for the corresponding K\"ahler version.

\begin{prop}Let $\phi=\phi_{A\bar B}dz_A\wedge dz_{\bar B}$ be a basic $(p, q)$ form on a Sasakian manifold $(M, \xi, g)$. 
Then
\[
\begin{split}
\p \phi=\nabla_i^T \phi_{A\bar B}dz_i\wedge dz_A\wedge dz_{\bar B},\\
\bar \p \phi=\nabla_{\bar j}^T\phi_{A\bar B} dz_{\bar j}\wedge dz_A\wedge dz_{\bar B}. 
\end{split}
\]
\end{prop}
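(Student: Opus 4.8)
The plan is to reduce the statement to the corresponding identity on a genuine K\"ahler manifold via the local foliation charts, where it becomes the standard fact that the holomorphic connection coefficients drop out upon antisymmetrization. First I would pass to a foliation chart $(x, z_1, \dots, z_n)$ on $U_\alpha$ with $\xi=\p_x$ and submersion $\pi_\alpha: U_\alpha\to V_\alpha$, as recalled in Section 3. Since $\phi$ is basic, $\iota_\xi\phi=0$ forces $\phi$ to have no $dx$-component, while $L_\xi\phi=0$ forces each coefficient $\phi_{A\bar B}$ to be independent of $x$ (equivalently $\p_x\phi_{A\bar B}=0$). Hence $\phi$ descends to an honest $(p,q)$-form $\phi_\alpha=\phi_{A\bar B}\,dz_A\wedge dz_{\bar B}$ on $V_\alpha$, and under the isomorphism $d\pi_\alpha$ the operators $\p,\bar\p$ and the transverse connection $\nabla^T$ correspond exactly to $\p,\bar\p$ and the K\"ahler Levi-Civita connection of the genuine K\"ahler metric $g^T_\alpha$ on $V_\alpha$; this is precisely the Haefliger-cocycle compatibility recorded in Section 3, e.g. $d\pi_\alpha(\nabla^T_XY)=(\nabla^T_\alpha)_{X_\alpha}Y_\alpha$. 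Because the assertion is local and both sides are basic, it then suffices to prove the identity for an ordinary $(p,q)$-form on $(V_\alpha, g^T_\alpha)$.

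Next I would carry out the standard K\"ahler computation on $V_\alpha$. By the definition of $d$ together with the type decomposition, $\p\phi_\alpha=\p_i\phi_{A\bar B}\,dz_i\wedge dz_A\wedge dz_{\bar B}$ with \emph{ordinary} derivatives $\p_i$. On the other hand, since $g^T_\alpha$ is K\"ahler the only nonvanishing Christoffel symbols are the holomorphic ones $\Gamma^s_{ij}$ (and their conjugates) defined in the proof of Proposition \ref{P-3-i}, so that
\[
\nabla_i^T\phi_{A\bar B}=\p_i\phi_{A\bar B}-\sum_{l=1}^{p}\Gamma^{s}_{i\alpha_l}\,\phi_{\alpha_1\cdots s\cdots\alpha_p\,\bar B},
\]
the conjugate indices $\bar B$ receiving no correction under $\nabla_i^T$ because the mixed coefficients vanish. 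Wedging with $dz_i\wedge dz_A\wedge dz_{\bar B}$ and summing, each correction term carries the factor $\Gamma^{s}_{i\alpha_l}\,dz_i\wedge dz_{\alpha_l}$; since $\Gamma^{s}_{i\alpha_l}$ is symmetric in $i$ and $\alpha_l$ (torsion-freeness, i.e. the K\"ahler condition) while $dz_i\wedge dz_{\alpha_l}$ is antisymmetric, all such terms cancel. This gives $\nabla_i^T\phi_{A\bar B}\,dz_i\wedge dz_A\wedge dz_{\bar B}=\p\phi$. The identity for $\bar\p$ follows by complex conjugation: now $\bar\p\phi=\p_{\bar j}\phi_{A\bar B}\,dz_{\bar j}\wedge dz_A\wedge dz_{\bar B}$, the correction to $\nabla_{\bar j}^T\phi_{A\bar B}$ comes only from the $\bar B$ slots via the symmetric $\Gamma^{\bar q}_{\bar j\bar\beta_l}$, and this cancels against the antisymmetric $dz_{\bar j}\wedge dz_{\bar\beta_l}$.

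The one point requiring care — and the only genuine obstacle — is the reduction in the first paragraph: one must check that the descended object $\phi_\alpha$ is well defined and that $\nabla^T$, $\p$, $\bar\p$ really intertwine with their K\"ahler counterparts under $d\pi_\alpha$, compatibly on the overlaps $U_\alpha\cap U_\beta$. Once this Haefliger-cocycle bookkeeping is in place — all of it already established in Section 3 — the remaining content is the purely algebraic cancellation of the symmetric connection coefficients against the antisymmetric wedge, which is routine. I would therefore devote the bulk of the write-up to the descent step and merely indicate the index manipulation.
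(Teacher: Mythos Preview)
Your proposal is correct and follows essentially the same approach as the paper: the core of both arguments is the observation that the only nonzero Christoffel symbols of the transverse K\"ahler metric are the holomorphic $\Gamma^{s}_{i\alpha_l}$ (and conjugates), which are symmetric in $i,\alpha_l$ and hence cancel against the antisymmetric wedge $dz_i\wedge dz_{\alpha_l}$. The paper's proof simply writes this cancellation down in one line in local coordinates, whereas you spend most of your write-up on the descent to $V_\alpha$ via the Haefliger cocycle; that extra care is fine but not needed here, since the computation is already well defined in the foliation chart $(x,z_1,\dots,z_n)$ on $M$ once $\phi$ is basic.
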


\begin{proof}
We have
\[
\nabla_i^T \phi_{A\bar B}=\p_i\phi_{A\bar B}-\Gamma_{i\alpha_k}^\sigma \phi_{\alpha_1\cdots \alpha_{k-1} \sigma \alpha_{k+1}\cdots \alpha_p\bar B}
\]
Note that $\Gamma^\sigma_{i\alpha_k}=\Gamma^\sigma_{\alpha_k i}$ ($\Gamma$ is symmetric on $i$ and $\alpha_k$ ), while $dz_i\wedge dz_A\wedge dz_{\bar B}$ is skew-symmetric on $dz_i$ and $dz_{\alpha_k}$. Hence
\[
\nabla_i^T \phi_{A\bar B}dz_i\wedge dz_A\wedge dz_{\bar B}=\p_i\phi_{A\bar B}dz_i\wedge dz_A\wedge dz_{\bar B}=\p \phi. 
\]
Similarly we can get the formula for $\bar \p \phi.$
\end{proof}

\begin{prop}Let $(M, \xi, g)$ be a compact  Sasakian manifold. Then 
\begin{equation}\label{E-9-i1}
\begin{split}
(\bar \p_B \phi, \psi)&=(\phi, \bar \p^{*}_B\psi), \mbox{for}\; \phi\in \Omega^{p, q-1}_B, \psi\in \Omega^{p, q}_B,\\
(\p_B \phi, \psi)&=(\phi, \p^{*}_B\psi),  \mbox{for}\; \phi\in \Omega^{p-1, q}_B, \psi\in\Omega^{p, q}_B,\\ 
(d_B\phi, \psi)&=(\phi, \delta_B \psi), \mbox{for}\; \phi\in \Omega^{r-1}_B, \psi\in \Omega^{r}_B. 
\end{split}
\end{equation}
In particular, we have
\[
\bar\p^{*}_B\psi_{A\bar B}=- (-1)^p g^{i\bar j}_T\nabla_i\psi_{A\bar j \bar B},
\]
where $\psi\in \Omega^{p, q+1}_B$. 
\end{prop}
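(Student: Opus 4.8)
The plan is to establish all three adjointness identities simultaneously from the $d_B$-case via Stokes' theorem, then split them into the $\p_B$ and $\bar\p_B$ statements by a bidegree argument, and finally extract the local formula for $\bar\p_B^{*}$ using the integration by parts of Proposition \ref{P-3-i}.

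First I would recall the inner product \eqref{E-9-i2}, which for basic $r$-forms $\alpha,\beta$ can be written globally as $(\alpha,\beta)=\int_M\eta\wedge\alpha\wedge *_B\bar\beta$; this is a genuine $(2n+1)$-form because $*_B\beta\in\Omega^{2n-r}_B$ and the extra factor $\eta$ supplies the missing degree. I would also record the structural properties of the transverse star: $*_B\colon\Omega^{p,q}_B\to\Omega^{n-q,n-p}_B$, that $*_B\circ *_B=(-1)^{r}$ on $\Omega^{r}_B$, and that $*_B$ commutes with complex conjugation. For the $d_B$-adjointness, take $\phi\in\Omega^{r-1}_B$, $\psi\in\Omega^{r}_B$ and integrate $d$ of the $2n$-form $\eta\wedge\phi\wedge *_B\bar\psi$ over the closed manifold $M$. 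Expanding by Leibniz produces three terms. The term $d\eta\wedge\phi\wedge *_B\bar\psi$ is a \emph{basic} form of degree $2n+1$, hence vanishes identically, since a nonzero basic form has degree at most $2n$; this is exactly the Sasakian substitute for the closedness/type argument used in the K\"ahler case, and it is the one structural fact that lets the factor $\eta$ pass transparently through the computation. The two surviving terms give $\int_M\eta\wedge d_B\phi\wedge *_B\bar\psi=(-1)^{r}\int_M\eta\wedge\phi\wedge d_B *_B\bar\psi$, and rewriting the right-hand side with $\delta_B=-*_B d_B *_B$ and the recorded properties of $*_B$ yields $(d_B\phi,\psi)=(\phi,\delta_B\psi)$.

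The identities for $\p_B$ and $\bar\p_B$ then follow by bidegree bookkeeping. Since $*_B$ carries $\Omega^{p,q}_B$ to $\Omega^{n-q,n-p}_B$, the pairing $(\cdot,\cdot)$ vanishes between basic forms of distinct bidegree; restricting $(d_B\phi,\psi)=(\phi,\delta_B\psi)$ to the relevant bidegrees and using $\delta_B=\p_B^{*}+\bar\p_B^{*}$ isolates $(\p_B\phi,\psi)=(\phi,\p_B^{*}\psi)$ and $(\bar\p_B\phi,\psi)=(\phi,\bar\p_B^{*}\psi)$. For the local formula I would take $\phi\in\Omega^{p,q}_B$, $\psi\in\Omega^{p,q+1}_B$ and work in a Haefliger chart $(x,z_1,\dots,z_n)$. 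By the previous proposition, $\bar\p\phi=\nabla^T_{\bar j}\phi_{A\bar B}\,dz_{\bar j}\wedge dz_A\wedge dz_{\bar B}=(-1)^{p}\nabla^T_{\bar j}\phi_{A\bar B}\,dz_A\wedge dz_{\bar j}\wedge dz_{\bar B}$, so the relevant component is $(-1)^{p}\nabla^T_{\bar j}\phi_{A\bar B}$. I would then expand $(\bar\p\phi,\psi)$ via \eqref{E-9-i2} and transfer the derivative $\nabla^T_{\bar j}$ from $\phi$ onto $\overline{\psi}$ by the same divergence-theorem integration by parts used to prove Proposition \ref{P-3-i} (build the globally defined vector field contracting $\phi_{A\bar B}\overline{\psi}$ against $\p_i$ and integrate $d$ of its interior product with $dV=c\det(g^T)\,dx\wedge dZ\wedge d\bar Z$). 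This produces a factor $-1$ and converts $\nabla^T_{\bar j}$ on $\phi$ into $\nabla^T_i$ on $\psi$; comparing with $(\phi,\bar\p_B^{*}\psi)$ gives $\bar\p_B^{*}\psi_{A\bar B}=-(-1)^{p}g^{i\bar j}_T\nabla_i\psi_{A\bar j\bar B}$.

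The main obstacle, and the only place demanding real care, is the sign and conjugation bookkeeping in the Stokes step: matching $(-1)^{r}d_B *_B\bar\psi$ against $*_B\overline{\delta_B\psi}$ requires the precise value of $*_B\circ *_B$ on each degree together with the commutation of $*_B$ with conjugation. Once that is pinned down, the bidegree separation and the local computation are routine transcriptions of the K\"ahler arguments (Morrow--Kodaira \cite{MK}), the only genuinely Sasakian inputs being the degree bound on basic forms exploited above and the reduction of integration over $M$ to the transverse volume form already used in Proposition \ref{P-3-i}.
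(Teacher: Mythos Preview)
Your proposal is correct and is essentially the same argument as the paper's, with only a minor organizational difference. The paper proves the $\bar\p_B$-adjointness directly: it sets $\Phi=\phi\wedge *_B\bar\psi\in\Omega^{n,n-1}_B$, observes $\p_B\Phi=0$ by type, and uses $d(\Phi\wedge\eta)=\bar\p_B\Phi\wedge\eta-\Phi\wedge d\eta$ with $\Phi\wedge d\eta=0$ (again by type) to run Stokes; you instead prove the $d_B$-adjointness first and then split off $\p_B,\bar\p_B$ by bidegree. Both routes rest on the same structural point---that the $d\eta$ contribution in the Stokes computation vanishes for degree/type reasons---and both derive the local formula for $\bar\p_B^{*}$ by the same transverse integration by parts from Proposition~\ref{P-3-i}. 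Your honest flag that the sign/conjugation bookkeeping with $*_B\circ *_B$ is the only delicate spot matches exactly where the paper spends its effort.
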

\begin{proof}The formulas in \eqref{E-9-i1} are well known, see \cite{KT87} for example (the authors deal with only real forms in \cite{KT87}. But it is a straightforward extension to complex forms). For the sake of completeness, we include a proof of the first identity in \eqref{E-9-i1}, see \cite{MK} for the K\"ahler setting for example. By definition
\[
(\bar \p_B \phi, \psi)=\int_M \bar \p_B\phi \wedge *_B \bar \psi \wedge \eta. 
\]
Let $\Phi= \phi\wedge *_B \bar \psi\in \Omega^{n, n-1}_B$. Hence $\p_B\Phi=0$ and $d\Phi=d_B\Phi=\bar \p_B\Phi$. Note that $d\eta=\i g^{T}_{i\bar j}dz_i\wedge dz_{\bar j}\in \Omega^{1, 1}_B$.
We have
\[
d(\Phi\wedge \eta)=-\Phi\wedge d \eta+d_B\Phi\wedge \eta=\bar \p_B\Phi\wedge \eta. 
\] 
It follows that
\[
0=\int_M\bar \p_B \Phi\wedge \eta=\int_M \bar \p_B \phi\wedge *_B \bar \psi\wedge \eta+(-1)^{p+q-1}\int_M \phi\wedge \bar\p_B *_B \bar\psi\wedge \eta.
\]
Hence
\[
\begin{split}
(\bar\p_B \phi, \psi)&=(-1)^{p+q}\int_M \phi\wedge \bar\p_B *_B \bar\psi\wedge \eta\\
&=(-1)^{p+q} \int_M\phi \wedge (-1)^{2n+1-p-q}  *_B*_B\bar\p_B*_B \bar\psi\wedge \eta\\
&=\int_M \phi\wedge *_B \overline{(-1)*_B\p_B *_B \psi}\wedge \eta\\
&=(\phi, \bar\p^{*}\psi). 
\end{split}
\]
Note that $*_B$ can be characterized by \[\phi\wedge *_B\bar \phi=\frac{(d\eta)^n}{2^nn!}=\det(g^T_{i\bar j})dZ\wedge d\bar Z\] for basic forms. Then we have, for basic $(p, q)$ forms $\phi=\phi_{A\bar B} dz_A\wedge dz_{\bar B}$ and $\psi=\psi_{C\bar D}dz_C\wedge dz_{\bar D}$, 
\begin{equation}\label{E-9-i2}
(\phi, \psi)=\int_M \phi_{A\bar B}\overline{ \psi_{C\bar D}}g^{A\bar C}_T g^{B\bar D}_T dvol_g. 
\end{equation}
To compute $\bar \p^{*}_B \psi$, let $\phi=\phi_{A\bar B}dz_A\wedge dz_{\bar B}$ be a basic $(p, q)$ form and $\psi=\psi_{C\bar D_0}dz_C\wedge dz_{\bar D_0} $ be a basic $(p, q+1)$ form. We denote $D_0=iD$ and $B_0=jB$. 
Then we have, 
\[
\begin{split}
(\bar\p_B \phi, \psi)&=\int_M (-1)^p\nabla^T_{\bar j}\phi_{A\bar B} \overline{\psi_{C\bar D_0}}g_T^{A\bar C}g_{T}^{D_0\bar B_0} dvol_g\\
&=-(-1)^p\int_M\phi_{A\bar B}\overline{\nabla^T_j \psi_{C\bar i\bar D}} g_T^{A\bar C}g_{T}^{D\bar B}g^{i\bar j}_Tdvol_g\\
&=(\phi, \bar \p^*_B\psi).  \end{split}
\]
Using \eqref{E-9-i2} and the above, we can get that
\[
\bar \p^*_B\psi_{C\bar D}= -(-1)^p g^{j\bar i}_T\nabla_j\psi_{C\bar i\bar D}.
\]

\end{proof}

Then we have the following, 

\begin{prop}\label{P-9-1}Let $\Phi=\Phi_{A\bar B} dz_A\wedge dz_{\bar B}$ be a basic $(p, q )$ form. Then we have 
\begin{equation}\label{E-9-i3}
\t_{\bar \p}\Phi_{A\bar B}=-g^{i\bar j}_T \nabla_i^T\nabla_{\bar j}^T \Phi_{A\bar B}-\sum_{k}(-1)^k g^{i\bar j}_T[\nabla_i^T, \nabla^T_{\bar {\beta}_k}]\Phi_{A\bar j\bar \beta_1\cdots {\hat {\bar \beta}}_k\cdots \bar \beta_q}.
\end{equation}
Moreover, when $\Phi=\Phi_{\bar B}dz_{\bar B}$ is a basic $(0, q)$ form, we have
\begin{equation}\label{E-9-i4}
\t_{\bar \p}\Phi_{\bar B}=-g^{i\bar j}_T\nabla_i^T\nabla^{T}_{\bar j}\Phi_{\bar B}+\sum_{k}R^T_{\tau \bar\beta_k}\Phi_{\bar \beta_{1}\cdots \bar \beta_{k-1}\bar \tau \bar \beta_{k+1}\cdots \bar \beta_{q}}
\end{equation}
\end{prop}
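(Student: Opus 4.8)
The plan is to reduce the whole computation to the genuine K\"ahler metric $g^T_\alpha$ on a local leaf space $V_\alpha$ and then transport the resulting pointwise identity back to basic forms via the isometries $d\pi_\alpha$, exactly as one proves the Bochner--Kodaira formula in the K\"ahler setting (cf. Morrow--Kodaira \cite{MK}). This reduction is legitimate because $\nabla^T$ restricts to the Levi-Civita connection of $g^T_\alpha$ and $R^T$ to its curvature. The one point that must be checked, and which is special to the Sasakian setting, is the Ricci identity for $\nabla^T$ on basic forms: in the frame $\{X_i,X_{\bar j}\}$ of \eqref{A-3-1} one has $[X_i,X_{\bar j}]=-2\i G_{i\bar j}\p_x$, which is purely in the $\xi$-direction, so its projection onto $\cD$ vanishes and, because $\Phi$ is basic, $\p_x$ annihilates every component of $\Phi$. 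Hence $g^{i\bar j}_T[\nabla_i^T,\nabla_{\bar j}^T]$ acting on the components of a basic form carries no torsion correction and equals the action of the transverse curvature operator $R^T(X_i,X_{\bar j})$, precisely as on a K\"ahler manifold.

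Granting this, I would substitute the two formulas from the preceding proposition,
\[
\bar\p\Phi=\nabla_{\bar j}^T\Phi_{A\bar B}\,dz_{\bar j}\wedge dz_A\wedge dz_{\bar B},\qquad \bar\p^{*}\psi_{A\bar B}=-(-1)^{p}g^{i\bar j}_T\nabla_i^T\psi_{A\bar j\bar B},
\]
into $\t_{\bar\p}=\bar\p\bar\p^{*}+\bar\p^{*}\bar\p$ and expand. Applying $\bar\p$ and then $\bar\p^{*}$ contracts the holomorphic derivative $g^{i\bar j}_T\nabla_i^T$ against the antiholomorphic slot $dz_{\bar j}$ created by $\bar\p$; the term in which $\nabla_i^T$ meets exactly that slot produces the rough Laplacian $-g^{i\bar j}_T\nabla_i^T\nabla_{\bar j}^T\Phi_{A\bar B}$, while the terms in which it meets one of the original indices $\bar\beta_k$ survive with a sign $(-1)^k$ from pulling $\bar j$ into the $k$-th position. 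Computing $\bar\p^{*}$ first and then $\bar\p$ produces the same list of surviving terms but with the two derivatives $\nabla_i^T$ and $\nabla_{\bar j}^T$ in the opposite order. Adding the two contributions, the surviving terms combine into the commutators $g^{i\bar j}_T[\nabla_i^T,\nabla_{\bar j}^T]$ applied to $\Phi$ with the index $\bar\beta_k$ replaced by the contracted index $\bar j$, which is exactly \eqref{E-9-i3}.

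Finally, \eqref{E-9-i4} is the special case $p=0$ with the commutators evaluated. By the Ricci identity established in the first paragraph, $g^{i\bar j}_T[\nabla_i^T,\nabla_{\bar j}^T]$ acting on an antiholomorphic index of $\Phi_{\bar B}$ equals a contraction of the transverse curvature tensor $R^T$; this contraction is by definition the transverse Ricci tensor $R^T_{\tau\bar\beta_k}$, and after matching signs one obtains the stated sum $\sum_{k}R^T_{\tau\bar\beta_k}\Phi_{\bar\beta_1\cdots\bar\tau\cdots\bar\beta_q}$. I expect the main obstacle to be purely bookkeeping: keeping the antisymmetrization, the factor $(-1)^{p}$ coming from $\bar\p^{*}$ and the factors $(-1)^k$ from moving the contracted index $\bar j$ into the $k$-th slot consistent between the two orderings, so that the off-diagonal terms assemble into genuine commutators rather than leaving a spurious remainder. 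Once the signs are organized, the passage to the curvature term in \eqref{E-9-i4} is immediate from the Ricci identity.
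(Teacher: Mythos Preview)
Your proposal is correct and follows essentially the same route as the paper: reduce to the local K\"ahler metric $g^T_\alpha$ on $V_\alpha$, plug in the covariant formulas for $\bar\p$ and $\bar\p^{*}$ from the preceding propositions, and invoke the standard Bochner--Kodaira computation (Morrow--Kodaira) to obtain \eqref{E-9-i3}, with \eqref{E-9-i4} following by evaluating the commutators. Your explicit verification that the bracket $[X_i,X_{\bar j}]$ lies in the $\xi$-direction and hence annihilates basic forms---so that no torsion correction appears in the Ricci identity---is a detail the paper leaves implicit in its ``locally it's K\"ahler'' reduction, but the overall argument is the same.
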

\begin{proof}Recall
 $\t_{\bar \p}=\bar \p_B\bar \p^*_B+\bar \p^*_B\bar \p_B$. \eqref{E-9-i3} follows from the direct computation with Proposition 9.1 and 9.2 and \eqref{E-9-i4} is a direct consequence of \eqref{E-9-i3}. Note that  we can consider this problem locally involved with transverse K\"ahler metric $g^T_\alpha$ on $V_\alpha$. By Proposition 9.1 and 9.2, $\bar \p_B, \bar \p^*_B$ can be viewed as the corresponding operator of the K\"ahler metric $g^T_\alpha$ on $V_\alpha$. Hence the computation of $\t_{\bar \p}$ on basic forms has the same local formula as the corresponding formula in the K\"ahler setting, with the K\"ahler metric replaced by the transverse K\"ahler metric (see \cite{MK} Chapter 2, Theorem 6.1 for the K\"ahler setting; note that there is a sign difference in \cite{MK} for $R_{i\bar j}$ with ours).
\end{proof}

\begin{prop}\label{P-9-01}
Let $(M, g)$ be a compact Sasakian manifold of dimension $(2n+1)$ such that $Ric^T\geq 0$ and $Ric^T$ is positive at one point. Then
\[
b_1(M)=b_1^B=0. 
\]
Moreover, we have the following transverse vanishing theorem
\[
H^{q, 0}(\cF_\xi)=H^{0, q}(\cF_\xi)=0, q=1, \cdots, n.
\]
\end{prop}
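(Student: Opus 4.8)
The plan is to prove this by the transverse Bochner technique, using the transverse Hodge theory recalled above together with the Weitzenb\"ock formula \eqref{E-9-i4} already established. By the transverse Hodge theorem every class in $H^{0,q}(\cF_\xi)$ has a representative $\Phi\in\Omega^{0,q}_B$ that is basic harmonic, i.e. $\t_{\bar\p}\Phi=0$. Since $\t_B=2\t_{\bar\p}$ on basic forms, such $\Phi$ is also $d_B$-harmonic, so $d_B\Phi=\delta_B\Phi=0$; splitting $d_B\Phi=\p_B\Phi+\bar\p_B\Phi$ by type forces $\p_B\Phi=\bar\p_B\Phi=0$. The idea is to pair \eqref{E-9-i4} with $\Phi$ in the $L^2$ product \eqref{E-9-i2}, integrate the rough-Laplacian term by parts via Proposition \ref{P-3-i}, and use $Ric^T\ge0$ to force $\Phi\equiv0$.

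Concretely, applying \eqref{A-3-5} with $\phi=\psi=\Phi$ to the term $-g^{i\bar j}_T\nabla^T_i\nabla^T_{\bar j}\Phi_{\bar B}$ in \eqref{E-9-i4}, the identity $\t_{\bar\p}\Phi=0$ integrates to
\begin{equation*}
0=(\t_{\bar\p}\Phi,\Phi)=\int_M g^{i\bar j}_T\nabla^T_{\bar j}\Phi_{\bar B}\,\overline{\nabla^T_{\bar i}\Phi_{\bar D}}\,g^{B\bar D}_T\,dV+\int_M\sum_k R^T_{\tau\bar\beta_k}\Phi_{\bar\beta_1\cdots\bar\tau\cdots\bar\beta_q}\,\overline{\Phi_{\bar D}}\,g^{B\bar D}_T\,dV .
\end{equation*}
Diagonalizing $Ric^T$ in a transverse unitary frame at a point as $R^T_{i\bar j}=\lambda_i\delta_{ij}$, the second integrand becomes $\sum_{B}(\lambda_{\beta_1}+\cdots+\lambda_{\beta_q})|\Phi_{\bar B}|^2\ge0$ because $Ric^T\ge0$, while the first integrand is $\sum_{i,B}|\nabla^T_{\bar i}\Phi_{\bar B}|^2\ge0$. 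Both being nonnegative with zero sum, each vanishes identically: $\nabla^T_{\bar j}\Phi_{\bar B}\equiv0$ and the curvature integrand vanishes pointwise. Taking conjugates, $\psi:=\overline\Phi\in\Omega^{q,0}_B$ satisfies $\nabla^{1,0}\psi=0$; moreover $\bar\p_B\psi=\overline{\p_B\Phi}=0$ means $\psi$ is transverse holomorphic, so $\nabla^{0,1}\psi=0$ as well. Hence $\nabla^T\psi\equiv0$, i.e. $\psi$ is parallel. At the point $p$ where $Ric^T>0$ all $\lambda_i>0$, so the vanishing of the curvature term there forces $\Phi(p)=0=\psi(p)$; a parallel form vanishing at one point vanishes everywhere, giving $\Phi\equiv0$ and $H^{0,q}(\cF_\xi)=0$ for $1\le q\le n$.

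For $H^{q,0}(\cF_\xi)$ I would use conjugation: a basic $d_B$-harmonic representative $\psi\in\Omega^{q,0}_B$ has $\overline\psi\in\Omega^{0,q}_B$ basic harmonic, hence $\overline\psi=0$ by the previous step, so $\psi=0$. The degree-one transverse Hodge decomposition then gives $H^1_B(\cF_\xi)\otimes\C=H^{1,0}(\cF_\xi)\oplus H^{0,1}(\cF_\xi)=0$, so $b_1^B=0$. To pass to $b_1(M)$ I would show that every $g$-harmonic $1$-form $\alpha$ is basic: since $\xi$ is Killing, $L_\xi$ commutes with the Hodge Laplacian, and $L_\xi\alpha=d\,\iota_\xi\alpha$ (using $d\alpha=0$) is both harmonic and exact, hence $0$, so $\iota_\xi\alpha=c$ is constant. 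Then $\alpha-c\eta$ is basic with $d_B(\alpha-c\eta)=-c\,d\eta$, whence $c[d\eta]_B=0$ in $H^2_B(\cF_\xi)$; but $[d\eta]_B\neq0$ since $\eta\wedge(\tfrac12 d\eta)^n$ is the volume form, forcing $c=0$. Thus harmonic $1$-forms are basic and closed, making the natural map $H^1_B(\cF_\xi)\to H^1(M,\R)$ surjective (and injective, as a basic exact $1$-form is $d_B$ of a basic function), so $H^1(M,\R)\cong H^1_B(\cF_\xi)=0$ and $b_1(M)=0$.

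The pairing and the diagonalization of the curvature operator are exactly parallel to the K\"ahler Bochner technique, so I do not expect difficulty there. The step I expect to require the most care is the passage from the Bochner identity to genuine vanishing: one must combine $\nabla^{0,1}\Phi=0$ with the fact that a basic harmonic form is simultaneously $\p_B$- and $\bar\p_B$-closed, so that $\overline\Phi$ is truly $\nabla^T$-parallel, before invoking positivity of $Ric^T$ at a single point. The only nonlocal ingredient is the identification $b_1(M)=b_1^B$, where the nontriviality of the basic class $[d\eta]_B$ is essential; this is the point at which the contact (Sasakian) structure, not merely the transverse K\"ahler one, is used.
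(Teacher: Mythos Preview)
Your argument is correct and uses the same transverse Bochner strategy as the paper, with two differences in execution. First, after pairing \eqref{E-9-i4} with $\Phi$, the paper also rewrites $\t_{\bar\p}\Phi_{\bar B}=-g^{i\bar j}_T\nabla^T_{\bar j}\nabla^T_i\Phi_{\bar B}$ (commuting the covariant derivatives absorbs the Ricci term) and integrates that form separately to obtain $\int_M|\nabla^T_i\Phi_{\bar B}|^2=0$; this gives $\nabla^T\Phi=0$ in one stroke, bypassing your detour through $\overline\Phi$ and the use of $\p_B$-closedness. Second, for $b_1(M)=b_1^B$ the paper simply invokes the known isomorphism $H^1(M,\R)\cong H^1_B(\cF_\xi)$ from \cite{BG}, whereas you supply a self-contained proof via the Killing property of $\xi$ and the nontriviality of $[d\eta]_B$. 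Your route is marginally longer in the Bochner step but more self-contained on the topological side.
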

\begin{proof} The statement $b_1(M)=0$ is proved by Tanno with the assumption  $Ric+2g>0$ (see \cite{Tanno} Theorem 3.4). Given that $Ric^T(X, Y)=Ric(X, Y)+2g(X, Y)$ for $X, Y\in \cD$, $Ric(\xi, \xi)=2n$, $Ric(\xi, X)=0$, $X\in \cD$,  then $Ric^T>0$ is equivalent to $Ric+2g>0$. When $(M, g)$ is assumed to be positive Sasakian structure, Boyer-Galicki-Nakamaye \cite{BGN} proved $b_1=0$, and also proved the vanishing theorem when $(M, g)$ is quasi-regular. 
We would use the transverse Hodge theory to  prove that $H^1_B(\cF_\xi)=0$ and we know that $H^1(M, \R)\approx H^1_B(\cF_\xi)$ (see \cite{BG}, page 215 for example).
Suppose $w=w_{\bar l}dz_{\bar l}$ is a basic harmonic $(0, 1)$ form. Then by Proposition \ref{P-9-1} we have
\[
\t_{\bar \p} w_{\bar l}=-g^{i\bar j}_T \nabla_i^T\nabla_{\bar j}^T w_{\bar l}+R^T_{p\bar l} w_{\bar p}=-g^{i\bar j}_T\nabla_{\bar j}^T \nabla_i^Tw_{\bar l}=0. 
\]
It then follows that
\[
\int_M R^T_{p\bar l} w_{\bar p} \bar w_{\bar l}-g^{i\bar j}_T \nabla_i^T\nabla_{\bar j}^T w_{\bar l}\bar  w_{\bar l}=-\int_M g^{i\bar j}_T\nabla_{\bar j}^T \nabla_i^Tw_{\bar l} \bar w_{\bar l}=0.
\]
Hence
\[
\int_M R^T_{p\bar l} w_{\bar p}\bar  w_{\bar l}+|\nabla^T_{\bar j} w_{\bar l}|^2=\int_M |\nabla^T_i w_{\bar l}|^2= 0. 
\]
This implies that $\nabla^T w_{\bar l}=0$ and $R^T_{p\bar l} w_{\bar p}\bar  w_{\bar l}=0$. Since $Ric^T\geq 0$ and it is positive at some point $p$, it follows that $w_{\bar l}(p)=0$. Hence $w_{\bar l}\equiv 0$. It follows that 
$b^{0, 1}_B=b^{1, 0}_B=0$. Hence $b^1_B=0$. Now we prove  $b^{q, 0}_B=b^{0, q}_B$  $q> 1$ in the similar way. Suppose $\Phi=\Phi_{\bar B}dz_{\bar B}$ is a harmonic $(0, q)$ form.
Then  by Proposition \ref{P-9-1} we have,
\[
\t_{\bar \p}\Phi_{\bar B}=-g^{i\bar j}_T\nabla_i^T\nabla^{T}_{\bar j}\Phi_{\bar B}+\sum_{k}R^T_{\tau \bar\beta_k}\Phi_{\bar \beta_{1}\cdots \bar \beta_{k-1}\bar \tau \bar \beta_{k+1}\cdots \bar \beta_{q}}=-g^{i\bar j}_T\nabla_{\bar j}^T\nabla_{i}^T\Phi_{\bar B}=0. 
\]
It then follows that
\[
\int_M R_{\tau \bar \beta_{k}} \Phi_{\bar \beta_{1}\cdots \bar \beta_{k-1}\bar \tau \bar \beta_{k+1}\cdots \bar \beta_{q}}\overline{\Phi_{\bar \beta_1\cdots \bar \beta_q}}+|\nabla^T_{\bar j}\Phi_{\bar B}|^2=\int_M |\nabla_i^T\Phi_{\bar B}|^2=0
\]
By the positivity  assumption on $Ric^T$, we can get that $\Phi_{\bar B}\equiv 0$. 
\end{proof}

\begin{prop}\label{P-9-4}Let $(M, g)$ be a compact Sasakian manifold of dimension $(2n+1)$ such that the transverse bisectional curvature $R^T\geq 0$ and it is positive at one point. Then
\[
b_2(M)=0, b_2^B(\cF_\xi)=1. 
\]
\end{prop}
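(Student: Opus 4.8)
The plan is to compute $b_2^B(\cF_\xi)$ via transverse Hodge theory and then extract $b_2(M)$ from the basic Gysin sequence of the Reeb foliation. First I would observe that positivity of the transverse bisectional curvature at a point $p$ forces $Ric^T$ to be positive there: tracing the bisectional curvature over one index gives $R^T_{i\bar i}=\sum_k R^T_{i\bar i k\bar k}>0$ at $p$, while $R^T\geq 0$ everywhere yields $Ric^T\geq 0$ globally. Hence Proposition \ref{P-9-01} applies and gives $H^{2,0}(\cF_\xi)=H^{0,2}(\cF_\xi)=0$. Since the transverse Hodge theorem splits a basic harmonic $2$-form into $(2,0)$, $(1,1)$ and $(0,2)$ pieces, each again basic harmonic, it remains to prove that the space $\cH^{1,1}_B$ of basic harmonic $(1,1)$-forms is exactly one-dimensional.

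The bound $\dim\cH^{1,1}_B\geq 1$ is immediate, since the transverse K\"ahler form $\frac12 d\eta$ is basic, closed and (being transverse-parallel) harmonic, and is nonzero in $H^2_B(\cF_\xi)$ because $[\frac12 d\eta]_B^{\,n}\neq 0$ in $H^{2n}_B(\cF_\xi)$ (its integral against $\eta$ is the volume). For the reverse bound I would run the Bochner technique transversely. Let $\phi=\phi_{i\bar j}\,dz_i\wedge dz_{\bar j}$ be a basic harmonic $(1,1)$-form, which we may take to be real, so that $\t_{\bar\p}\phi=0$. Exactly as in the derivation of \eqref{E-9-i3}, commuting $\nabla^T_i$ past $\nabla^T_{\bar\beta}$ produces transverse curvature terms; pairing with $\phi$ and integrating by parts on $M$ (justified by Proposition \ref{P-3-i}) yields an identity of the form
\begin{equation*}
0=\int_M |\nabla^T\phi|^2\,dV+\int_M Q(R^T,\phi)\,dV,
\end{equation*}
where, diagonalizing $\phi$ against $g^T$ with real eigenvalues $\lambda_1,\dots,\lambda_n$, the curvature form $Q$ takes the familiar shape $\sum_{i<j}R^T_{i\bar i j\bar j}(\lambda_i-\lambda_j)^2$. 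Because every computation here involves only the transverse K\"ahler metric $g^T_\alpha$ on $V_\alpha$, this is literally the K\"ahler Bochner formula for harmonic $(1,1)$-forms (see Morrow-Kodaira \cite{MK}), which I would simply transplant.

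Since $R^T_{i\bar i j\bar j}\geq 0$ everywhere, both integrands are nonnegative and hence vanish. Vanishing of the first shows $\phi$ is transverse-parallel, while vanishing of $Q$ together with strict positivity of all $R^T_{i\bar i j\bar j}$ at $p$ forces $\lambda_i=\lambda_j$ there, i.e.\ $\phi_p=c\,g^T_p$. Choosing the constant $c'$ so that $c'\,\frac12 d\eta$ agrees with $\phi$ at $p$, the form $\phi-c'\,\frac12 d\eta$ is transverse-parallel and vanishes at $p$, hence vanishes on the connected $M$; thus $\cH^{1,1}_B=\R\cdot[\frac12 d\eta]$ and $b_2^B(\cF_\xi)=1$. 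Finally, for $b_2(M)$ I would invoke the basic Gysin sequence $H^0_B\xrightarrow{\,\cup[d\eta]_B\,}H^2_B\to H^2(M)\to H^1_B$ (see \cite{BG}): the first map sends $1$ to $[d\eta]_B$, which generates $H^2_B$, so $H^2_B\to H^2(M)$ is the zero map and $H^2(M)$ injects into $H^1_B=0$ (the latter by Proposition \ref{P-9-01}), giving $b_2(M)=0$. The hard part will be pinning down the precise transverse Bochner identity and verifying that its curvature term is genuinely the bisectional-curvature quadratic form $Q$; the type decomposition and the foliation exact sequence are then routine.
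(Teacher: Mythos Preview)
Your proposal is correct and follows essentially the same route as the paper: both reduce to $b^{1,1}_B=1$ via the transverse Bochner formula for harmonic $(1,1)$-forms (obtaining the quadratic term $\sum_{i<j}R^T_{i\bar i j\bar j}(\lambda_i-\lambda_j)^2$ from Proposition~\ref{P-9-1}), conclude $\psi$ is transverse-parallel and a multiple of $d\eta$, and then read off $b_2(M)$ from the same foliation exact sequence. The paper carries out precisely the curvature computation you flag as the ``hard part,'' so your plan matches it step for step.
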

\begin{proof}First we have $b_1(M)=b^B_1=0$. Note that we have the following exact sequence (see \cite{BG}, page 215 for example)
\[
0=H^1(M, \R)\rightarrow \R\rightarrow H^{2}_B(\cF_\xi)\rightarrow H^2(M, \R)\rightarrow H^1_B(\cF_\xi)=0.
\]
It follows that $b_2(M)=b^B_2-1$. Now we shall use the transverse Hodge theory to show that $b_2^B=1$ if the transverse bisectional curvature is positive. By Proposition \ref{P-9-01}, we have $b^{2, 0}_B=b^{0, 2}_B=0$. Now we prove $b^{1, 1}_B=1$ (see \cite{GK} for the K\"ahler setting). Let $\psi=\psi_{k\bar l} dz_k\wedge dz_{\bar l}$ be a basic harmonic $(1, 1)$ form. Then by Proposition \ref{P-9-1} we have
\[
\begin{split}
\t_{\bar \p} \psi_{k\bar l}&=-g^{i\bar j}_T \nabla^T_{i}\nabla_{\bar j}^T \psi_{k\bar l}-g^{i\bar j}_TR^T_{i\bar q k\bar l}\psi_{q\bar j}+R^T_{p\bar l}\psi_{k\bar p}\\
&=-g^{i\bar j}_T\nabla^T_{\bar j}\nabla^T_i \psi_{k\bar l}-g^{i\bar j}_TR^T_{i\bar q k\bar l}\psi_{q\bar j}+2R^T_{p\bar l}\phi_{k\bar p}-R^T_{k\bar p}\psi_{p\bar l}
\end{split}
\]
Choose an orthonormal frame such that $\psi_{k\bar l}=\delta_{kl}\psi_{k\bar k}$, then we get
\[
\left(-g^{i\bar j}_TR^T_{i\bar q k\bar l}\psi_{q\bar j}+R^T_{p\bar l}\psi_{k\bar p}\right) \overline{\psi_{k\bar l}}=\sum_{i< j} R_{i\bar i j\bar j}^T (\psi_{i\bar i}-\psi_{j\bar j})^2\geq 0,
\]
and 
\[
\left(-g^{i\bar j}_TR^T_{i\bar q k\bar l}\psi_{q\bar j}+2R^T_{p\bar l}\phi_{k\bar p}-R^T_{k\bar p}\psi_{p\bar l}\right) \overline{\psi_{k\bar l}}=\sum_{i< j} R_{i\bar i j\bar j}^T (\psi_{i\bar i}-\psi_{j\bar j})^2\geq 0.
\]
By the positivity assumption on $R^T_{i\bar j k \bar l}$, it then follows that $\nabla^T\psi\equiv 0$ and $\psi_{i\bar i}=\psi_{j\bar j}$ at some point $p$  for all $i, j$. Hence $\psi=\lambda d\eta$ for some constant $\lambda$. Hence
$b^{1, 1}_B=1$. 
\end{proof}

If  $(M, g)$ is Sasaki-Einstein and $(M, g)$ has positive transverse bisectional curvature, then $(M, g)$ has to be a space form with positive curvature $1$. 

\begin{prop}\label{P-9-5}
If $(M, g)$ is a Sasaki-Einstein metric such that the transverse K\"ahler structure has positive holomorphic bisectional curvature, then $(M, g)$ has constant sectional curvature $1$.  
\end{prop}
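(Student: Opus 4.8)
The plan is to reduce the statement to a rigidity theorem for the transverse K\"ahler structure and then transfer constant curvature back to $g$ through \eqref{E-2-curvature}. First I would record that the Sasaki-Einstein condition $Ric=2ng$ together with \eqref{E-5} gives, on $\cD$,
\[
Ric^T=Ric+2g^T=(2n+2)g^T,
\]
so the transverse K\"ahler structure is transverse K\"ahler-Einstein with positive Einstein constant $2(n+1)$; equivalently each local K\"ahler metric $g^T_\alpha$ on $V_\alpha$ is K\"ahler-Einstein with $Ric^T_\alpha=2(n+1)g^T_\alpha$ and, by hypothesis, has positive holomorphic bisectional curvature.

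The heart of the argument is to prove that $g^T$ has constant transverse holomorphic sectional curvature. This is the transverse analogue of the theorem of Goldberg-Kobayashi \cite{GK}: a compact K\"ahler-Einstein manifold with positive holomorphic bisectional curvature has constant holomorphic sectional curvature. I expect this to be the main obstacle, since in the irregular case the leaf space is not even an orbifold and the theorem cannot be quoted on a base. My preferred route is to run the Goldberg-Kobayashi Bochner/maximum-principle computation directly on the compact manifold $M$: the quantities $R^T$, $\nabla^T R^T$ and $Ric^T$ are all basic, the transverse second Bianchi identity and the transverse Einstein equation hold on each chart $V_\alpha$, and the transverse Laplacian $\t^T$ together with the integration-by-parts formula of Proposition \ref{P-3-i} and the Weitzenb\"ock-type identities of Proposition \ref{P-9-1} supply exactly the compact global framework the K\"ahler proof uses. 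One checks that positivity of the bisectional curvature (which also forces $b_2^B=1$ as in Proposition \ref{P-9-4}) makes the Goldberg-Kobayashi argument go through verbatim with $\t$ replaced by $\t^T$ and integration taken over $M$, yielding constant transverse holomorphic sectional curvature $c$.

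Once constancy is known, contracting the complex-space-form curvature tensor gives $Ric^T=\tfrac{(n+1)c}{2}g^T$, so comparison with $Ric^T=2(n+1)g^T$ forces $c=4$, exactly the holomorphic sectional curvature of the Fubini-Study metric underlying the round sphere $S^{2n+1}$. It then remains to transfer this to $g$. Substituting the constant-holomorphic-sectional-curvature form of $R^T$ into \eqref{E-2-curvature}, and using $R(X,\xi)Y=g(\xi,Y)X-g(X,Y)\xi$ from Proposition \ref{P-2-1}(3) for planes containing $\xi$, a direct computation with the identity $\Phi^2=-I+\eta\otimes\xi$ shows that the $\Phi$-dependent terms recombine so that $R$ becomes the curvature tensor of constant sectional curvature $1$ on all of $TM$. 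The only genuinely new input beyond the K\"ahler case is the transverse Bochner step; everything else is bookkeeping with \eqref{E-2-curvature} and the Sasakian structure equations.
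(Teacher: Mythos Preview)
Your proposal is correct but takes a different route for the key rigidity step. The paper mentions Berger and Goldberg--Kobayashi but then explicitly follows Chen--Tian's maximum principle argument (Lemma~8.19 in \cite{Chen-Tian}): one works with an auxiliary function measuring the spread of the transverse holomorphic sectional curvatures, locates its maximum on the compact $M$, and then carries out all curvature computations pointwise on a local patch $V_\alpha$; no global integration is ever invoked. You instead propose to run the original Goldberg--Kobayashi integral Bochner argument in the transverse setting, using Proposition~\ref{P-3-i} for integration by parts and Proposition~\ref{P-9-1} for the Weitzenb\"ock identities. The Chen--Tian route is arguably cleaner here precisely for the reason the paper stresses: compactness of $M$ is only needed to find the extremum, and after that everything is purely local on the transverse K\"ahler chart, so irregularity of the foliation and the basic integration-by-parts machinery never enter. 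Your route also works, since that machinery has already been set up earlier in the paper, and it has the minor advantage of appealing directly to the classical reference \cite{GK}. The final transfer from constant transverse bisectional curvature to $\mathrm{Sec}(g)\equiv 1$ via \eqref{E-2-curvature} and Proposition~\ref{P-2-1}(3) is essentially the same in both approaches.
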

\begin{proof}

When $(M, g)$ is a K\"ahler manifold such that the bisectional curvature is positive and $g$ is K\"ahler-Einstein, then Berger \cite{Berger} and Goldberg-Kobayashi \cite{GK} proved that $g$ has constant bisectional curvature. 
In the Sasakian setting, if  $(M, g)$ is Sasaki-Einstein with positive sectional curvature, then $(M, g)$ has constant sectional curvature $1$ (c.f Moskal \cite{Moskal} and \cite{BG} Chapter 11). 

Here we follow Chen-Tian's argument  \cite{Chen-Tian} in K\"ahler setting  using the maximum principle to prove that the transverse K\"ahler metric $g^T$ has constant transverse bisectional curvature. 
 One  can easily get that the sectional curvature of $g$ is $1$ for a Sasaki-Einstein metric if  its transverse K\"ahler metric $g^T$ has constant transverse bisectional curvature,  
\[
R_{i\bar j k\bar l}^T=2(g_{i\bar j}^Tg_{k\bar l}^T+g_{i\bar l}^Tg_{k\bar j}^T). 
\]

Suppose that $g$ is a Sasaki-Einstein metric and  $g^T$ is transverse K\"ahler Einstein with positive transverse bisectional curvature. Then we can apply the argument of the maximum principle as 
in  \cite{Chen-Tian} (Lemma 8.19) to the Sasakian case to show  that $g^T$ has constant transverse bisectional curvature. The compactness of $M$ is only needed to find a maximum (or minimum)  of  auxiliary functions. 
After that we can do all  computations locally  on some $V_\alpha$.  Only  the transverse K\"ahler structure is involved.  Hence the argument in \cite{Chen-Tian} is applicable and the details to the Sasakian case can be carried out as in Proposition \ref{P-8-4}. We shall skip the details. 
It is clear that if we only assume that the transverse bisectional curvature is nonnegative and it is positive at one point, we can get the same conclusion. 

\end{proof}

With the result in \cite{Zhang}, one can actually prove 
\begin{prop}If $(M, g)$ is a Sasakian metric with constant scalar curvature and positive transverse bisectional curvature, then  $(M, g)$ has constant transverse bisectional curvature. In particular, there is a $D$-homothetic transformation such that $\tilde g=\alpha g+\alpha(\alpha-1)\eta\otimes \eta$ has constant sectional curvature $1$ for some $\alpha>0$. 
\end{prop}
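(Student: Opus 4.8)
The plan is to reduce to the Sasaki--Einstein case already settled in Proposition \ref{P-9-5} and then transport the conclusion back through a $D$-homothetic deformation. First I would record that positivity of the transverse bisectional curvature forces positivity of the transverse Ricci curvature: tracing $R^T_{i\bar j k\bar l}$ gives $Ric^T_{i\bar j}=g^{k\bar l}_T R^T_{i\bar j k\bar l}>0$, so $Ric^T>0$ and in particular $c_1^B>0$. Thus we are in the positive basic first Chern class case, which is exactly the regime analyzed in \cite{Zhang}. The essential input is the result of \cite{Zhang}, which I would invoke to conclude that a Sasakian metric with \emph{constant} transverse scalar curvature and positive transverse bisectional curvature is in fact transverse K\"ahler--Einstein, i.e. $Ric^T=\l g^T$ for some constant $\l$; by the Ricci positivity just noted, $\l>0$.

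Next I would normalize this $\eta$-Einstein structure to a genuine Sasaki--Einstein one by a $D$-homothetic deformation $\tilde\eta=\alpha\eta$, $\tilde\xi=\alpha^{-1}\xi$, $\tilde\Phi=\Phi$, $\tilde g=\alpha g+\alpha(\alpha-1)\eta\otimes\eta$. Under such a deformation the transverse K\"ahler metric rescales, $\tilde g^T=\alpha g^T$, while the transverse Ricci form $\rho^T=-\i\p\bar\p\log\det g^T$ is unchanged, since $\log\det\tilde g^T=\log\det g^T+n\log\alpha$ and $-\i\p\bar\p$ annihilates the constant. Hence $\widetilde{Ric}^T=Ric^T=\l g^T=(\l/\alpha)\tilde g^T$, and choosing $\alpha=\l/(2n+2)$ gives $\widetilde{Ric}^T=(2n+2)\tilde g^T$, which by \eqref{E-5} is equivalent to $\widetilde{Ric}=2n\tilde g$, i.e. $\tilde g$ is Sasaki--Einstein. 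Since $\alpha>0$ and $\tilde g^T=\alpha g^T$, the transverse bisectional curvature of $\tilde g$ is still positive.

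Finally I would apply Proposition \ref{P-9-5} to $(M,\tilde g)$: being Sasaki--Einstein with positive transverse bisectional curvature, it has constant sectional curvature $1$, which is exactly the ``in particular'' assertion, and it yields the algebraic identity $\tilde R^T_{i\bar j k\bar l}=2(\tilde g^T_{i\bar j}\tilde g^T_{k\bar l}+\tilde g^T_{i\bar l}\tilde g^T_{k\bar j})$, i.e. constant transverse bisectional curvature for $\tilde g$. Undoing the homothety, the curvature tensor of $g^T=\alpha^{-1}\tilde g^T$ satisfies the same identity (up to the overall rescaling by $\alpha$), so $(M,g)$ itself has constant transverse bisectional curvature. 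The main obstacle is the reduction in the second paragraph, namely showing that constant transverse scalar curvature together with positive transverse bisectional curvature forces the transverse K\"ahler--Einstein condition; everything afterward is bookkeeping with the $D$-homothety and a direct appeal to Proposition \ref{P-9-5}. This reduction is precisely the content supplied by \cite{Zhang}, whose transverse curvature estimates play the role of the corresponding K\"ahler pinching and rigidity results.
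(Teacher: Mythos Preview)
Your proposal is correct and follows essentially the same strategy as the paper: invoke \cite{Zhang} to obtain the transverse K\"ahler--Einstein condition, apply a $D$-homothetic deformation to reach a Sasaki--Einstein metric, and then appeal to Proposition~\ref{P-9-5}. The one substantive addition in the paper is that it supplies a short self-contained alternative to citing \cite{Zhang} for the key reduction: positive transverse bisectional curvature gives $b_2^B(\cF_\xi)=1$ by Proposition~\ref{P-9-4}, and constancy of $R^T$ forces the transverse Ricci form $\rho^T$ to be basic-harmonic via $\i\bar\p^{*}\rho^T=-[\Lambda,\p]\rho^T=\p R^T=0$, so $\rho^T$ must be a constant multiple of $d\eta$, i.e.\ $g^T$ is transverse K\"ahler--Einstein.
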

\begin{proof}
In \cite{Zhang}, X. Zhang proved that $(M, g)$ is transverse K\"ahler-Einstein. We can actually give a simple proof of this fact as follows. Since $g$ has positive transverse bisectional curvature, by Proposition \ref{P-9-4}, $b_2^B(\cF_\xi)=1$. If the transverse scalar curvature $R^T$ is constant, 
we have
\[
\i \bar \p^{*} \rho^T=-[\Lambda, \p]\rho^T=\p (\Lambda \rho^T)=\p R^T=0,
\]
hence the transverse Ricci form $\rho^T$ is a harmonic  form, and it is a basic $(1, 1)$ form. If $b_2^B(\cF_\xi)=1$, then $\rho^T= R^T d\eta/2$, hence it is a transverse K\"ahler metric with constant transverse Ricci curvature. As in Proposition \ref{P-9-5}, one can further show that $g^T$ has constant transverse bisectional curvature. Since $R^T>0$ and $g^T$ is transverse K\"ahler-Einstein, it is well known that after a $D$-homothetic transformation such that $\tilde g=\alpha g+\alpha(\alpha-1)\eta\otimes \eta$ foe some $\alpha>0$, $\tilde g$ is Sasakian-Einstein. Since $\tilde g^T$ is just a rescaling of $g^T$, it has constant transverse bisectional curvature. Hence by Proposition \ref{P-9-5}, $\tilde g$ has constant sectional curvature $1$. It is clear that if we only assume that the transverse bisectional curvature is nonnegative and it is positive at one point, we can get the same conclusion. 
 \end{proof}

 We then discuss the minimizers of $\cW$ functional as in \eqref{E-3-22}. We finish the proof of  Theorem \ref{E-4-R}. 
 \begin{proof}The existence of a nonnegative minimizer is almost identical as in \cite{Rothaus}, Section 1.   To prove actually $w_0$ satisfies \eqref{E-w-1}, we follow \cite{Rothaus80, Rothaus} with a slight modification. 
We consider the function $L(\epsilon)$, as in \cite{Rothaus80}, for any $u\in W^{1, 2}_B$ fixed, 
\[
L(\epsilon)=L(w_0+\epsilon u)=\log \int_M (w_0+\epsilon u)^2 \tau^{-n}+\left(\int_M (w_0+\epsilon u)^2\tau^{-n}\right)^{-1} \cW(g, w_0+\epsilon u, \tau).
\]
Then it is straightforward to check that  $L(\epsilon )$ has a minimum at $\epsilon=0$. Taking derivative of $L(\epsilon)$ at $\epsilon=0$, then we get, for any $u\in W^{1, 2}_B$, 
\begin{equation}\label{E-w-2}
\int_M (w_0\log w_0^2-R^T w_0+\mu(g, \tau) w_0) u-4\langle \nabla w_0, \nabla u\rangle=0.
\end{equation}
If we assume $w_0$ is smooth (hence $w_0\in C^\infty_B$), then we get 
\[
\int_M (4\t w_0+w_0\log w_0^2-R^T w_0+\mu(g, \tau) w_0) u=0
\]
for any $u\in W^{1, 2}_B$, in particular, for any $u\in C^\infty_B$. 
Note that if $v\in C^\infty_B$ such that $\int_M uv=0$ for any $u\in C^\infty_B$, then $v=0$. 
Hence if $w_0$ is smooth,  then it follows that $w_0$ satisfies \eqref{E-w-1}. If $w_0$ is not smooth, we consider the equation
\[
\t h=1/4(w_0\log w_0^2-R^T w_0+\mu(g, \tau) w_0):=A_0.
\]
It is clear that there exists a unique solution (up to addition of a constant) since we can get, by taking $u=1$ in \eqref{E-w-2}, 
\[
\int_M w_0\log w_0^2-R^T w_0+\mu(g, \tau) w_0=0. 
\]
By an approximation argument, it is clear  that $h\in W^{1, 2}_B$ since $w_0\in W^{1, 2}_B$ and $R^T\in C^\infty_B$. 
Actually let $w_k\rightarrow w_0$ in $W^{1, 2}$ such that $w_k\in C^\infty_B$, then we would get a sequence of solution of $h_k\in C^\infty_B$ such that
\[
\t h_k=1/4(w_k\log w_k^2-R^T w_k+\mu(g, \tau) w_k):=A_k.
\]
By Sobolev inequality, we can assume that $A_k\rightarrow A_0$ in $L^p$, $p=2(2n+1)/(2n-1)-\epsilon$, for any small $\epsilon>0$. We can assume, for each $k$, 
\[
\int_M h=\int_M h_k=0. 
\]
 Then apply the standard elliptic regularity theory to $\t (h-h_k)=A_0-A_k$, we know that 
 \[
 \|h-h_k\|_{W^{2, p}}\leq C\|A_0-A_k\|_{L^p}. 
 \]
 In particular, $h_k\rightarrow h$ in $W^{2, p}$, hence $h\in W^{1, 2}_B$. Now using \eqref{E-w-2} again, we can get that for any $u\in C^\infty_B$, 
\[
\int_M (h-w_0)\t u=0. 
\]
We claim that $h-w_0$ is a constant. Let $u_k\in C^\infty_B$ be a sequence of smooth function such that $u_k\rightarrow h-w_0$ in $W^{1, 2}$. 
Then we get that
\[\int_M (h-w_0)\t u_k=0.\]
It then follows 
\[
\int_M\langle \nabla (h-w_0), \nabla u_k\rangle=0. 
\]
Hence we have, letting $k\rightarrow \infty$, 
\[
\int_M |\nabla (h-w_0)|^2=0. 
\]
Hence $h-w_0$ is a constant and $w_0$ satisfies
\begin{equation}\label{E-w-3}
4\t w_0=w_0\log w_0^2-R^T w_0+\mu(g, \tau) w_0.
\end{equation}
It then follows  the well-know De Giogi-Nash theory that $w_0\in L^\infty$, and then by  $L^p$ theory, we can get the further regularity $w_0\in W^{2, p}$ for any $p>1$. Furthermore, once we show that $w_0$ actually satisfies 
\eqref{E-w-3} almost everywhere, we can apply Lemma on page 114 (\cite{Rothaus}) to get that  $w_0>0$. Rothaus only stated this result  in \cite{Rothaus} for minimizers of $\eqref{E-3-22}$ in $W^{1, 2}$; but his proof only requires the equation \eqref{E-w-3}. Hence we can get $w_0>0$, and then it follows that $w_0$ is smooth. 
\end{proof}

\begin{rmk}In the present paper, we actually do not really need the fact that $w_0$ is positive and smooth. 
\end{rmk}

\bibliographystyle{amsplain}

\end{document}